\titleformat{\section}[block]
 {\bfseries}
 {\thesection.}
 {\fontdimen2\font}
 {}
\newcommand{\periodafter}[1]{#1.}
\titleformat{\subsection}[runin]
 {\bfseries}
 {\thesubsection.}
 {\fontdimen2\font}
 {\periodafter}
\setlist{noitemsep}
\newtheorem{theorem}{Theorem}[section]
\newtheorem{corollary}[theorem]{Corollary}
\newtheorem{proposition}[theorem]{Proposition}
\newtheorem{lemma}[theorem]{Lemma}
\DeclareMathOperator{\N}{\mathbb{N}}
\DeclareMathOperator{\R}{\mathbb{R}}
\DeclareMathOperator{\uhr}{\upharpoonright}
\DeclareMathOperator{\J}{\mathbb{J}}
\DeclareMathOperator{\ord}{Ord}
\renewcommand{\emptyset}{\varnothing}
\numberwithin{equation}{section}
\begin{document}

\author{Valentin Gutev}

\address{Department of Mathematics, Faculty of Science, University of
     Malta, Msida MSD 2080, Malta}

\email{\href{mailto:valentin.gutev@um.edu.mt}{valentin.gutev@um.edu.mt}}

\subjclass[2010]{54B20, 54C30, 54C60, 54C65, 54D05, 54D15}

\keywords{Insertion, intermediate set-valued mapping, set-valued
  selection, hyperspace topology, $\tau$-paracompactness,
  $\tau$-collectionwise normality}

\title{Insertion of Continuous Set-Valued Mappings}
\begin{abstract}
  An interesting result about the existence of ``intermediate''
  set-valued mappings between pairs of such mappings was obtained by
  Nepomnyashchii. His construction was for a paracompact domain, and
  he remarked that his result is similar to Dowker's insertion theorem
  and may represent a generalisation of this theorem. In the present
  paper, we characterise the $\tau$-paracompact normal spaces by this
  set-valued ``insertion'' property and for $\tau=\omega$, i.e.\ for
  countably paracompact normal spaces, we show that it is indeed
  equivalent to the mentioned Dowker's theorem. Moreover, we obtain a
  similar result for $\tau$-collectionwise normal spaces and show that
  for normal spaces, i.e.\ for $\omega$-collectionwise normal spaces,
  our result is equivalent to the Kat\v{e}tov-Tong insertion
  theorem. Several related results are obtained as well.
\end{abstract}

\dedicatory{Dedicated to the memory of Mitrofan Choban}

\date{\today}
%\date{}
\maketitle

\section{Introduction}

All spaces in this paper are infinite Hausdorff topological
spaces. For a set $Y$, we use $2^Y$ to denote the family of all
nonempty subsets of $Y$. For a space $Y$, let
\[
  \mathscr{F}(Y)=\left\{S\in 2^Y:S\ \text{is closed}\right\}.
\]
The following subfamilies of $\mathscr{F}(Y)$ will play an important
role in this paper:
\[
  \mathscr{C}(Y)=\{S\in \mathscr{F}(Y): S\ \text{is
    compact}\}\quad\text{and}\quad 
  \mathscr{C}'(Y)=\mathscr{C}(Y)\cup\{Y\}.
\]
Moreover, we will use the subscript ``$\mathbf{c}$'' to denote the
connected members of any one of the above families; namely,
$\mathscr{F}_\mathbf{c}(Y)$ for the connected members of
$\mathscr{F}(Y)$; $\mathscr{C}_\mathbf{c}(Y)$ for those of
$\mathscr{C}(Y)$; and
$\mathscr{C}_\mathbf{c}'(Y)=\mathscr{C}_\mathbf{c}(Y)\cup \{Y\}$
provided that $Y$ is itself connected.  Let us remark that in the
setting of a topological vector space $Y$, some authors have used the
subscript ``\textbf{c}'' in a more restricted sense, namely for the
convex members of any of the above families. In this paper, convex
sets will not be used explicitly, hence it will not cause any
misunderstandings. \medskip

For spaces $X$ and $Y$, a set-valued mapping $\Phi:X\to 2^Y$ is
\emph{lower semi-continuous}, or \emph{l.s.c.}, if the set
$\Phi^{-1}(U)=\{x\in X: \Phi(x)\cap U\neq \emptyset\}$ is open in $X$
for every open $U\subseteq Y$; and $\Phi$ is \emph{upper
  semi-continuous}, or \emph{u.s.c.}, if $\Phi^{-1}(F)$ is closed in
$X$ for every closed $F\subseteq Y$. Equivalently, $\Phi$ is u.s.c.\
if
\[
\Phi^\#(U)=X\setminus \Phi^{-1}(Y\setminus U)= \big\{x\in X:
\Phi(x)\subseteq U\big\}
\]
is open in $X$, for every open $U\subseteq Y$.  A set-valued mapping
is \emph{continuous} if it is both l.s.c.\ and u.s.c. A mapping
$\theta:X\to 2^Y$ is a \emph{selection} (or, a \emph{set-valued
  selection}) for $\Phi:X\to 2^Y$ if $\theta(x)\subseteq\Phi(x)$ for
all $x\in X$. To designate that $\theta$ is a selection for $\Phi$, we
will often simply write $\theta\leqslant\Phi$. Moreover, to bring the
analogy closer to usual functions, we will also write
$\theta<\Phi$ provided that $\theta\leqslant \Phi$ and
$\Phi(x)\setminus \theta(x)\neq \emptyset$, for all $x\in X$.  In
\cite{nepomnyashchii:86}, Nepomnyashchii showed that if $X$ is a
paracompact space, $(Y,\rho)$ is a complete metric space,
$\Phi:X\to \mathscr{F}_\mathbf{c}(Y)$ is l.s.c.\ such that
$\{\Phi(x):x\in X\}$ is uniformly equi-$LC^0$ and
$\theta:X\to \mathscr{C}(Y)$ is u.s.c.\ with $\theta\leqslant\Phi$,
then there exists a continuous mapping
$\varphi:X\to \mathscr{C}_\mathbf{c}(Y)$ such
that~$\theta\leqslant \varphi\leqslant\Phi$. For the definition of
``uniformly equi-$LC^0$'', see the next section.\medskip

In this paper, we will show that such intermediate set-valued mappings
can characterise paracompactness and collectionwise normality. To
state our results, we briefly recall some terminology.  For an
infinite cardinal number $\tau$, a space $X$ is
\emph{$\tau$-paracompact} \cite{zbMATH03161751,morita:60} if each open
cover of $X$ of cardinality $\leq\tau$, has a locally finite open
refinement. If $\omega$ is the first infinite ordinal, then
$\omega$-paracompactness is nothing else but countable
paracompactness. By definition, a space $X$ is paracompact if it is
$\tau$-paracompact for any cardinal number $\tau$. Paracompactness of
$X$ implies normality of $X$. However, there are many examples of
countably paracompact spaces which are not normal. Also, there are
simple examples of $\tau$-paracompact spaces which are not
$\tau^+$-paracompact \cite{morita:60}, where the cardinal $\tau^+$ is
the immediate successor of $\tau$.  Finally, for a set $\mathscr{A}$,
we will use $\J(\mathscr{A})$ to denote the \emph{metrizable hedgehog
  of spininess} $|\mathscr{A}|$ obtained from $\mathscr{A}$ (see
\cite[Example 4.1.5]{engelking:89}). Recall that the point set of
$\J(\mathscr{A})$ is $\{0\}\cup (0,1]\times \mathscr{A}$, while the
metric $d$ on $\J(\mathscr{A})$ is defined by
\begin{equation}
  \label{eq:choban-sel-v7:1}
  d(0,\langle s,\alpha\rangle)=s\quad \text{and}\quad 
  d (\langle s,\alpha\rangle, \langle t,\beta\rangle) =
  \begin{cases}
    |s-t|,  &\text{if }\alpha=\beta, \\
    s+t, &\text{if }\alpha \ne \beta.
  \end{cases}
\end{equation}
Throughout this paper, $\J(\mathscr{A})$ will be always endowed with
this metric. The following theorem will be proved.

\begin{theorem}
  \label{theorem-choban-sel-v6:1}
  For an infinite cardinal number $\tau$ and a space $X$, the
  following conditions are equivalent\textup{:}
  \begin{enumerate}
  \item\label{item:choban-sel-v6:1} $X$ is normal and
    $\tau$-paracompact. 
  \item \label{item:choban-sel-v6:2} If
    $\Phi:X\to \mathscr{F}_\mathbf{c}(\J(\tau))$ is l.s.c.,
    $\theta:X\to \mathscr{C}(\J(\tau))$ is u.s.c.\ and $\theta<\Phi$,
    then there exists a continuous mapping
    ${\varphi:X\to \mathscr{C}_\mathbf{c}(\J(\tau))}$ such that
    $\theta<\varphi\leqslant \Phi$.
  \end{enumerate}
\end{theorem}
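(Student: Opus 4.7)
The plan is to prove the two implications separately. For (\ref{item:choban-sel-v6:1})$\Rightarrow$(\ref{item:choban-sel-v6:2}) I would adapt the Nepomnyashchii construction to the target $\J(\tau)$ while replacing full paracompactness by the hypothesized $\tau$-paracompactness plus normality; for (\ref{item:choban-sel-v6:2})$\Rightarrow$(\ref{item:choban-sel-v6:1}) I would extract normality and $\tau$-paracompactness by two unrelated applications of the insertion property, encoding either a pair of disjoint closed sets or an open cover of size $\leq\tau$ inside the spines of $\J(\tau)$. For convenience I write $S_\alpha=\{0\}\cup(0,1]\times\{\alpha\}$ for the $\alpha$-th spine.

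For the forward direction, the two structural features I would exploit are that $\J(\tau)$ is a complete metric space and that $\mathscr{F}_\mathbf{c}(\J(\tau))$ is uniformly equi-$LC^0$, because small balls in $\J(\tau)$ are arcwise connected through arcs lying in a single spine or in a pair of spines joined at $0$. The decisive quantitative observation is that $\J(\tau)$ has weight exactly $\tau$, so any base of $\J(\tau)$ pulls back through $\Phi^{-1}$ to an open cover of $X$ of cardinality $\leq\tau$, to which $\tau$-paracompactness can be applied. I would then mimic the Nepomnyashchii inductive construction on this cover: refine it locally finitely, shrink by normality, and over each member of the shrinking enlarge $\theta(x)$ by adjoining a short arc inside $\Phi(x)$ reaching a chosen witness $p(x)\in\Phi(x)\setminus\theta(x)$. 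Patching these local enlargements with a partition-of-unity style glueing yields the desired $\varphi\colon X\to\mathscr{C}_\mathbf{c}(\J(\tau))$ with $\theta\leq\varphi\leq\Phi$. The main obstacle is preserving the strict inequality $\theta<\varphi$: one must choose the witnesses and arcs so that some point of $\Phi(x)$ persists outside the patched $\varphi(x)$ rather than being swallowed by the compact values at nearby points, and this is the technical heart of the construction.

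For the reverse direction I first recover normality. Given disjoint closed $F_0,F_1\subseteq X$, distinguish two spines $\alpha_0,\alpha_1$, and set $\theta\equiv\{0\}$, $\Phi(x)=S_{\alpha_0}$ on $F_0$, $\Phi(x)=S_{\alpha_1}$ on $F_1$, and $\Phi(x)=S_{\alpha_0}\cup S_{\alpha_1}$ otherwise. A direct check shows $\Phi$ is l.s.c., connected-closed-valued, and $\theta<\Phi$. The intermediate $\varphi$ then contains $0$, is compact connected and strictly larger than $\{0\}$; composing with the continuous $h\colon\J(\tau)\to\R$ that is signed distance to $0$ along $S_{\alpha_0}\cup S_{\alpha_1}$ and zero on the remaining spines, the map $x\mapsto\max h(\varphi(x))+\min h(\varphi(x))$ is continuous, strictly positive on $F_0$ and strictly negative on $F_1$. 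For $\tau$-paracompactness, identify the index set $\mathscr{A}$ of an open cover $\{U_\alpha\}$ with the spines and take $\theta\equiv\{0\}$, $\Phi(x)=\bigcup\{S_\alpha:x\in U_\alpha\}$, which is l.s.c.\ and connected-closed-valued. The intermediate $\varphi$ then meets each spine in an interval $[0,r_\alpha(x)]\times\{\alpha\}$ with $r_\alpha(x)>0$ forcing $x\in U_\alpha$; the sets $V_\alpha^n=\{x:r_\alpha(x)>1/n\}$ are open by l.s.c.\ of $\varphi$, contained in $U_\alpha$, cover $X$ since $\theta<\varphi$, and for each fixed $n$ form a locally finite family by combining u.s.c.\ of $\varphi$ with compactness of $\varphi(x)$. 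The main obstacle here is that the resulting refinement $\bigcup_n\{V_\alpha^n:\alpha\in\mathscr{A}\}$ is only $\sigma$-locally finite of size $\leq\tau$; upgrading to genuine local finiteness relies on a classical Michael-type result ensuring that under normality every $\sigma$-locally finite open refinement of a $\tau$-sized cover admits a locally finite open refinement.
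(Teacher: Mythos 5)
Your overall architecture matches the paper's --- reduce the forward direction to a Nepomnyashchii-type insertion, and recover $\tau$-paracompactness from the hedgehog-valued test pair $\theta\equiv\{0\}$, $\Phi(x)=\J(\mathscr{U}_x)$ --- but both halves have a genuine gap. In the forward direction you correctly identify the strict inequality $\theta<\varphi$ as the crux, but you leave it unresolved, and your description of what must be arranged is inverted: $\theta<\varphi$ asks that $\varphi(x)\setminus\theta(x)\neq\emptyset$ at every point, not that some point of $\Phi(x)$ survive outside $\varphi(x)$. The paper's resolution is a two-step reduction that your sketch is missing: first insert a \emph{u.s.c.} mapping $\psi$ with $\theta<\psi\leqslant\Phi$ (Proposition \ref{proposition-choban-sel-v6:1}, built from a locally finite closed shrinking of the sets $V_B=\Phi^{-1}(B)\cap\theta^{\#}\left(Y\setminus\overline{B}\right)$ together with Choban--Michael u.s.c.\ compact-valued selections of $x\mapsto\overline{\Phi(x)\cap B}$), and only then run the \emph{non-strict} continuous insertion for the pair $(\psi,\Phi)$; any continuous $\varphi$ with $\psi\leqslant\varphi\leqslant\Phi$ is then automatically strictly above $\theta$. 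Without some such device, a partition-of-unity patching gives no control on $\varphi(x)\setminus\theta(x)$.

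In the reverse direction the final step is false as stated: it is not true that in a normal space every open cover of size $\leq\tau$ admitting a $\sigma$-locally finite open refinement admits a locally finite one --- any Dowker space is a counterexample already for $\tau=\omega$, since a countable open cover is trivially $\sigma$-locally finite. The upgrade from $\sigma$-locally finite to locally finite requires \emph{countable paracompactness} in addition to normality, and you never establish countable paracompactness. The paper obtains it before touching the cardinal $\tau$: since $\J(2)\subseteq\J(\tau)$, hypothesis \ref{item:choban-sel-v6:2} restricted to the two-spine hedgehog, identified with $[-1,1]$, yields the strict Dowker insertion property for real-valued semicontinuous functions (Lemma \ref{lemma-choban-sel-v12:1}), hence countable paracompactness and normality; after that, your $\sigma$-locally finite refinement $\bigcup_n\{V_\alpha^n\}$ does finish the proof via \cite[Theorem 1.1]{morita:60}. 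Your separate two-spine argument for normality is essentially correct but becomes redundant once this reduction to Dowker's theorem is in place.
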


Theorem \ref{theorem-choban-sel-v6:1} is not only similar, but also
represents a natural generalisation of Dowker's insertion theorem
\cite{dowker:51}, see Lemma \ref{lemma-choban-sel-v12:1}. Moreover, it
is still valid in the setting of not necessarily connected-valued
mappings, in which case the intermediate mapping $\varphi$ in
\ref{item:choban-sel-v6:2} is only supposed to be u.s.c., see
Proposition~\ref{proposition-choban-sel-v8:1}.\medskip

If $X$ is normal and $\tau$-paracompact,
$\Phi:X\to \mathscr{F}_\mathbf{c}(\J(\tau))$ is l.s.c.,
$\theta:X\to \mathscr{C}(\J(\tau))$ is u.s.c.\ and
$\theta\leqslant\Phi$, then there also exists a continuous mapping
$\varphi:X\to \mathscr{C}_\mathbf{c}(\J(\tau))$ such that
$\theta\leqslant\varphi\leqslant\Phi$, see Theorem
\ref{theorem-choban-sel-v17:1} and Proposition
\ref{proposition-Icsvm-vgg-rev:1}.  In this regard, the author would
like to know, but doesn't, whether this relaxed insertion property may
still characterise the $\tau$-paracompact spaces. The answer is
``Yes'' in the special case of $\tau=\omega$ (Corollary
\ref{corollary-choban-sel-v10:1}). Moreover, this property implies
both countable paracompactness and $\tau$-collectionwise normality,
see Theorem \ref{theorem-choban-sel-v6:2} below.\medskip

For an infinite cardinal number $\tau$, a space $X$ is called
\emph{$\tau$-collectionwise normal} if every discrete family
$\mathscr{F}$ of closed subsets of $X$, with
$|\mathscr{F}| \leq \tau$, admits a discrete family
$\{U_F : F \in \mathscr{F}\}$ of open subsets of $X$ such that
$F \subseteq U_F$ for each $F \in \mathscr{F}$. If, in this
definition, ``discrete'' is changed to ``locally finite'', we get
another important class of spaces. Namely, a space $X$ is called
\emph{$\tau$-expandable} \cite{krajewski:71} if every locally finite
family $\mathscr{F}$ of closed subsets of $X$, with
$|\mathscr{F}| \leq \tau$, admits a locally finite family
$\{ U_F : F \in \mathscr{F}\}$ of open subsets of $X$ such that
$F \subseteq U_F$ for each $F \in \mathscr{F}$. A space $X$ is
collectionwise normal if it $\tau$-collectionwise normal for every
$\tau$, and $X$ is expandable if it is $\tau$-expandable for
every $\tau$. A space $X$ is normal if and only if it is
$\omega$-collectionwise normal. However, for every $\tau$ there exists
a $\tau$-collectionwise normal space which is not
$\tau^+$-collectionwise normal \cite{przymusinski:78a}. Similarly, a
space $X$ is $\omega$-expandable precisely when it is countably
paracompact \cite[Theorem 2.5]{krajewski:71}. Finally, let us
explicitly remark that a normal space $X$ is $\tau$-expandable if and
only if it is countably paracompact and $\tau$-collectionwise normal
\cite{dowker:56,katetov:58}. Our next result deals with the following
characterisation of $\tau$-expandable spaces.

\begin{theorem}
  \label{theorem-choban-sel-v6:2}
  For an infinite cardinal number $\tau$ and a space $X$, the
  following conditions are equivalent\textup{:}
  \begin{enumerate}
  \item\label{item:choban-sel-v6:3} $X$ is normal and
    $\tau$-expandable.
  \item\label{item:choban-sel-v6:4} If
    $\Phi:X\to \mathscr{C}'_\mathbf{c}(\J(\tau))$ is l.s.c.,
    $\theta:X\to \mathscr{C}(\J(\tau))$ is u.s.c.\ and
    $\theta\leqslant \Phi$, then there exists a continuous mapping
    ${\varphi:X\to \mathscr{C}_\mathbf{c}(\J(\tau))}$ such that
    $\theta\leqslant \varphi\leqslant\Phi$.
  \end{enumerate}
\end{theorem}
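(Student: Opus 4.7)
The plan is to prove both implications. For (b) $\Rightarrow$ (a) I will encode the combinatorial content of normality and $\tau$-expandability into suitable pairs $(\theta,\Phi)$ using the hedgehog structure of $\J(\tau)$, and read off the required open sets from the continuous selection $\varphi$ given by (b). For (a) $\Rightarrow$ (b) I will invoke the equivalence (recalled in the excerpt) that a normal space is $\tau$-expandable if and only if it is simultaneously countably paracompact and $\tau$-collectionwise normal, and then construct $\varphi$ via a hedgehog ``thickening'' of $\theta$ combined with the insertion machinery already developed in the paper.

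For (b) $\Rightarrow$ (a): To establish $\tau$-expandability, given a locally finite closed family $\{F_\alpha:\alpha<\tau\}$, I would take
\[
\theta(x)=\{0\}\cup\{\langle 1,\alpha\rangle:x\in F_\alpha\},
\]
so that local finiteness makes $\theta$ u.s.c.\ and compact-valued. Paired with an l.s.c.\ mapping $\Phi:X\to\mathscr{C}'_\mathbf{c}(\J(\tau))$ chosen to confine $\varphi(x)$ to the relevant spines on $\bigcup_\alpha F_\alpha$ (and equal to $\J(\tau)$ elsewhere), the preimages $U_\alpha=\{x:\varphi(x)\cap V_\alpha\neq\emptyset\}$, for small open tip neighbourhoods $V_\alpha$ of $\langle 1,\alpha\rangle$, deliver the required locally finite open expansion; the l.s.c.\ check for $\Phi$ reduces to closedness of unions of subfamilies of $\{F_\alpha\}$, which local finiteness provides. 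Normality is obtained analogously from the two-element case by using a $\Phi$ that restricts $\varphi$ to a single spine on each of two disjoint closed sets.

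For (a) $\Rightarrow$ (b): The natural candidate is the ``tree thickening''
\[
\psi(x)=\{0\}\cup\bigcup\{[0,s]\times\{\alpha\}:\langle s,\alpha\rangle\in\theta(x)\},
\]
which is compact and connected (compactness of $\theta(x)$ bounds how many spines reach far from $0$), contains $\theta(x)$, and automatically lies inside $\Phi(x)$ whenever $\Phi(x)=\J(\tau)$. The main work is to upgrade $\psi$ to a continuous mapping respecting $\Phi$; spine-by-spine, the u.s.c.\ height profile $h_\alpha(x)=\sup\{s:\langle s,\alpha\rangle\in\theta(x)\}$ admits, by Dowker's insertion theorem, a continuous majorant bounded by an l.s.c.\ upper bound extracted from $\Phi$, and $\tau$-collectionwise normality is then used to assemble these spine-wise choices locally finitely. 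The main obstacle is the exceptional set $E=\{x:0\notin\Phi(x)\}$, on which $\Phi(x)$ is compact connected within a single spine and the tree construction cannot be used; here I would apply Proposition \ref{proposition-Icsvm-vgg-rev:1} on an open neighbourhood of $E$ in which $\Phi$ is compact-valued, and patch to the thickening on $X\setminus E$ using the full strength of $\tau$-expandability.
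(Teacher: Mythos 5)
Your proof of \ref{item:choban-sel-v6:4}$\implies$\ref{item:choban-sel-v6:3} is essentially the paper's: encode a locally finite closed family by the tips $\langle 1,F\rangle$ of the hedgehog and read off the expansion from $U_F=\varphi^{-1}(V_F)$ for the discrete family of tip-balls $V_F=\mathbf{O}_{1/2}^d(\langle 1,F\rangle)$. However, your auxiliary $\Phi$, ``confined to the relevant spines on $\bigcup_\alpha F_\alpha$ and equal to $\J(\tau)$ elsewhere'', is \emph{not} l.s.c.: for $X=\R$, $F_1=[0,1]$, $F_2=[1,2]$ and $U$ a small ball about $\langle 1,F_1\rangle$, one gets $\Phi^{-1}(U)=F_1\cup\big(X\setminus(F_1\cup F_2)\big)=(-\infty,1]\cup(2,\infty)$, which is not open; the relevant sets are \emph{differences} of the $F_\alpha$, not unions, and these are not closed. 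The confinement is also unnecessary: take $\Phi\equiv\J(\mathscr{F})$ constant; then $F\subseteq U_F$ follows from $\theta\leqslant\varphi$, and local finiteness of $\{U_F\}$ follows from $\varphi$ being compact-valued and u.s.c.\ against the discrete family $\{V_F\}$. Your two-spine argument for normality is sound (it is the one the paper uses in Theorem \ref{theorem-choban-sel-v6:3}; for the present theorem the paper instead quotes Lemma \ref{lemma-choban-sel-v12:2}).

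The direction \ref{item:choban-sel-v6:3}$\implies$\ref{item:choban-sel-v6:4} is where the genuine gaps lie, and your route differs substantially from the paper's (which factors $\theta$ through a metrizable space via the locally finite lifting property and then applies Theorems \ref{theorem-Icsvm-vgg-rev:1} and \ref{theorem-choban-sel-v17:1}). First, assembling the spine-wise insertions $h_\alpha\leq f_\alpha\leq g_\alpha$ into a single continuous compact-valued $\varphi$ is the crux of the entire theorem and is left unaddressed: you must arrange that for every $\varepsilon>0$ the family $\{x:f_\alpha(x)\geq\varepsilon\}$, $\alpha<\tau$, is locally finite, compatibly for all $\varepsilon$ at once, so that $\varphi(x)=\{0\}\cup\bigcup_\alpha (0,f_\alpha(x)]\times\{\alpha\}$ is compact and u.s.c.; this is exactly what $\tau$-expandability must deliver, and ``use $\tau$-collectionwise normality to assemble these choices locally finitely'' is not an argument. (Also, with $\leq$ rather than $<$ the correct tool is Kat\v{e}tov--Tong, not Dowker.) Second, the treatment of $E=\{x:0\notin\Phi(x)\}$ fails as stated: $E$ is in general only an $F_\sigma$, there need be no open neighbourhood of $E$ on which $\Phi$ is compact-valued, and Proposition \ref{proposition-Icsvm-vgg-rev:1} cannot be applied on such a neighbourhood because it requires its domain to be normal and $\tau$-paracompact --- precisely the hypothesis Theorem \ref{theorem-choban-sel-v6:2} is designed to avoid, since normal $\tau$-expandable spaces need not be $\tau$-paracompact and open subspaces need not inherit either property. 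The paper resolves this dichotomy inside Theorem \ref{theorem-Icsvm-vgg-rev:1}: the compact-valued regime is handled by Proposition \ref{proposition-Icsvm-vgg-rev:2} (which needs only $\tau$-PF-normality), and the two regimes are glued at the level of the s.l.s.f.\ condition on covers, not at the level of already-constructed continuous selections.
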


For a space $Y$ and $k\in\N$, let
$\mathscr{C}_k(Y)=\left\{S\in 2^Y: |S|\leq k\right\}\subseteq
\mathscr{C}(Y)$. Regarding the role of countable paracompactness as a
component of $\tau$-expandable spaces, we will also prove the
following theorem which is complementary to Theorem
\ref{theorem-choban-sel-v6:2}.

\begin{theorem}
  \label{theorem-choban-sel-v6:3}
  For an infinite cardinal number $\tau$ and a space $X$, the
  following conditions are equivalent\textup{:}
  \begin{enumerate}
  \item\label{item:choban-sel-v6:5} $X$ is 
    $\tau$-collectionwise normal. 
  \item\label{item:choban-sel-v6:6} If
    $\Phi:X\to \mathscr{C}'_\mathbf{c}(\J(\tau))$ is l.s.c., $k\in\N$
    and $\theta:X\to \mathscr{C}_k(\J(\tau))$ is u.s.c.\ with
    $\theta\leqslant\Phi$, then there exists a continuous mapping
    ${\varphi:X\to \mathscr{C}_\mathbf{c}(\J(\tau))}$ such that
    $\theta\leqslant \varphi\leqslant\Phi$.
  \end{enumerate}
\end{theorem}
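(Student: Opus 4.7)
The plan is to prove both implications of the theorem.

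For $(\ref{item:choban-sel-v6:5})\Rightarrow(\ref{item:choban-sel-v6:6})$, I would adapt the construction of $\varphi$ from the proof of Theorem~\ref{theorem-choban-sel-v6:2}, exploiting the uniform bound $|\theta(x)|\leq k$ to replace $\tau$-expandability with $\tau$-collectionwise normality. The reason is that the auxiliary expanding covers handled in Theorem~\ref{theorem-choban-sel-v6:2} by countable paracompactness have, in the present setting, at most $k$ (hence finitely many) terms, so only the discrete-family refinements supplied by $\tau$-collectionwise normality are needed. Concretely, for each $\alpha<\tau$ one constructs a continuous branch-height function $r_\alpha:X\to[0,1]$ that dominates $\sup\{t:\langle t,\alpha\rangle\in\theta(x)\}$ and is dominated by the corresponding supremum for $\Phi(x)$, and defines $\varphi(x)=\{0\}\cup\bigcup_{\alpha<\tau}[0,\langle r_\alpha(x),\alpha\rangle]$; the finite-branch structure of $\theta$ together with the control on $\Phi$ keeps $\varphi(x)$ compact and connected.

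For $(\ref{item:choban-sel-v6:6})\Rightarrow(\ref{item:choban-sel-v6:5})$, let $\{F_\alpha:\alpha<\tau\}$ be a discrete family of closed subsets of $X$ and put $A=\bigcup_\alpha F_\alpha$. Define $\theta:X\to\mathscr{C}_2(\J(\tau))$ by $\theta(x)=\{0,\langle 1,\alpha\rangle\}$ for $x\in F_\alpha$ and $\theta(x)=\{0\}$ for $x\notin A$, and $\Phi:X\to\mathscr{C}'_\mathbf{c}(\J(\tau))$ by $\Phi(x)=[0,\langle 1,\alpha\rangle]$ for $x\in F_\alpha$ and $\Phi(x)=\J(\tau)$ for $x\notin A$. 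Since the discreteness of $\{F_\alpha\}$ makes every subunion $\bigcup\{F_\alpha:\alpha\in J\}$ closed in $X$, a direct check gives $\theta$ u.s.c., $\Phi$ l.s.c., and $\theta\leqslant\Phi$. Applying $(\ref{item:choban-sel-v6:6})$ with $k=2$ yields a continuous $\varphi:X\to\mathscr{C}_\mathbf{c}(\J(\tau))$ with $\theta\leqslant\varphi\leqslant\Phi$, and on each $F_\alpha$ the chain $\{0,\langle 1,\alpha\rangle\}\subseteq\varphi(x)\subseteq[0,\langle 1,\alpha\rangle]$ combined with connectivity of $\varphi(x)$ forces $\varphi(x)=[0,\langle 1,\alpha\rangle]$.

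Set $V_\alpha=\{\langle t,\alpha\rangle:t>1/2\}$ and $U_\alpha=\varphi^{-1}(V_\alpha)$, an open set containing $F_\alpha$. The main obstacle is to upgrade $\{U_\alpha\}$ to a genuinely discrete family. For this I would analyse $B=\{y\in X\setminus A:\varphi(y)\text{ meets at least two distinct }V_\alpha\text{'s}\}$, which is open in $X$ by l.s.c.\ of $\varphi$, and show $\overline{B}\cap A=\emptyset$: for $x\in F_\alpha$, u.s.c.\ of $\varphi$ together with the observation that $[0,\langle 1,\alpha\rangle]\cup B(0,1/4)$ is an open neighbourhood of $\varphi(x)=[0,\langle 1,\alpha\rangle]$ in $\J(\tau)$ disjoint from every $V_\beta$ with $\beta\neq\alpha$ produces an $X$-neighbourhood of $x$ on which $\varphi(y)$ meets at most one $V_\alpha$, hence disjoint from $B$. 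Using the normality of $X$ (the case $\tau=\omega$ of $(\ref{item:choban-sel-v6:6})$ applied to two-element discrete families) one separates $A$ from $\overline{B}$ by disjoint open sets $W\supseteq A$ and $W'\supseteq\overline{B}$, and puts $U_\alpha'=U_\alpha\cap W$. These $U_\alpha'$ are pairwise disjoint, since any point in $U_\alpha'\cap U_\beta'$ with $\alpha\neq\beta$ must lie in $B\subseteq W'$, contradicting $W\cap W'=\emptyset$. A final shrinking via normality produces open $V_\alpha'\supseteq F_\alpha$ with $\overline{V_\alpha'}\subseteq U_\alpha'$, and the pairwise disjointness of the $U_\alpha'$ then renders the closures $\overline{V_\alpha'}$ pairwise disjoint, so that $\{V_\alpha'\}$ is the desired discrete open expansion of $\{F_\alpha\}$.
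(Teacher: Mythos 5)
Your forward implication is where the serious gap lies. The explicit formula $\varphi(x)=\{0\}\cup\bigcup_{\alpha<\tau}[0,\langle r_\alpha(x),\alpha\rangle]$ does not in general satisfy $\varphi\leqslant\Phi$: a member of $\mathscr{C}'_\mathbf{c}(\J(\tau))$ may be a compact connected subset of a single spine which misses $0$ (for instance $\Phi(x)=[1/2,1]\times\{\alpha\}$), and even when $0\in\Phi(x)$ the initial segments $[0,\langle r_\alpha(x),\alpha\rangle]$ need not be contained in $\Phi(x)$. More importantly, you give no argument that the values of $\varphi$ are compact (which requires $\{\alpha: r_\alpha(x)\geq\varepsilon\}$ to be finite for every $\varepsilon>0$) or that $\varphi$ is continuous for the Hausdorff distance; these are precisely the points where $\tau$-collectionwise normality must enter, and the remark that ``only discrete-family refinements are needed'' is an assertion, not a proof --- pointwise Kat\v{e}tov--Tong insertion on each spine gives no uniform control across the $\tau$ spines. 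The paper's route is quite different: it shows that a u.s.c.\ mapping $\theta:X\to\mathscr{C}_k(\J(\tau))$ on a $\tau$-collectionwise normal space has the locally finite lifting property (using that $\J(\tau)$ is finite-dimensional), factors $\theta$ through a metrizable space, and then applies Theorem \ref{theorem-Icsvm-vgg-rev:1} and Theorem \ref{theorem-choban-sel-v17:1}; your sketch supplies no substitute for this machinery.

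Your converse is essentially sound and is genuinely different from the paper's (which expands a locally finite closed cover of order $\leq k$ and invokes Kat\v{e}tov's theorem on extending locally finite coverings, rather than treating a discrete family directly); the verification that $\theta$ is u.s.c., $\Phi$ is l.s.c., that $\varphi(x)=[0,\langle1,\alpha\rangle]$ on $F_\alpha$, and that $\overline B\cap A=\emptyset$ are all correct. Two steps need repair. First, obtaining normality by ``applying \ref{item:choban-sel-v6:6} to two-element discrete families'' is circular as stated, since your passage from the $U_\alpha$ to a disjoint expansion itself separates $A$ from $\overline B$ by normality; a non-circular argument is available and close to what you already have --- for two disjoint closed sets take $W_i=\varphi^{-1}(V_i)\cap\varphi^{\#}(G_i)$, where $G_i$ is the open set $[0,\langle1,i\rangle]\cup\mathbf{O}^d_{1/4}(0)$ you introduced, and note $G_1\cap V_2=\emptyset=G_2\cap V_1$ (this is in effect the paper's normality argument, which separates the two values of $\varphi$ in the metrizable hyperspace). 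Second, the last step is false as written: open sets with pairwise disjoint closures need not form a discrete family (disjoint closed intervals accumulating at a point of the line already defeat this). The correct upgrade from a pairwise disjoint open expansion $\{U'_\alpha\}$ to a discrete one uses normality globally: choose open $V$ with $\bigcup_\alpha F_\alpha\subseteq V\subseteq\overline V\subseteq\bigcup_\alpha U'_\alpha$ and set $V'_\alpha=U'_\alpha\cap V$; then any point outside $\overline V$ has a neighbourhood missing every $V'_\alpha$, while any point of $\overline V$ lies in a unique $U'_\beta$, which meets only $V'_\beta$.
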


Regarding the role of the metrizable hedgehog, let us remark that the
set-valued insertion property in Theorems
\ref{theorem-choban-sel-v6:1}, \ref{theorem-choban-sel-v6:2} and
\ref{theorem-choban-sel-v6:3} remains valid if $\J(\tau)$ is replaced
by any (connected) complete metric space $(Y, \rho)$ of topological
weight $w(Y)\leq \tau$, and it is required that the family
$\{\Phi(x): x\in X\}$ is uniformly equi-$LC^0$. Furthermore, one can
only require that $Y$ is completely metrizable and
$\{\Phi(x): x\in X\}$ is equi-``locally connected'' in the sense of
Nepomnyashchii \cite{nepomnyashchii:85,nepomnyashchii:86}. Finally,
let us also remark that some of these results we announced in
\cite{gutev-con:90}.\medskip

The paper is organised as follows. In the next section, we use a
general construction
``$\theta\leqslant\Phi\to \mathscr{C}[\theta,\Phi]$'' assigning a
set-valued mapping $\mathscr{C}[\theta,\Phi]:X\to 2^{\mathscr{C}(Y)}$
in the hyperspace $\mathscr{C}(Y)$ corresponding to a pair of mappings
${\theta,\Phi:X\to 2^Y}$ with $\theta\leqslant\Phi$, see
\eqref{eq:choban-sel-v8:1} and \eqref{eq:choban-sel:1}.  In this
setting, each selection $\Psi:X\to 2^{\mathscr{C}(Y)}$ for
$\mathscr{C}[\theta,\Phi]$ can be transformed into an intermediate
mapping $\theta\leqslant\bigcup\Psi\leqslant\Phi$ by taking the union
of the point-images of $\Psi$. Furthermore, the operation
``$\Psi\to \bigcup\Psi$'' preserves properties of semi-continuity. The
prototype of this construction can be found in \cite{gutev:00b}, it is
also implicitly present in \cite{nepomnyashchii:86}. Based on this
construction, we obtain a general result for the existence of
intermediate continuous mappings, see Theorem
\ref{theorem-choban-sel-v17:1}. In fact, this result is a consequence
of a previous result of the author \cite[Theorem 7.1]{gutev:98}, and
works for set-valued mappings whose domain is a Tychonoff
space. However, in contrast to Theorems \ref{theorem-choban-sel-v6:1},
\ref{theorem-choban-sel-v6:2} and \ref{theorem-choban-sel-v6:3}, it
deals with the so called ``metric''-strongly lower semi-factorizable
mappings (abbreviated ``metric''-s.l.s.f.) rather than l.s.c.\
mappings. Section \ref{sec:interm-select-parac} contains the proof of
Theorem \ref{theorem-choban-sel-v6:1}, which is now obtained as a
consequence of the general result in Theorem
\ref{theorem-choban-sel-v17:1}. An interesting element in this proof
is the special case of countably paracompact normal spaces, Lemma
\ref{lemma-choban-sel-v12:1}, which is shown to be equivalent to
Dowker's insertion theorem \cite{dowker:51}. This is subsequently used
in the proof of the general case of $\tau$-paracompact normal
spaces. The section also contains several other related
observations. Theorems \ref{theorem-choban-sel-v6:2} and
\ref{theorem-choban-sel-v6:3} are obtained in a similar way.  Section
\ref{sec:exampl-strongly-lowe} contains the essential preparation to
apply Theorem \ref{theorem-choban-sel-v17:1} in the proofs of these
theorems, see Theorem \ref{theorem-Icsvm-vgg-rev:1}. Finally, Theorems
\ref{theorem-choban-sel-v6:2} and \ref{theorem-choban-sel-v6:3} are
proved in Section \ref{sec:interm-select-coll}. Just like before, an
interesting element in these proofs is a special case --- that of
normal spaces, see Lemma \ref{lemma-choban-sel-v12:2}. It is now
equivalent to the Kat\v{e}tov-Tong insertion theorem, see
\cite{katetov:51,MR0060211,tong:48,MR0050265}. Several related results
are obtained as well.

\section{Continuous Intermediate Mappings}

In this section, $(Y,\rho)$ is a fixed metric space. For
$\varepsilon>0$, we will use $\mathbf{O}_\varepsilon^\rho(p)$ for the
\emph{open $\varepsilon$-ball} centred at a point $p\in Y$; and
$\mathbf{O}_\varepsilon^\rho(S)=\bigcup_{p\in
  S}\mathbf{O}_\varepsilon^\rho(p)$, whenever $S\subseteq Y$.  In what
follows, we will consider the set $\mathscr{C}(Y)$ as a topological
space equipped with the \emph{Hausdorff topology}, i.e.\ the topology
generated by the \emph{Hausdorff distance} $H(\rho)$ on $Y$ associated
to $\rho$. Recall that $H(\rho)$ is defined by
\[
  H(\rho)(S,T)=\inf\big\{\varepsilon>0: S\subseteq
  \mathbf{O}_\varepsilon^\rho(T)\ \text{and}\ T\subseteq
  \mathbf{O}_\varepsilon^\rho(S)\big\},\quad S,T\in \mathscr{C}(Y).
\]
A mapping $\varphi:X\to \mathscr{F}(Y)$ is continuous precisely when
it is continuous with respect to the \emph{Vietoris topology} on
$\mathscr{F}(Y)$, see \cite[Corollary 9.3]{MR0042109}.  However, on
the hyperspace $\mathscr{C}(Y)$, the Hausdorff topology coincides with
the Vietoris one, see \cite[Theorem 3.3]{MR0042109}.  Thus, a mapping
$\varphi:X\to \mathscr{C}(Y)$ is continuous precisely when it is
continuous with respect to the Hausdorff distance $H(\rho)$ on
$\mathscr{C}(Y)$.\medskip

Motivated by the so called s.f.s.c.\ mappings in \cite{gutev:87a}, a
mapping $\Phi:X\to 2^Y$ was said to be \emph{lower semi-factorizable}
with respect to $\rho$, or \emph{$\rho$-l.s.f.}, \cite{gutev:00a} if
for every closed $F\subseteq X$, every $\varepsilon >0$ and every (not
necessarily continuous) single-valued selection $s:F\to Y$ for
$\Phi\uhr F$, there exists a locally finite cozero-set (in $F$)
covering $\mathscr{U}$ of $F$ and a map $\pi:\mathscr{U}\to F$ such
that $|\mathscr{U}|\leq w(Y)$ and
\[
  s(\pi(U))\in
  \mathbf{O}^\rho_\varepsilon\left(\Phi(x)\right),\quad\text{for every
    $x\in U\in \mathscr{U}$}.
\]
The lower semi-factorizable mappings were very successful in
\cite{gutev:00a} to obtain several selection theorems from a common
point of view. Regarding set-valued continuous selections, the
following refined version of these mappings was introduced in
\cite{gutev:98}. A mapping $\Phi:X\to 2^Y$ is \emph{strongly lower
  semi-factorizable} with respect to $\rho$, or
\emph{$\rho$-s.l.s.f.}, \cite{gutev:98} if for every closed
$F\subseteq X$, every $\varepsilon >0$, and every selection
$\sigma:F\to \mathscr{C}(Y)$ for $\Phi\uhr F$, there exists a locally
finite cozero-set (in $F$) covering $\mathscr{U}$ of $F$ and a map
$\pi :\mathscr{U}\to F$ such that $|\mathscr{U}|\leq w(Y)$ and
\[
  \sigma(\pi(U))\subseteq \mathbf{O}_\varepsilon^\rho(\Phi(x)),\quad
  \text{for every $x\in U\in \mathscr{U}$.}
\]
For the proper understanding of these mappings, let us point out the
following relationship between them, it was obtained in
\cite[Corollary 2.2]{gutev:98}.

\begin{proposition}[\cite{gutev:98}] 
  \label{proposition-choban-sel-v13:3}
  For a space $X$, a metric space $(Y,\rho)$ and $\Psi:X\to 2^Y$,
  define a mapping $\mathscr{C}[\Psi]:X\to 2^{\mathscr{C}(Y)}$ by
  \begin{equation}
    \label{eq:choban-sel-v13:1}
    \mathscr{C}[\Psi](x)=\{S\in \mathscr{C}(Y): S\subseteq
    \Psi(x)\},\quad \text{for every $x\in X$.}
  \end{equation}
  Then $\Psi$ is $\rho$-s.l.s.f.\ if and only if\/ $\mathscr{C}[\Psi]$
  is $H(\rho)$-l.s.f. 
\end{proposition}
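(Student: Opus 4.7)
The plan is to show that the two conditions are just different encodings of the same statement about the same class of objects, with the only nontrivial ingredient being the Hausdorff approximation of a compact set by a finite one. The first step is a trivial alignment: by \eqref{eq:choban-sel-v13:1}, a single-valued selection $s:F\to \mathscr{C}(Y)$ for $\mathscr{C}[\Psi]\uhr F$ is literally the same data as a compact-valued selection $\sigma:=s$ for $\Psi\uhr F$ in the sense used in the definition of $\rho$-s.l.s.f. Since $(Y,\rho)$ is metric, $w(\mathscr{C}(Y))=w(Y)$, so the cardinality constraint on $\mathscr{U}$ is the same on both sides. Hence both statements quantify over the same maps and ask for a cover $\mathscr{U}$ with a map $\pi$ of the same size; only the ``$\varepsilon$-closeness'' clause needs to be translated.

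For the implication ``$\mathscr{C}[\Psi]$ is $H(\rho)$-l.s.f.\ $\Rightarrow$ $\Psi$ is $\rho$-s.l.s.f.'', I would take $\sigma$ and $\varepsilon>0$ as in the $\rho$-s.l.s.f.\ definition, view $\sigma$ as a single-valued selection for $\mathscr{C}[\Psi]\uhr F$, and apply the hypothesis with the same $\varepsilon$. The conclusion $\sigma(\pi(U))\in \mathbf{O}_\varepsilon^{H(\rho)}(\mathscr{C}[\Psi](x))$ furnishes some $T\in \mathscr{C}[\Psi](x)$ with $H(\rho)(\sigma(\pi(U)),T)<\varepsilon$, which at once gives $\sigma(\pi(U))\subseteq \mathbf{O}_\varepsilon^\rho(T)\subseteq \mathbf{O}_\varepsilon^\rho(\Psi(x))$. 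This half is essentially definitional.

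The converse is the slightly more substantive half. Given $s$ and $\varepsilon$ as in the $H(\rho)$-l.s.f.\ definition of $\mathscr{C}[\Psi]$, I would apply the $\rho$-s.l.s.f.\ hypothesis to $s$ with parameter $\varepsilon/3$, obtaining $\mathscr{U}$ and $\pi$ such that $s(\pi(U))\subseteq \mathbf{O}_{\varepsilon/3}^\rho(\Psi(x))$ for every $x\in U\in \mathscr{U}$. Fixing such $x,U$, compactness of $s(\pi(U))$ yields a finite $(\varepsilon/3)$-net $\{p_1,\dots,p_n\}\subseteq s(\pi(U))$, and the inclusion lets me lift each $p_i$ to some $q_i\in\Psi(x)$ with $\rho(p_i,q_i)<\varepsilon/3$. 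The finite set $T=\{q_1,\dots,q_n\}$ lies in $\mathscr{C}[\Psi](x)$, and a standard triangle-inequality check gives $H(\rho)(s(\pi(U)),T)<2\varepsilon/3<\varepsilon$, that is, $s(\pi(U))\in \mathbf{O}_\varepsilon^{H(\rho)}(\mathscr{C}[\Psi](x))$.

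I do not expect a real obstacle. The one piece of genuine content is the finite-net approximation remark --- a compact subset of $\mathbf{O}_\delta^\rho(A)$ admits arbitrarily Hausdorff-close finite subsets of $A$ --- and the only care required is to split $\varepsilon$ so that the two triangle estimates close simultaneously.
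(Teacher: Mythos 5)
Your argument is correct. Note, however, that the paper does not prove this proposition at all --- it is quoted verbatim from \cite[Corollary 2.2]{gutev:98} --- so there is no in-paper proof to compare against; what you have written is a self-contained verification of the cited result. Your two reductions are sound: the identification of single-valued selections for $\mathscr{C}[\Psi]\uhr F$ with compact-valued selections for $\Psi\uhr F$ is exactly right, the equality $w(\mathscr{C}(Y))=w(Y)$ (via density of finite subsets of a dense set of $Y$, plus the isometric embedding of $Y$ as singletons) correctly disposes of the cardinality clause, the easy direction is indeed definitional since $T\subseteq\Psi(x)$ gives $\mathbf{O}_\varepsilon^\rho(T)\subseteq\mathbf{O}_\varepsilon^\rho(\Psi(x))$, and the $\varepsilon/3$-net construction in the converse closes with $H(\rho)\bigl(s(\pi(U)),T\bigr)\leq 2\varepsilon/3<\varepsilon$ as claimed. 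One phrasing quibble: your closing remark that a compact subset of $\mathbf{O}_\delta^\rho(A)$ admits ``arbitrarily Hausdorff-close'' finite subsets of $A$ overstates what is true (the approximation cannot beat $\delta$ in general); but the estimate you actually use in the body of the proof is the correct one, namely $H(\rho)(K,T)\leq\delta+\eta$ for an $\eta$-net, and that is all the argument needs.
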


Here, we will show that one of the main results of \cite{gutev:98}
implies the existence of continuous intermediate mappings between
special pairs of set-valued mappings.  To this end, following
\cite{gutev:00b}, for $\Phi:X\to \mathscr{F}(Y)$ and
$\theta:X\to \mathscr{C}(Y)$ with $\theta\leqslant \Phi$, we will
associate the mappings
$\mathscr{C}[\theta,\Phi],\mathscr{C}_\mathbf{c}[\theta,\Phi]: X\to
2^{\mathscr{C}(Y)}\cup\{\emptyset\}$ defined by
\begin{align}
  \label{eq:choban-sel-v8:1}
  \mathscr{C}[\theta,\Phi](x)
  &=\left\{S\in
    \mathscr{C}(Y): \theta(x)\subseteq 
    S\subseteq \Phi(x)\right\}\quad\text{and}\\ 
  \label{eq:choban-sel:1}
  \mathscr{C}_\mathbf{c}[\theta,\Phi](x)
  & =\left\{S\in
    \mathscr{C}_\mathbf{c}(Y): \theta(x)\subseteq 
    S\subseteq \Phi(x)\right\},\quad
    x\in X.   
\end{align}
Evidently,
$\mathscr{C}[\theta,\Phi]:X\to \mathscr{F}(\mathscr{C}(Y))$. If the
point-images of $\Phi$ are connected and locally path-connected, then
we also have that
$\mathscr{C}_\mathbf{c}[\theta,\Phi]:X\to
\mathscr{F}_\mathbf{c}(\mathscr{C}_\mathbf{c}(Y))$. Indeed, in this
case, it follows from \cite[Lemma 1.3]{curtis:80} that each element of
$\mathscr{C}[\theta,\Phi](x)$ is contained in some element of
$\mathscr{C}_\mathbf{c}[\theta,\Phi](x)$. Therefore,
$\mathscr{C}_\mathbf{c}[\theta,\Phi](x)$ is also nonempty. Since
$\mathscr{C}_\mathbf{c}[\theta,\Phi](x)$ is closed, it is a
\emph{growth hyperspace} in the sense of Curtis
\cite{curtis:78,curtis:80}. Hence, by a result of Kelley \cite[Lemma
2.3]{zbMATH03102237}, see also \cite[Lemma 1.1]{curtis:80},
$\mathscr{C}_\mathbf{c}[\theta,\Phi](x)$ is path-connected as well,
i.e.\
$\mathscr{C}_\mathbf{c}[\theta,\Phi]:X\to
\mathscr{F}_\mathbf{c}(\mathscr{C}_\mathbf{c}(Y))$. In the sequel, we
will freely rely on this fact without any explicit reference. \medskip

A family $\mathscr{S}\subseteq 2^Y$ is \emph{uniformly equi-$LC^0$}
\cite{michael:56b} if for every $\varepsilon>0$ there is
$\delta(\varepsilon)>0$ such that for every $S\in\mathscr{S}$, every
two points $y_0,y_1\in S$ with ${\rho(y_0,y_1)<\delta(\varepsilon)}$,
can be joined by a path in $S$ of diameter$\ <\varepsilon$. Based on 
strongly lower semi-factorizable mappings and uniformly equi-$LC^0$
families of sets, we now have the following general result about
continuous intermediate mappings. 

\begin{theorem}
  \label{theorem-choban-sel-v17:1}
  Let $X$ be a Tychonoff space, $(Y,\rho)$ be a complete metric space
  and ${\Phi: X\to \mathscr{F}_\mathbf{c}(Y)}$ be such that
  $\{\Phi(x): x\in X\}$ is uniformly equi-$LC^0$. If
  ${\theta:X\to \mathscr{C}(Y)}$ is a selection for $\Phi$ such that
  $\mathscr{C}_\mathbf{c}[\theta,\Phi]: X\to
  \mathscr{F}_\mathbf{c}({\mathscr{C}_\mathbf{c}(Y)})$ is
  $H(\rho)$-s.l.s.f, then there exists a continuous mapping
  $\varphi:X\to \mathscr{C}_\mathbf{c}(Y)$ with
  $\theta\leqslant \varphi\leqslant\Phi$.
\end{theorem}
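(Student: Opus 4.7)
The plan is to reduce the theorem to a single application of \cite[Theorem 7.1]{gutev:98}. Observe first that a continuous intermediate mapping $\varphi:X\to\mathscr{C}_\mathbf{c}(Y)$ with $\theta\leqslant\varphi\leqslant\Phi$ is the same thing as a continuous single-valued selection (into the hyperspace $\mathscr{C}_\mathbf{c}(Y)$ equipped with the Hausdorff metric $H(\rho)$) for the set-valued mapping
\[
  \mathscr{C}_\mathbf{c}[\theta,\Phi]:X\longrightarrow \mathscr{F}_\mathbf{c}(\mathscr{C}_\mathbf{c}(Y)),
\]
because the Hausdorff topology and the Vietoris topology coincide on $\mathscr{C}_\mathbf{c}(Y)$. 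It therefore suffices to construct a continuous selection $\varphi:X\to (\mathscr{C}_\mathbf{c}(Y),H(\rho))$ for $\mathscr{C}_\mathbf{c}[\theta,\Phi]$.

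I would then check the hypotheses of \cite[Theorem 7.1]{gutev:98} for this hyperspace-valued mapping. The target $(\mathscr{C}_\mathbf{c}(Y),H(\rho))$ is a complete metric space, because $(Y,\rho)$ is complete, hence $(\mathscr{C}(Y),H(\rho))$ is complete, and $\mathscr{C}_\mathbf{c}(Y)$ is a closed subspace of it. The point-images $\mathscr{C}_\mathbf{c}[\theta,\Phi](x)$ are nonempty, closed and path-connected in $\mathscr{C}_\mathbf{c}(Y)$ by the discussion following \eqref{eq:choban-sel:1}, which relies on \cite[Lemma 1.3]{curtis:80} together with Kelley's growth hyperspace lemma \cite[Lemma 2.3]{zbMATH03102237}. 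The $H(\rho)$-s.l.s.f.\ property of $\mathscr{C}_\mathbf{c}[\theta,\Phi]$ is given by hypothesis, which (in view of Proposition~\ref{proposition-choban-sel-v13:3}) fits perfectly into the framework of \cite{gutev:98}.

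The remaining hypothesis, and what I expect to be the main obstacle, is the uniform equi-$LC^0$ property of the family $\{\mathscr{C}_\mathbf{c}[\theta,\Phi](x):x\in X\}$ in the Hausdorff metric on $\mathscr{C}_\mathbf{c}(Y)$. Given $\varepsilon>0$, I would start from the uniform equi-$LC^0$ modulus $\eta=\delta(\varepsilon/3)>0$ of $\{\Phi(x):x\in X\}$ and, for each $x\in X$ and each pair $S_0,S_1\in \mathscr{C}_\mathbf{c}[\theta,\Phi](x)$ with $H(\rho)(S_0,S_1)<\eta$, construct a Hausdorff-continuous path of continua $S_t\in \mathscr{C}_\mathbf{c}[\theta,\Phi](x)$ of $H(\rho)$-diameter less than $\varepsilon$. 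The natural construction is a Kelley-style interpolation: pair matching points of $S_0$ and $S_1$ at $\rho$-distance less than $\eta$, join each pair by a path in $\Phi(x)$ of diameter less than $\varepsilon/3$ using uniform equi-$LC^0$, and monotonically sweep the union of these paths together with $\theta(x)\cup S_0$ into $\theta(x)\cup S_1$. The delicate points are to keep every $S_t$ above $\theta(x)$ and inside $\Phi(x)$, to ensure path-connectedness of the intermediate continua, and to obtain a diameter estimate uniform in $x$. Once this is settled, \cite[Theorem 7.1]{gutev:98} delivers a continuous selection $\varphi:X\to \mathscr{C}_\mathbf{c}(Y)$ for $\mathscr{C}_\mathbf{c}[\theta,\Phi]$, which is precisely the required continuous intermediate mapping with $\theta\leqslant\varphi\leqslant\Phi$.
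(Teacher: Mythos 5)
Your overall strategy --- reducing everything to one application of \cite[Theorem 7.1]{gutev:98} to the hyperspace-valued mapping $\mathscr{C}_\mathbf{c}[\theta,\Phi]$, after checking completeness of $(\mathscr{C}_\mathbf{c}(Y),H(\rho))$, path-connectedness of the point-images, the s.l.s.f.\ hypothesis and uniform equi-$LC^0$ --- is exactly the paper's. However, there is a genuine gap at the final step. \cite[Theorem 7.1]{gutev:98} is a \emph{set-valued} selection theorem: from a uniformly equi-$LC^0$ family of values, with no dimension restriction on the domain and no higher connectivity assumptions, it produces a continuous selection $\Psi:X\to \mathscr{C}_\mathbf{c}(\mathscr{C}_\mathbf{c}(Y))$; that is, $\Psi(x)$ is a compact connected \emph{subset} of the hyperspace contained in $\mathscr{C}_\mathbf{c}[\theta,\Phi](x)$, not a single point of it. Expecting a single-valued continuous selection here is too optimistic: single-valued Michael-type selection theorems for equi-$LC^0$ families require dimension restrictions on $X$ (or convexity of the values), and the whole point of the Nepomnyashchii--Michael circle of results is that only ``small'' continuum-valued selections are available over arbitrary domains. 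The paper closes this gap by setting $\varphi(x)=\bigcup\Psi(x)$: by Kelley's lemma \cite[Lemma 1.2]{zbMATH03102237} the union of a subcontinuum of the hyperspace is again a continuum, so $\varphi:X\to\mathscr{C}_\mathbf{c}(Y)$, and the union operation preserves continuity \cite[Theorem 5.7]{MR0042109}; since $\theta(x)\subseteq S\subseteq\Phi(x)$ for every $S\in\Psi(x)$, this $\varphi$ is the required intermediate mapping. Without this ``take the union'' step your argument does not close.

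Concerning the uniform equi-$LC^0$ property of $\{\mathscr{C}_\mathbf{c}[\theta,\Phi](x):x\in X\}$, which you correctly single out as the substantive hypothesis to verify: the paper does not prove it, but cites \cite[Lemma 5]{nepomnyashchii:86} and \cite[Lemma 1.4]{curtis:80}. Your Kelley-style interpolation is the right idea in spirit, but you leave open precisely the points you yourself flag as delicate (keeping the intermediate sets above $\theta(x)$, connected, and inside $\Phi(x)$, with a diameter bound uniform in $x$); as written this portion is a plan rather than a proof, and you should either carry it out in full or cite the existing lemma. One further small remark: the original formulation of \cite[Theorem 7.1]{gutev:98} additionally assumes that $Y$ itself is uniformly equi-$LC^0$; the paper arranges this by isometrically embedding $(Y,\rho)$ into a Banach space, and your application should account for this as well.
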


\begin{proof}
  According to \cite[Lemma 5]{nepomnyashchii:86}, see also \cite[Lemma
  1.4]{curtis:80},
  $\left\{\mathscr{C}_\mathbf{c}[\theta,\Phi](x):x\in X\right\}$ is
  uniformly equi-$LC^0$ in $\mathscr{C}_\mathbf{c}(Y)$. Since
  $\mathscr{C}_\mathbf{c}(Y)$ is closed in $\mathscr{C}(Y)$, it is
  also complete with respect to $H(\rho)$. Hence, by \cite[Theorem
  7.1]{gutev:98}, $\mathscr{C}_\mathbf{c}[\theta,\Phi]$ has a
  continuous selection
  $\Psi:X\to \mathscr{C}_\mathbf{c}(\mathscr{C}_\mathbf{c}(Y))$.
  Finally, define $\varphi:X\to 2^Y$ by $\varphi(x)=\bigcup\Psi(x)$,
  $x\in X$.  Then by \cite[Lemma 1.2]{zbMATH03102237}, see also
  \cite[Theorem 2.5]{MR0042109} and \cite[Lemma 2$''$]{MR0367893},
  $\varphi:X\to \mathscr{C}_\mathbf{c}(Y)$. Moreover, $\varphi$ is
  continuous, see \cite[Theorem 5.7]{MR0042109}. Since
  $\theta(x)\subseteq \varphi(x)\subseteq \Phi(x)$, $x\in X$, the
  proof is complete.
\end{proof}

Regarding the proof of Theorem \ref{theorem-choban-sel-v17:1},
let us explicitly remark that the original formulation of
\cite[Theorem 7.1]{gutev:98} is in terms of controlled extensions of
continuous set-valued selections. Moreover, it contains the extra
condition that $Y$ itself must be uniformly equi-$LC^0$. However, one
can always embed $(Y,\rho)$ isometrically into a Banach space and use 
this Banach space instead of $Y$.\medskip

As we will see in Proposition \ref{proposition-Icsvm-vgg-rev:1} of the
next section, Theorem \ref{theorem-choban-sel-v17:1} contains a result
previously obtained by Nepomnyashchii \cite[Theorem
B]{nepomnyashchii:86}, it is in the special case when $X$ is
paracompact, $\Phi$ is l.s.c.\ and $\theta$ is u.s.c. In the same
setting, an alternative proof of Nepomnyashchii's theorem was given by
Michael \cite{michael:04}. In fact, the proof of Theorem
\ref{theorem-choban-sel-v17:1} is essentially the same as that of
Michael. In this regard, let us also remark that in the setting of the
hyperspace $(\mathscr{C}(Y),H(\rho))$, the construction
\eqref{eq:choban-sel-v8:1} was previously used in \cite[Example
3.11]{gutev:00b} where the idea was the same, namely to define a
set-valued mapping in $\mathscr{C}(Y)$ and take a set-valued selection
of this mapping.\medskip

We conclude this section with another general result about continuous
intermediate mappings. It is complementary to Theorem
\ref{theorem-choban-sel-v17:1} and deals with the case when the
domain $X$ has a covering dimension $\dim(X)=0$. The result is valid
for Tychonoff spaces, but for simplicity we state it for normal
spaces. Let us recall that $X$ is a normal space with $\dim(X)=0$
precisely when every two disjoint closed subsets are contained in
disjoint clopen subsets.

\begin{theorem}
  \label{theorem-choban-sel-v17:2}
  Let $X$ be a normal space with $\dim(X)=0$, $(Y,\rho)$ be a complete
  metric space and $\Phi:X\to \mathscr{F}(Y)$. If
  $\theta:X\to \mathscr{C}(Y)$ is a selection for $\Phi$ such that
  $\mathscr{C}[\theta,\Phi]:X\to \mathscr{F}(\mathscr{C}(Y))$ is
  $H(\rho)$-l.s.f., then there exists a continuous mapping
  $\varphi:X\to \mathscr{C}(Y)$ with
  $\theta\leqslant\varphi\leqslant \Phi$.
\end{theorem}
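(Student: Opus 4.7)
The plan is to imitate the strategy of Theorem~\ref{theorem-choban-sel-v17:1}, but with the ``continuous set-valued selection'' step in the hyperspace replaced by a \emph{zero-dimensional} single-valued selection theorem. Concretely, I would first note that since $(Y,\rho)$ is complete, so is $(\mathscr{C}(Y),H(\rho))$, because the hyperspace of compact sets of a complete metric space is complete with respect to the Hausdorff distance. The hypothesis then says that $\mathscr{C}[\theta,\Phi]\colon X\to \mathscr{F}(\mathscr{C}(Y))$ is an $H(\rho)$-l.s.f.\ mapping into a complete metric space, with closed values.

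Next I would invoke the zero-dimensional counterpart of the selection theorem for lower semi-factorizable mappings from \cite{gutev:00a}: when the domain is normal with $\dim(X)=0$ and the target is a complete metric space, every $H(\rho)$-l.s.f.\ mapping with closed values admits a continuous \emph{singleton-valued} selection. Applied to $\mathscr{C}[\theta,\Phi]$, this produces a continuous map $\varphi\colon X\to \mathscr{C}(Y)$ such that $\varphi(x)\in \mathscr{C}[\theta,\Phi](x)$ for every $x\in X$. Unwinding the definition \eqref{eq:choban-sel-v8:1}, this is precisely the statement that $\varphi(x)$ is a compact subset of $Y$ with $\theta(x)\subseteq \varphi(x)\subseteq \Phi(x)$, so $\theta\leqslant\varphi\leqslant\Phi$ as required.

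Note that, in contrast with Theorem~\ref{theorem-choban-sel-v17:1}, no connectedness or uniform equi-$LC^0$ assumption on $\{\Phi(x):x\in X\}$ is needed: in the zero-dimensional case, the selection is obtained by a direct inductive refinement along locally finite cozero covers furnished by the l.s.f.\ property, which converges in the complete hyperspace $(\mathscr{C}(Y),H(\rho))$ without the need to ``connect up'' the partial selections. This is also why the weaker l.s.f.\ hypothesis (rather than s.l.s.f.) suffices.

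The main obstacle is really bookkeeping, namely identifying the precise zero-dimensional selection theorem in \cite{gutev:00a} and verifying that it applies to $\mathscr{C}[\theta,\Phi]$ at the stated level of generality (Tychonoff rather than normal). Once the singleton-valued selection is produced in the complete metric hyperspace $(\mathscr{C}(Y),H(\rho))$, the rest of the argument is immediate from the definition of $\mathscr{C}[\theta,\Phi]$; there is no further ``$\varphi=\bigcup\Psi$'' step to perform, since the selection lives directly in $\mathscr{C}(Y)$.
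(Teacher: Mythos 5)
Your reduction is the right one and matches the paper's: both arguments apply a zero-dimensional selection principle directly to the $H(\rho)$-l.s.f.\ mapping $\mathscr{C}[\theta,\Phi]$ into the complete hyperspace $(\mathscr{C}(Y),H(\rho))$, and then read off $\theta\leqslant\varphi\leqslant\Phi$ from \eqref{eq:choban-sel-v8:1}; your observations that no equi-$LC^0$ hypothesis is needed and that l.s.f.\ (rather than s.l.s.f.) suffices because one selects a single point of the hyperspace are both correct and are exactly why the paper drops those hypotheses here. The one ingredient you leave as a black box --- ``the zero-dimensional counterpart of the selection theorem for l.s.f.\ mappings'' --- is precisely what the paper assembles in two cited steps rather than proving by a direct inductive refinement: by \cite[Theorem 5.1]{gutev:00a} the l.s.f.\ property yields a factorization $\mathscr{C}[\theta,\Phi]\geqslant \Psi\circ g$ through a metrizable space $Z$ with $\Psi:Z\to\mathscr{F}(\mathscr{C}(Y))$ l.s.c., Morita's lemma \cite[Lemma 2.2]{morita:75} arranges $\dim(Z)=0$ (this is where $\dim(X)=0$ actually enters), and then Michael's classical zero-dimensional selection theorem \cite{michael:56,michael-pixley:80} on the paracompact space $Z$ produces $\psi:Z\to\mathscr{C}(Y)$, whence $\varphi=\psi\circ g$. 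Your alternative of running the successive-approximation argument directly on $X$ would also work --- normality plus $\dim(X)=0$ lets you refine the locally finite cozero covers furnished by the l.s.f.\ condition into disjoint clopen covers, and completeness of $(\mathscr{C}(Y),H(\rho))$ together with closedness of the values gives convergence --- but it amounts to reproving the factorization machinery in this special case; the paper's route buys a two-line proof and, as you anticipated, extends to Tychonoff domains since the factorization theorem is stated at that generality.
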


\begin{proof}
  Since $\mathscr{C}[\theta,\Phi]$ is $H(\rho)$-l.s.f., by
  \cite[Theorem 5.1]{gutev:00a} and \cite[Lemma 2.2]{morita:75}, there
  is a metrizable space $Z$ with $\dim(Z)=0$, an l.s.c.\ mapping
  $\Psi:Z\to \mathscr{F}(\mathscr{C}(Y))$ and a continuous map
  $g:X\to Z$ such that
  $\Psi(g(x))\subseteq \mathscr{C}[\theta,\Phi](x)$, for every
  $x\in X$.  Since $Z$ is paracompact, by a result of Michael
  \cite{michael:56}, see also \cite{michael-pixley:80}, the l.s.c.\
  mapping $\Psi$ has a continuous selection
  $\psi:Z\to \mathscr{C}(Y)$. Then the composite mapping
  $\varphi=\psi\circ g:X\to \mathscr{C}(Y)$ is as required.
\end{proof}

\section{Intermediate Selections and Paracompactness}
\label{sec:interm-select-parac}

In this section, we finalise the proof of Theorem
\ref{theorem-choban-sel-v6:1}. In the one direction, it is based on
Theorem \ref{theorem-choban-sel-v17:1}. To prepare for this, let us
state explicitly the following example of strongly lower
semi-factorizable mappings. 

\begin{proposition}
  \label{proposition-Icsvm-vgg-rev:1}
  Let $X$ be normal and $\tau$-paracompact, $(Y,\rho)$ be a metric
  space with $w(Y)\leq\tau$, and $\Phi:X\to \mathscr{F}_\mathbf{c}(Y)$
  be l.s.c.\ such that $\{\Phi(x): x\in X\}$ is uniformly
  equi-$LC^0$. If $\theta:X\to \mathscr{C}(Y)$ is a u.s.c.\ selection
  for $\Phi$, then the associated mapping
  $\mathscr{C}_\mathbf{c}[\theta,\Phi]:X\to
  \mathscr{F}_\mathbf{c}(\mathscr{C}_\mathbf{c}(Y))$ is
  $H(\rho)$-s.l.s.f.
\end{proposition}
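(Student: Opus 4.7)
The plan is to verify the definition of $H(\rho)$-s.l.s.f.\ directly. Fix a closed $F\subseteq X$, $\varepsilon>0$, and a selection $\sigma\colon F\to\mathscr{C}(\mathscr{C}_\mathbf{c}(Y))$ for $\mathscr{C}_\mathbf{c}[\theta,\Phi]\uhr F$. We must exhibit a locally finite cozero cover $\mathscr{U}$ of $F$ with $|\mathscr{U}|\leq w(\mathscr{C}_\mathbf{c}(Y))\leq\tau$ and a map $\pi\colon\mathscr{U}\to F$ such that $\sigma(\pi(U))\subseteq\mathbf{O}_\varepsilon^{H(\rho)}(\mathscr{C}_\mathbf{c}[\theta,\Phi](x))$ for every $x\in U\in\mathscr{U}$.

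First, pick $\delta=\delta(\varepsilon/4)>0$ from uniform equi-$LC^0$ and a dense $D\subseteq Y$ with $|D|\leq w(Y)\leq\tau$. For each pair of finite sets $E,L\subseteq D$, define the open set
\[
W_{E,L}=\bigcap_{e\in E}\Phi^{-1}\bigl(\mathbf{O}_{\delta/16}^\rho(e)\bigr)\cap\theta^\#\bigl(\mathbf{O}_{\delta/8}^\rho(L)\bigr),
\]
which is open by l.s.c.\ of $\Phi$ and u.s.c.\ of $\theta$. For any $x_0\in F$, taking $E$ to be a $D$-approximation of a $\delta/32$-net of the compact set $\bigcup\sigma(x_0)\subseteq\Phi(x_0)$ and $L$ a similar approximation of a $\delta/16$-net of $\theta(x_0)$ puts $x_0\in W_{E,L}$; hence $\{W_{E,L}:E,L\subseteq D\text{ finite}\}$ is an open cover of $F$ of cardinality $\leq\tau$. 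Since $F$ is closed in the normal $\tau$-paracompact space $X$, it inherits normality and $\tau$-paracompactness, so this cover refines to a locally finite cozero cover $\mathscr{U}$ of $F$ with $|\mathscr{U}|\leq\tau$. For each $U\in\mathscr{U}$, contained in some $W_{E(U),L(U)}$, set $\pi(U)$ to be a point of $F\cap W_{E(U),L(U)}$ whose own choices of nets coincide with $(E(U),L(U))$.

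To verify the key inclusion, fix $U\in\mathscr{U}$, $x\in U$, and $S\in\sigma(\pi(U))$. The construction of $\pi(U)$ gives $S\subseteq\bigcup\sigma(\pi(U))\subseteq\mathbf{O}_{\delta/16}^\rho(E(U))$, and connectedness of $S$ makes the subset $E'=\{e\in E(U):\rho(e,S)<\delta/16\}$ a $\delta/8$-chained $\delta/8$-net of $S$. Choosing approximants $f_e\in\Phi(x)\cap\mathbf{O}_{\delta/16}^\rho(e)$ for $e\in E'$ (available since $x\in W_{E(U),L(U)}$), adjacent approximants are within $\delta/4<\delta$, so uniform equi-$LC^0$ joins each pair by a path in $\Phi(x)$ of diameter $<\varepsilon/4$; their union $P\subseteq\Phi(x)$ is connected and compact with $H(\rho)(S,P)<\varepsilon/2$. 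The $\theta^\#$ clause of $W_{E(U),L(U)}$, combined with $\theta(\pi(U))\subseteq S$, yields $\theta(x)\subseteq\mathbf{O}_{3\delta/16}^\rho(S)\subseteq\mathbf{O}_{\delta/2}^\rho(P)$. Applying the growth-hyperspace / uniform equi-$LC^0$ argument already invoked for $\mathscr{C}_\mathbf{c}[\theta,\Phi]$ in the proof of Theorem~\ref{theorem-choban-sel-v17:1} (namely \cite[Lemma~5]{nepomnyashchii:86} together with \cite[Lemma~1.3]{curtis:80}) to the compact set $P\cup\theta(x)$, we enlarge it to a connected compact $T\subseteq\Phi(x)$ with $\theta(x)\subseteq T$ and $H(\rho)(T,P)<\varepsilon/2$. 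Then $T\in\mathscr{C}_\mathbf{c}[\theta,\Phi](x)$ and $H(\rho)(S,T)<\varepsilon$.

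The main obstacle is this final assembly of $T$: producing a compact \emph{connected} subset of $\Phi(x)$ that contains $\theta(x)$, sits in the $(\varepsilon/2)$-neighborhood of the already-constructed $P$, and is built from purely finite local data. The tool is precisely the uniform equi-$LC^0$ property of $\{\mathscr{C}_\mathbf{c}[\theta,\Phi](x):x\in X\}$ in the hyperspace. A secondary technical point is enforcing the cardinality bound $|\mathscr{U}|\leq\tau$, which is handled by parametrizing the preliminary cover via finite subsets of a $\leq\tau$-dense subset of $Y$ and then invoking the normality and $\tau$-paracompactness inherited by $F$.
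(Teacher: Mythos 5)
Your proposal takes a genuinely different route from the paper: the paper disposes of this proposition in two citations --- first, $\mathscr{C}_\mathbf{c}[\theta,\Phi]$ is l.s.c.\ by \cite[Lemma 5.3]{michael:04}, and then any l.s.c.\ mapping of a normal $\tau$-paracompact space into a metric space of weight $\leq\tau$ is s.l.s.f.\ by \cite[Example 2.6]{gutev:98}. You instead verify the s.l.s.f.\ definition from scratch, which in effect means unwinding both citations simultaneously. The covering machinery you set up (parametrising by finite subsets of a dense set of cardinality $\leq\tau$, choosing $\pi(U)$ among the witnessing points, extracting the $\delta/8$-chained net $E'$ from connectedness of $S$, and building the connected compactum $P$ from short $LC^0$-paths between approximants) is sound and is essentially the standard argument hiding inside the cited results.

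However, there is a genuine gap at exactly the step you flag as ``the main obstacle'': producing the connected compact $T$ with $\theta(x)\subseteq T\subseteq\Phi(x)$ and $H(\rho)(T,P)<\varepsilon/2$. The two tools you invoke do not deliver it. The uniform equi-$LC^0$ property of the family $\{\mathscr{C}_\mathbf{c}[\theta,\Phi](x):x\in X\}$ in the hyperspace (\cite[Lemma 5]{nepomnyashchii:86}) concerns joining by a short path two sets that are \emph{already} elements of $\mathscr{C}_\mathbf{c}[\theta,\Phi](x)$, i.e.\ already connected and already containing $\theta(x)$; your set $P\cup\theta(x)$ is neither. And \cite[Lemma 1.3]{curtis:80} gives a connected compact superset inside $\Phi(x)$ with no metric control whatsoever, so it cannot guarantee $H(\rho)(T,P)<\varepsilon/2$. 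What you actually need is a \emph{controlled} connectification lemma: a compact $K\subseteq\Phi(x)$ contained in a small neighbourhood of a connected compact $P\subseteq\Phi(x)$ is contained in a connected compact $T\subseteq\Phi(x)$ lying in a prescribed neighbourhood of $P$. This is true under uniform equi-$LC^0$, but it is not immediate: since $\theta(x)$ may be infinite and totally disconnected, one must attach infinitely many connecting paths to it, and the closure of such a union need not be compact in a general (here not even complete) metric space --- one has to iterate over finer and finer nets of $\theta(x)$ with summable path diameters. That construction is precisely the content of the proof of \cite[Lemma 5.3]{michael:04}, which the paper's own proof cites to absorb the whole difficulty; as written, your argument asserts the conclusion of that lemma without proving it or correctly citing a statement that implies it.
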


\begin{proof}
  By \cite[Lemma 5.3]{michael:04},
  $\mathscr{C}_\mathbf{c}[\theta,\Phi]$ is l.s.c. Then by
  \cite[Example 2.6]{gutev:98}, it is also $H(\rho)$-s.l.s.f.\ because
  $w(\mathscr{C}_\mathbf{c}(Y))\leq\tau$ and $X$ is normal and
  $\tau$-paracompact. 
\end{proof}

The inequality ``$\theta<\varphi\leqslant \Phi$'' in Theorem
\ref{theorem-choban-sel-v6:1} is based on the following general
property of intermediate mappings.

\begin{proposition}
  \label{proposition-choban-sel-v6:1}
  Let $X$ be normal and $\tau$-paracompact, $Y$ be a completely
  metrizable space with $w(Y)\leq\tau$, $\Phi:X\to \mathscr{F}(Y)$ be
  l.s.c.\ and ${\theta:X\to \mathscr{C}(Y)}$ be u.s.c.\ with
  $\theta<\Phi$.  Then there exists a u.s.c.\ mapping
  $\psi:X\to \mathscr{C}(Y)$ with $\theta<\psi\leqslant \Phi$.
\end{proposition}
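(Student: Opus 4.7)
The plan is to construct $\psi$ as $\theta\cup\psi_0$, where $\psi_0$ is a u.s.c.\ compact-valued selection of a suitable ``residual'' l.s.c.\ mapping whose values are uniformly bounded away from $\theta$. Fix a compatible complete metric $\rho$ on $Y$, and define $d:X\to(0,1]$ by
\[
  d(x)=\min\bigl\{1,\sup\{\rho(y,\theta(x)):y\in\Phi(x)\}\bigr\}.
\]
Because $\theta(x)$ is compact and properly contained in $\Phi(x)$, one has $d(x)>0$. A routine perturbation argument---select $y_0\in\Phi(x_0)$ nearly realising the sup, then use l.s.c.\ of $\Phi$ to find nearby $y_x\in\Phi(x)$ and u.s.c.\ of $\theta$ (which forces $x\mapsto\rho(y_0,\theta(x))$ to be l.s.c.)\ to keep these $y_x$ far from $\theta(x)$---shows that $d$ is l.s.c.

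Since $\tau\geq\omega$, $X$ is normal and countably paracompact, so Dowker's insertion theorem applied to the u.s.c.\ constant $0$ and the l.s.c.\ function $d$ yields a continuous $\varepsilon:X\to(0,1)$ with $0<\varepsilon(x)<d(x)$. I then set
\[
  \Phi_0(x)=\overline{\Phi(x)\cap\{y\in Y:\rho(y,\theta(x))>\varepsilon(x)\}},
\]
the closure being taken in $Y$. The strict inequality $\varepsilon(x)<d(x)$ makes $\Phi_0(x)$ nonempty; the inclusion $\Phi_0(x)\subseteq\Phi(x)$ follows from closedness of $\Phi(x)$; and continuity of $y\mapsto\rho(y,\theta(x))$ forces each point of $\Phi_0(x)$ to lie at distance $\geq\varepsilon(x)>0$ from $\theta(x)$, so $\Phi_0(x)\cap\theta(x)=\emptyset$. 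A perturbation argument analogous to the one for $d$ shows that $\Phi_0:X\to\mathscr{F}(Y)$ is l.s.c.

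Finally, I invoke Michael's compact-valued selection theorem in the version valid for $X$ normal and $\tau$-paracompact with $w(Y)\leq\tau$, obtaining a u.s.c.\ compact-valued selection $\psi_0:X\to\mathscr{C}(Y)$ for $\Phi_0$. Setting $\psi=\theta\cup\psi_0:X\to\mathscr{C}(Y)$, finite unions of u.s.c.\ compact-valued mappings are u.s.c.\ and compact-valued, so $\psi$ has the required type; $\psi\leqslant\Phi$ because both $\theta$ and $\psi_0$ are selections of $\Phi$; and the nonempty set $\psi_0(x)$ is contained in $\psi(x)\setminus\theta(x)$, giving $\theta<\psi$. The main obstacle is invoking the compact-valued selection theorem in the $\tau$-paracompact (rather than fully paracompact) setting; this works because the weight condition $w(Y)\leq\tau$ ensures that every open cover of $X$ arising in Michael's iterative construction has cardinality at most $\tau$, exactly matching the refinability afforded by $\tau$-paracompactness.
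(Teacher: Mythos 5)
Your argument is correct, but it follows a genuinely different route from the paper's. The paper separates $\psi$ from $\theta$ combinatorially: it covers $X$ by the open sets $V_B=\Phi^{-1}(B)\cap\theta^{\#}(Y\setminus\overline{B})$ indexed by a base $\mathscr{B}$ of $Y$ with $|\mathscr{B}|\leq\tau$, uses $\tau$-paracompactness to pass to a locally finite open refinement and normality (the Lefschetz lemma) to shrink it to a closed cover $\{F_B\}$, applies the Choban--Michael compact-valued selection theorem to each $\Phi_B=\overline{\Phi(\cdot)\cap B}$ on $F_B$, and glues the resulting u.s.c.\ selections over the locally finite closed cover. You instead achieve the separation metrically and globally: the gap function $d$ is l.s.c.\ and positive, the classical Dowker insertion theorem (legitimately available, since normal plus $\tau$-paracompact implies normal plus countably paracompact, and only the forward direction of Dowker's theorem is used, so no circularity arises with Lemma \ref{lemma-choban-sel-v12:1}) supplies a continuous $0<\varepsilon<d$, and the single residual mapping $\Phi_0$ is l.s.c.\ with values uniformly $\varepsilon(x)$-separated from $\theta(x)$, after which one application of the same Choban theorem (\cite[Theorem 11.2]{zbMATH03366111}, which is exactly the result valid for normal $\tau$-paracompact domains and completely metrizable ranges of weight $\leq\tau$, so your closing worry is unfounded) finishes the proof. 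What your approach buys is the elimination of all cover bookkeeping and of the Lefschetz shrinking step, at the price of importing Dowker's insertion theorem as a black box and of two $\varepsilon$--$\delta$ semicontinuity verifications (lower semicontinuity of $d$ and of $\Phi_0$) that you only sketch but which do go through as indicated; what the paper's approach buys is self-containedness relative to the selection theorem and a template that it reuses elsewhere (locally finite closed covers plus piecewise selections).
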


\begin{proof}
  Let $\mathscr{B}$ be an open base for the topology of $Y$ with
  $|\mathscr{B}|\leq\tau$. Next, for every $B\in \mathscr{B}$ set
  $V_B=\Phi^{-1}(B)\cap
  \theta^{\#}\left(Y\setminus\overline{B}\right)$. In this way, we get
  an open subset $V_B$ of $X$ such that $B\cap \Phi(x)\neq\emptyset$
  and $\overline{B}\cap \theta(x)=\emptyset$ for every $x\in V_B$.
  Hence, $\{V_B:B\in\mathscr{B}\}$ is an open cover of $X$ because
  $\mathscr{B}$ is a base and by condition,
  $\Phi(x)\backslash\theta(x)\neq\emptyset$ for every $x\in X$.  Since
  $X$ is $\tau$-paracompact, it has a locally finite open cover
  $\{U_B:B\in\mathscr{B}\}$ such that $U_B\subseteq V_B$, for every
  $B\in\mathscr{B}$. Since $X$ is also normal, by the Lefschetz lemma
  \cite{MR0007093}, it has a closed cover $\{F_B:B\in\mathscr{B}\}$
  with $F_B\subseteq U_B$ for every $B\in\mathscr{B}$. Finally, for
  every $B\in\mathscr{B}$, define a set-valued mapping
  $\Phi_B:F_B\to\mathscr{F}(Y)$ by
  $\Phi_B(x)=\overline{\Phi(x)\cap B}$, $x\in F_B$. Obviously, each
  $\Phi_B$ is an l.s.c. selection for $\Phi\uhr F_B$ such that
  \begin{equation}
    \label{eq:choban-sel-v6:2}
    \Phi_B(x)\cap \theta(x)=\emptyset,\quad \text{for every $x\in F_B$.}
  \end{equation}
  By a result of Choban \cite[Theorem 11.2]{zbMATH03366111}, see also
  Michael \cite[Theorem 1.1]{MR0109343}, each $\Phi_B$ admits a
  u.s.c.\ selection $\varphi_B:F_B\to\mathscr{C}(Y)$. Since
  $\{F_B:B\in\mathscr{B}\}$ is a locally finite closed cover of $X$,
  we may now define a u.s.c.\ selection $\varphi:X\to \mathscr{C}(Y)$ for
  $\Phi$ by
  $\varphi(x)=\bigcup\left\{\varphi_B(x):x\in F_B\ \text{and}\ B\in
    \mathscr{B}\right\}$, $x\in X$. Then by
  \eqref{eq:choban-sel-v6:2}, we get that
  $\varphi(x)\cap \theta(x)=\emptyset$ for all $x\in X$. Hence, we may
  define the required intermediate u.s.c.\ mapping
  $\psi:X\to \mathscr{C}(Y)$ by $\psi(x)=\theta(x)\cup\varphi(x)$,
  $x\in X$.
\end{proof}

In this section, and what follows, $\J(2)$ is the hedgehog with two
spines. By identifying the two spines with the intervals $[-1,0]$ and
$[0,1]$, it follows from \eqref{eq:choban-sel-v7:1} that $(\J(2),d)$
is isometric to the usual interval $[-1,1]$ when this interval is
equipped with the Euclidean distance $d(s,t)=|s-t|$, $s,t\in
[-1,1]$. This implies the following immediate property of the
connected subsets of $\J(\tau)$.

\begin{proposition}
  \label{proposition-choban-sel-v17:2}
  If $\tau$ is a cardinal number, then the family
  $\mathscr{F}_\mathbf{c}(\J(\tau))$ is uniformly equi-$LC^0$ in
  $(\J(\tau),d)$.
\end{proposition}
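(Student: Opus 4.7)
The plan is to set $\delta(\varepsilon):=\varepsilon$ and, given any $S\in\mathscr{F}_\mathbf{c}(\J(\tau))$ and $y_0,y_1\in S$ with $d(y_0,y_1)<\varepsilon$, to exhibit an arc inside $S$ joining $y_0$ and $y_1$ whose diameter is at most $d(y_0,y_1)$. Once constructed, this immediately gives uniform equi-$LC^0$ with modulus $\delta(\varepsilon)=\varepsilon$.

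The first step would be a transparent structural description of the connected subsets of $\J(\tau)$. The metric \eqref{eq:choban-sel-v7:1} makes each ``open spine'' $(0,1]\times\{\alpha\}$ open in $\J(\tau)$, because an open ball of radius $<s$ around $\langle s,\alpha\rangle$ stays inside that spine. From this I would deduce that, for any connected $S\subseteq\J(\tau)$, either (i) $0\notin S$ and $S$ lies in a single spine (otherwise $\{S\cap (0,1]\times\{\alpha\}\}_\alpha$ would be a nontrivial clopen partition of $S$), or (ii) $0\in S$, in which case for every $\alpha$ the slice $S_\alpha:=S\cap(\{0\}\cup(0,1]\times\{\alpha\})$ is an interval of the $\alpha$-spine containing $0$. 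The latter is because any gap at a point $\langle s,\alpha\rangle$ with $s>0$ could be used to separate $S$ by the disjoint open sets $\{0\}\cup(0,s)\times\{\alpha\}\cup\bigcup_{\beta\neq\alpha}(0,1]\times\{\beta\}$ and $(s,1]\times\{\alpha\}$.

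Armed with this description, I would build the required arc by cases. If $y_0$ and $y_1$ both lie in $\{0\}\cup(0,1]\times\{\alpha\}$ for a common $\alpha$, then the structural fact places both in the interval $S_\alpha$ (or in the single-spine $S$ of alternative (i)), so the straight spine-segment between them lies in $S$ and is an arc of diameter $d(y_0,y_1)$. Otherwise $y_i=\langle s_i,\alpha_i\rangle$ with $\alpha_0\neq\alpha_1$ and $s_0,s_1>0$, so $d(y_0,y_1)=s_0+s_1$; since $S$ now meets two distinct open spines, alternative (i) is excluded, whence $0\in S$ and both $S_{\alpha_0}$ and $S_{\alpha_1}$ are intervals containing $0$. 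The concatenation of the spine-segments from $y_0$ to $0$ and from $0$ to $y_1$ is then an arc in $S$; its diameter is bounded by $s_0+s_1=d(y_0,y_1)<\varepsilon$, because two points on opposite halves contribute $u+v\le s_0+s_1$ under the metric, and two points on the same half are closer still.

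The only step with any genuine content is the structural description of connected subsets, and this is immediate from \eqref{eq:choban-sel-v7:1} and the resulting openness of the individual spines. Once it is in hand, no refinement of $\delta(\varepsilon)=\varepsilon$ is needed, and no use of the closedness of $S$ is required either---only its connectedness enters the argument.
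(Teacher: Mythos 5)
Your argument is correct and matches the paper's (essentially unstated) reasoning: the paper offers no formal proof, treating the proposition as immediate from the observation that any two spines of $\J(\tau)$ form an isometric copy of $[-1,1]$, which is exactly the geodesic-segment picture you make precise. Your structural description of connected subsets and the choice $\delta(\varepsilon)=\varepsilon$ correctly fill in the details, including the observation that closedness of $S$ is never used.
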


A function $\xi:X\to \R$ is \emph{lower} (\emph{upper})
\emph{semi-continuous} if the set
\[
\{x\in X:\xi(x)>r\}\quad \text{(respectively, $\{x\in X:\xi(x)<r\}$)}
\]
is open in $X$ for every $r\in \R$. For functions $\xi,\eta:X\to \R$,
we write that $\xi< \eta$ ($\xi\leq \eta$), if $\xi(x)<\eta(x)$
(respectively, $\xi(x)\leq \eta(x)$), for every $x\in X$.  In these
terms, we have the following set-valued interpretation of Dowker's
insertion theorem \cite{dowker:51}. 

\begin{lemma}
  \label{lemma-choban-sel-v12:1}
  For a space $X$, the following are equivalent\textup{:}
  \begin{enumerate}
  \item\label{item:choban-sel-v7:1} $X$ is countably paracompact and
    normal. 
  \item\label{item:choban-sel-v7:2} If
    $\Phi:X\to \mathscr{F}_\mathbf{c}(\J(2))$ is l.s.c.,
    $\theta:X\to \mathscr{C}(\J(2))$ is u.s.c.\ and $\theta<\Phi$,
    then there exists a continuous mapping
    ${\varphi:X\to \mathscr{C}_\mathbf{c}(\J(2))}$ such that 
    ${\theta<\varphi\leqslant \Phi}$.
  \item\label{item:choban-sel-v7:3} If $\xi:X\to \R$ is upper
    semicontinuous, $\eta:X\to \R$ is lower semicontinuous and
    $\xi<\eta$, then there exists a continuous function $f:X\to \R$
    such that $\xi < f < \eta$.
  \end{enumerate}
\end{lemma}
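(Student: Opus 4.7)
The plan is to close the cycle (a)$\Rightarrow$(b)$\Rightarrow$(c)$\Rightarrow$(a), invoking Dowker's classical insertion theorem \cite{dowker:51} for the final step.

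For (a)$\Rightarrow$(b), I would set $\tau=\omega$ and combine three ingredients already established in the paper. Since $\J(2)$ is completely metrizable with $w(\J(2))=\omega$, Proposition~\ref{proposition-choban-sel-v6:1} applied to the strict pair $\theta<\Phi$ yields an intermediate u.s.c.\ mapping $\psi:X\to\mathscr{C}(\J(2))$ with $\theta<\psi\leqslant\Phi$. Next, Proposition~\ref{proposition-choban-sel-v17:2} shows that $\{\Phi(x):x\in X\}\subseteq\mathscr{F}_\mathbf{c}(\J(2))$ is uniformly equi-$LC^0$, so Proposition~\ref{proposition-Icsvm-vgg-rev:1} applied to the pair $(\psi,\Phi)$ gives that $\mathscr{C}_\mathbf{c}[\psi,\Phi]$ is $H(d)$-s.l.s.f. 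Finally, Theorem~\ref{theorem-choban-sel-v17:1} (applicable because $X$ is Tychonoff, being normal and Hausdorff, and $(\J(2),d)$ is complete) produces a continuous $\varphi:X\to\mathscr{C}_\mathbf{c}(\J(2))$ with $\psi\leqslant\varphi\leqslant\Phi$; combined with $\theta<\psi$, this delivers $\theta<\varphi\leqslant\Phi$.

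For (b)$\Rightarrow$(c), I would exploit the isometry $(\J(2),d)\cong([-1,1],|\cdot|)$ highlighted in the text. Given u.s.c.\ $\xi$ and l.s.c.\ $\eta$ with $\xi<\eta$, first compose with an increasing homeomorphism $h:\R\to(-1,1)$ to reduce to the case where $\xi,\eta$ take values in $(-1,1)$. Set $\theta_1(x)=[-1,\xi(x)]$ and $\Phi_1(x)=[-1,\eta(x)]$; a routine check confirms that $\theta_1$ is u.s.c.\ (compact-valued), $\Phi_1$ is l.s.c.\ with connected compact values, and $\theta_1<\Phi_1$. Condition (b) then produces a continuous $\varphi_1:X\to\mathscr{C}_\mathbf{c}(\J(2))$ with $\theta_1<\varphi_1\leqslant\Phi_1$; since each $\varphi_1(x)$ is a compact subinterval of $[-1,1]$ strictly containing $[-1,\xi(x)]$, it has the form $[-1,b(x)]$ with $b=\max\circ\,\varphi_1$ continuous and $\xi<b\leqslant\eta$. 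Applying (b) symmetrically to $x\mapsto[-1,-\eta(x)]$ and $x\mapsto[-1,-\xi(x)]$ yields a continuous $c:X\to\R$ with $-\eta<c\leqslant-\xi$, i.e.\ $\xi\leqslant-c<\eta$. Then $f=(b-c)/2$ is continuous with $\xi<f<\eta$: the strict $\xi<b$ plus $\xi\leqslant-c$ gives $\xi<f$, and $b\leqslant\eta$ plus the strict $-c<\eta$ gives $f<\eta$. Composing with $h^{-1}$ recovers the desired continuous function in the original coordinates. Finally, (c)$\Rightarrow$(a) is the classical Dowker insertion theorem.

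The main obstacle is in the passage (b)$\Rightarrow$(c): condition (b) furnishes only a \emph{single} strict containment (namely $\theta<\varphi$, while $\varphi\leqslant\Phi$ remains only an inclusion), whereas (c) demands strict inequalities on both sides. The resolution is to apply (b) twice in symmetric ``directions'' and average the resulting continuous real-valued functions; each application then contributes the required strictness on one of the two sides, and the two strictnesses combine additively under averaging.
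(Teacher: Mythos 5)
Your proposal is correct and follows essentially the same route as the paper: (a)$\Rightarrow$(b) via Propositions \ref{proposition-choban-sel-v6:1}, \ref{proposition-choban-sel-v17:2}, \ref{proposition-Icsvm-vgg-rev:1} and Theorem \ref{theorem-choban-sel-v17:1}; (b)$\Rightarrow$(c) by encoding $\xi,\eta$ as interval-valued mappings, applying (b) twice in symmetric directions, and averaging to get strictness on both sides; and (c)$\Rightarrow$(a) by Dowker's theorem. The only cosmetic difference is that you make the paper's ``similarly, using $\eta$'' step explicit via the reflection $t\mapsto -t$.
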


\begin{proof}
  To show that
  \ref{item:choban-sel-v7:1}$\implies$\ref{item:choban-sel-v7:2},
  suppose that $X$ is countably paracompact and normal,
  $\Phi:X\to \mathscr{F}_\mathbf{c}([-1,1])$ is l.s.c.\ and
  $\theta:X\to \mathscr{C}([-1,1])$ is u.s.c.\ with $\theta<\Phi$. By
  Proposition \ref{proposition-choban-sel-v6:1}, there exists a
  u.s.c.\ mapping $\psi:X\to \mathscr{C}([-1,1])$ with
  $\theta<\psi\leqslant \Phi$. Finally, since $\Phi$ is convex-valued,
  it follows from Proposition \ref{proposition-Icsvm-vgg-rev:1} that
  $\mathscr{C}_\mathbf{c}[\psi,\Phi]:X\to
  \mathscr{F}_\mathbf{c}(\mathscr{C}_\mathbf{c}([-1,1]))$ is
  $H(\rho)$-s.l.s.f.  Therefore, by Theorem
  \ref{theorem-choban-sel-v17:1}, there exists a continuous mapping
  $\varphi:X\to \mathscr{C}_\mathbf{c}([-1,1])$ such that
  $\theta<\psi\leqslant \varphi\leqslant \Phi$.\smallskip

  Suppose that \ref{item:choban-sel-v7:2} holds, and take functions
  $\xi,\eta:X\to \R$ such that $\xi$ is upper semicontinuous, $\eta$
  is lower semicontinuous and $\xi<\eta$. Using the order preserving
  homeomorphism $h(t)=\frac{t}{1+|t|}$, $t\in \R$, of the real line
  $\R$ onto the interval $(-1, 1)$, we may assume that
  $\xi,\eta:X\to (-1,1)$. Define mapping
  $\Phi_\xi,\theta_\xi:X\to \mathscr{C}_\mathbf{c}([-1,1])$ by
  $\Phi_\xi(x)=[-1,\eta(x)]$ and $\theta_\xi(x)=[-1,\xi(x)]$,
  $x\in X$. Then $\theta_\xi<\Phi_\xi$. Moreover, $\Phi_\xi$ is
  l.s.c.\ and $\theta_\xi$ is u.s.c.\ \cite[Example
  1.2$^*$]{MR0077107}, see also \cite[Proposition
  5.4]{gutev:11c}. Hence, by \ref{item:choban-sel-v7:2}, there exists
  a continuous mapping
  $\varphi_\xi:X\to \mathscr{C}_\mathbf{c}([-1,1])$ with
  $\theta_\xi<\varphi_\xi\leqslant \Phi_\xi$. We may now define a
  continuous function $f_\xi:X\to (-1,1)$ by
  $f_\xi(x)=\max\varphi_\xi(x)$, $x\in X$, see e.g.\ \cite[Proposition
  5.4]{gutev:11c}. According to the definitions of $\Phi_\xi$ and
  $\theta_\xi$, and the property of $\varphi_\xi$, it follows that
  $\xi<f_\xi\leq \eta$. Similarly, using the function $\eta$, there
  also exists a continuous function $f_\eta:X\to (-1,1)$ such that
  $\xi\leq f_\eta<\eta$. Then $f=\frac{f_\xi+f_\eta}2$ is as required
  in \ref{item:choban-sel-v7:3}. Since the implication
  \ref{item:choban-sel-v7:3}$\implies$\ref{item:choban-sel-v7:1} is a
  part of Dowker's insertion theorem \cite[Theorem 4]{dowker:51}, the
  proof is complete.
\end{proof}

Regarding the proper place of Lemma \ref{lemma-choban-sel-v12:1}, it
is evident from its proof that the countable paracompactness of $X$ is
only essential for the relation $\theta<\varphi\leqslant \Phi$,
compare with Lemma \ref{lemma-choban-sel-v12:2} in the last section.

\begin{proof}[Proof of Theorem \ref{theorem-choban-sel-v6:1}]
  Let $\tau$ be an infinite cardinal number, $X$ be a
  $\tau$-paracom\-pact normal space,
  $\Phi:X\to \mathscr{F}_\mathbf{c}(\J(\tau))$ be l.s.c.\ and
  $\theta:X\to \mathscr{C}(\J(\tau))$ be u.s.c.\ with $\theta<\Phi$.
  By Proposition \ref{proposition-choban-sel-v6:1}, there exists a
  u.s.c.\ mapping $\psi:X\to \mathscr{C}(\J(\tau))$ such that
  $\theta<\psi\leqslant \Phi$. By Proposition
  \ref{proposition-choban-sel-v17:2}, the family $\{\Phi(x):x\in X\}$
  is uniformly equi-$LC^0$. Hence, by Proposition
  \ref{proposition-Icsvm-vgg-rev:1}, the mapping
  $\mathscr{C}_\mathbf{c}[\psi,\Phi]:X\to
  \mathscr{F}_\mathbf{c}(\mathscr{C}_\mathbf{c}(\J(\tau)))$ is
  $H(\rho)$-s.l.s.f. Thus, according to Theorem
  \ref{theorem-choban-sel-v17:1}, there exists a continuous mapping
  $\varphi:X\to \mathscr{C}_\mathbf{c}(Y)$ such that
  $\theta<\psi\leqslant \varphi\leqslant \Phi$. This shows
  \ref{item:choban-sel-v6:2} of Theorem \ref{theorem-choban-sel-v6:1}.
  \smallskip

  Conversely, assume that $X$ is as in \ref{item:choban-sel-v6:2} of
  Theorem \ref{theorem-choban-sel-v6:1}. Since
  ${\J(2)\subseteq \J(\tau)}$, it follows from Lemma
  \ref{lemma-choban-sel-v12:1} that $X$ is countably paracompact and
  normal. To see that $X$ is also $\tau$-paracompact, take an open
  cover $\mathscr{U}$ of $X$ with $|\mathscr{U}|\leq \tau$. Next, for
  every $x\in X$, let $\mathscr{U}_x=\{U\in \mathscr{U}: x\in
  U\}$. When $\mathscr{U}$ is equipped with the discrete topology,
  this defines an l.s.c.\ mapping $x\to \mathscr{U}_x$, $x\in X$, see
  the proof of \cite[Theorem 11.2]{zbMATH03366111}. In our case, we
  may use the corresponding sub-hedgehog $\J(\mathscr{U}_x)$ to get an
  l.s.c.\ mapping into the subsets of $\J(\mathscr{U})$ corresponding
  to the cover $\mathscr{U}$. Namely, define
  $\Phi:X\to \mathscr{F}_\mathbf{c}(\J(\mathscr{U}))$ by
  $\Phi(x)=\J(\mathscr{U}_x)$, $x\in X$. Then $\Phi$ is l.s.c.\ which
  follows easily from the fact that
  $\Phi(x)\setminus \{0\}=(0,1]\times \mathscr{U}_x$, $x\in
  X$. Moreover, $\theta(x)=\{0\}$, $x\in X$, is clearly a u.s.c.\
  selection for $\Phi$ with $\Phi(x)\setminus \theta(x)\neq \emptyset$
  for each $x\in X$. Hence, by assumption, there exists a continuous
  mapping $\varphi:X\to \mathscr{C}_\mathbf{c}(\J(\mathscr{U}))$ such
  that $\theta<\varphi\leqslant \Phi$. For convenience, for each
  $U\in \mathscr{U}$ and $n\in \N$, set
  $O[U,n]=\left(2^{-n},1\right]\times\{U\}$ which is an open subset of
  $\J(\mathscr{U})$. Then for each $n\in \N$, it follows from
  \eqref{eq:choban-sel-v7:1} that the family
  $\Omega_n=\{O[U,n]: U\in \mathscr{U}\}$ is discrete in
  $\J(\mathscr{U})$. Accordingly, the family
  \[
    \mathscr{V}_n=\varphi^{-1}(\Omega_n)=\left\{\varphi^{-1}(O[U,n]): U\in
      \mathscr{U}\right\}
  \]
  is locally finite and open in $X$ because the mapping $\varphi$ is
  compact-valued and both l.s.c.\ and u.s.c. Moreover, $\mathscr{V}_n$
  refines $\mathscr{U}$ because $\varphi(x)\cap O[U,n]\neq \emptyset$
  implies $\Phi(x)\cap O[U,n]\neq \emptyset$ and, therefore, $x\in
  U$. Finally, $\bigcup_{n\in\N}\mathscr{V}_n$ is a cover of $X$
  because
  $\varphi(x)\setminus \{0\}=\varphi(x)\setminus \theta(x)\neq
  \emptyset$ for all $x\in X$. This shows that $X$ is
  $\tau$-paracompact being countably paracompact and normal, see
  \cite[Theorem 1.1]{morita:60}.
\end{proof}

We conclude this section with some related observations. 

\begin{proposition}
  \label{proposition-choban-sel-v8:1}
  For an infinite cardinal number $\tau$ and a space $X$, the
  following conditions are equivalent\textup{:}
  \begin{enumerate}
  \item\label{item:choban-sel-v8:1} $X$ is normal and
    $\tau$-paracompact. 
  \item \label{item:choban-sel-v8:2} If $Y$ is a completely metrizable
    space with $w(Y)\leq\tau$, $\Phi:X\to \mathscr{F}(Y)$ is l.s.c.,
    $\theta:X\to \mathscr{C}(Y)$ is u.s.c.\ and $\theta<\Phi$,
    then there exists a u.s.c.\ mapping $\psi:X\to \mathscr{C}(Y)$
    with $\theta<\psi\leqslant \Phi$.
  \end{enumerate}  
\end{proposition}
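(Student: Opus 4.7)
The implication \ref{item:choban-sel-v8:1}$\implies$\ref{item:choban-sel-v8:2} is a direct restatement of Proposition~\ref{proposition-choban-sel-v6:1}, whose hypotheses and conclusion match \ref{item:choban-sel-v8:2} verbatim, so the substance lies in the converse. For \ref{item:choban-sel-v8:2}$\implies$\ref{item:choban-sel-v8:1}, the plan is to follow the template of the converse in the proof of Theorem~\ref{theorem-choban-sel-v6:1}, but with purely discrete target spaces $Y$ replacing the hedgehog $\J(\cdot)$. The key observation enabling this swap is that compact subsets of a discrete space are automatically finite, so the local finiteness arguments that in Theorem~\ref{theorem-choban-sel-v6:1} required $\varphi$ to be both l.s.c.\ and u.s.c.\ can be recovered from the upper semi-continuity of $\psi$ alone.

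To first establish normality, given disjoint closed $A, B \subseteq X$ I would take $Y = \{a, b, c\}$ with the discrete topology (completely metrizable, $w(Y) = 3 \leq \tau$) and define $\Phi(x) = \{a, c\}$ on $A$, $\Phi(x) = \{b, c\}$ on $B$, $\Phi(x) = Y$ on $X \setminus (A \cup B)$, together with the constant $\theta \equiv \{c\}$. Closedness of $A$ and $B$ makes $\Phi$ l.s.c., $\theta$ is trivially u.s.c., and $\theta < \Phi$ since $\Phi(x) \setminus \{c\} \neq \emptyset$ everywhere. Applying \ref{item:choban-sel-v8:2}, the resulting u.s.c.\ $\psi$ is forced to satisfy $\psi(x) = \{a, c\}$ on $A$ and $\psi(x) = \{b, c\}$ on $B$, so $G_A = \psi^\#(\{a, c\})$ and $G_B = \psi^\#(\{b, c\})$ are open neighborhoods of $A$ and $B$; their intersection is $\psi^\#(\{c\}) = \emptyset$ because $\psi(x) \supsetneq \{c\}$ for every $x$.

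For $\tau$-paracompactness, given an open cover $\mathscr{U}$ of $X$ with $|\mathscr{U}| \leq \tau$, I would take $Y = \{\star\} \cup \mathscr{U}$ with the discrete topology, put $\Phi(x) = \{\star\} \cup \mathscr{U}_x$ where $\mathscr{U}_x = \{U \in \mathscr{U} : x \in U\}$, and $\theta \equiv \{\star\}$. These are l.s.c.\ and u.s.c.\ with $\theta < \Phi$, so \ref{item:choban-sel-v8:2} yields a u.s.c.\ $\psi: X \to \mathscr{C}(Y)$ with each $\psi(x)$ finite and $\emptyset \neq \psi(x) \setminus \{\star\} \subseteq \mathscr{U}_x$. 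Setting $F_U = \psi^{-1}(\{U\})$ for $U \in \mathscr{U}$ gives a family of closed subsets of $U$ covering $X$, and $\{F_U\}_{U \in \mathscr{U}}$ is locally finite: for each $x$, the set $\psi^\#(\{\star\} \cup (\psi(x) \setminus \{\star\}))$ is an open neighborhood of $x$ meeting $F_U$ only for the finitely many $U \in \psi(x)$. The final step will be to invoke the normality established above, via the standard shrinking lemma for locally finite closed families in normal spaces, to expand $\{F_U\}$ into a locally finite open refinement of $\mathscr{U}$, yielding $\tau$-paracompactness.
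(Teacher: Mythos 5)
Your forward implication and your normality argument are both fine, and your reduction to discrete target spaces (with the extra isolated point $\star$) is in substance the same device the paper uses; the paper, however, then simply quotes Choban's characterisation \cite[Theorem 11.2]{zbMATH03366111} of normal $\tau$-paracompact spaces by the existence of compact-valued u.s.c.\ selections, whereas you try to re-derive the covering consequence by hand. Up to the construction of the family $\{F_U\}_{U\in\mathscr{U}}$ your hand derivation is correct: $\{F_U\}$ is indeed a locally finite closed cover of $X$ with $F_U\subseteq U$, and your use of $\psi^{\#}(\psi(x))$ to witness local finiteness is exactly right.

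The gap is the very last step. There is no ``standard shrinking lemma'' asserting that in a normal space a locally finite closed cover $\{F_U\}$ with $F_U\subseteq U$ can be expanded to a locally finite \emph{open} cover $\{G_U\}$ with $F_U\subseteq G_U\subseteq U$; that statement is false. Indeed, if it held, take a normal, non-collectionwise-normal space (Bing's example G), a discrete closed family $\{F_s\}$ admitting no pairwise disjoint open expansion, and apply the alleged lemma to the locally finite closed cover $\{F_s\}\cup\{X\}$ with the open sets $U_s=X\setminus\bigcup_{t\neq s}F_t$ and $U=X$: the resulting locally finite open sets $G_s\subseteq U_s$ satisfy $G_s\cap F_t=\emptyset$ for $t\neq s$, and the Urysohn functions $f_s$ ($f_s\equiv 1$ on $F_s$, $f_s\equiv 0$ off $G_s$) sum to a continuous $f$ with $f\equiv 1$ on each $F_s$, so the pairwise disjoint open sets $\{x:f_s(x)>f(x)/2\}$ separate $\{F_s\}$ --- a contradiction. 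Passing from ``every open cover of cardinality $\leq\tau$ has a locally finite closed refinement'' plus normality to $\tau$-paracompactness is a genuine theorem of Morita \cite{morita:60}, and its proof requires a \emph{second} application of the refinement property (to the open cover by the sets $\psi^{\#}(K)$, $K$ a finite subset of $\{\star\}\cup\mathscr{U}$ containing $\star$, of which there are at most $\tau$), not a one-step swelling from normality. So either cite Morita's theorem at this point, run that double-refinement argument explicitly, or --- as the paper does --- bypass the covering argument entirely by quoting \cite[Theorem 11.2]{zbMATH03366111}.
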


\begin{proof}
  The implication
  \ref{item:choban-sel-v8:1}$\implies$\ref{item:choban-sel-v8:2} is
  Proposition \ref{proposition-choban-sel-v6:1}. Conversely, assume
  that \ref{item:choban-sel-v8:2} holds and take an l.s.c.\ mapping
  $\Phi:X\to \mathscr{F}(Y)$, where $Y$ is a completely metrizable
  space with $w(Y)\leq\tau$. Next, let $Y^*$ be the space obtained
  from $Y$ by adding an isolated point $*$ to $Y$. We may now define
  another l.s.c.\ mapping $\Phi^*:X\to \mathscr{F}(Y^*)$ by
  $\Phi^*(x)=\Phi(x)\cup\{*\}$, $x\in X$. Also, let
  $\theta:X\to \mathscr{C}(Y^*)$ be the constant mapping
  $\theta(x)=\{*\}$, $x\in X$. Then $\theta$ is a u.s.c.\ selection
  for $\Phi^*$ with
  $\Phi^*(x)\setminus \theta(x)=\Phi(x)\neq \emptyset$, $x\in
  X$. Hence, by \ref{item:choban-sel-v8:2}, $\Phi^*$ admits a u.s.c.\
  selection $\psi:X\to \mathscr{C}(Y^*)$ such that
  $\varphi(x)=\psi(x)\setminus\theta(x)\neq\emptyset$ for every
  $x\in X$. Evidently, this defines a u.s.c.\ selection
  $\varphi:X\to \mathscr{C}(Y)$ for $\Phi$. According to \cite[Theorem
  11.2]{zbMATH03366111}, $X$ is normal and $\tau$-paracompact. 
\end{proof}

Proposition \ref{proposition-choban-sel-v8:1} is complementary to
similar characterisations of $\tau$-expandable spaces and
$\tau$-collectionwise normal spaces obtained in
\cite{miyazaki:01a}. Regarding this proposition, let us also remark
that upper semi-continuity of the intermediate selection $\psi$ cannot
be strengthened to continuity. In fact, in this case, continuity of
the intermediate selection is equivalent to $X$ having a covering
dimension zero.

\begin{proposition}
  \label{proposition-choban-sel-v8:2}
  For an infinite cardinal number $\tau$ and a space $X$, the
  following conditions are equivalent\textup{:}
  \begin{enumerate}
  \item\label{item:choban-sel-v8:3} $X$ is a 
    $\tau$-paracompact normal space with $\dim(X)=0$. 
  \item\label{item:choban-sel-v8:4} If $Y$ is a completely metrizable
    space with $w(Y)\leq\tau$, $\Phi:X\to \mathscr{F}(Y)$ is l.s.c.,
    $\theta:X\to \mathscr{C}(Y)$ is u.s.c.\ and $\theta<\Phi$, then
    there exists a continuous mapping $\varphi:X\to \mathscr{C}(Y)$
    with $\theta<\varphi\leqslant\Phi$.
  \end{enumerate}  
\end{proposition}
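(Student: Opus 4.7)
My plan is to derive \ref{item:choban-sel-v8:3}$\Rightarrow$\ref{item:choban-sel-v8:4} from Theorem~\ref{theorem-choban-sel-v17:2} via Proposition~\ref{proposition-choban-sel-v8:1}, and to obtain \ref{item:choban-sel-v8:4}$\Rightarrow$\ref{item:choban-sel-v8:3} by an explicit three-point construction that converts a continuous intermediate selection into a clopen separation.

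For the forward direction I would first apply Proposition~\ref{proposition-choban-sel-v8:1} to replace $\theta$ by a u.s.c.\ selection $\psi\colon X\to\mathscr{C}(Y)$ with $\theta<\psi\leqslant\Phi$, so that it suffices to insert a continuous selection between $\psi$ and $\Phi$. To apply Theorem~\ref{theorem-choban-sel-v17:2} I need $\mathscr{C}[\psi,\Phi]$ to be $H(\rho)$-l.s.f. The plan is first to check that $\mathscr{C}[\psi,\Phi]$ is l.s.c.\ by verifying that, for a basic Vietoris open $\langle V_1,\dots,V_n;U\rangle$ in $\mathscr{C}(Y)$,
\[
\mathscr{C}[\psi,\Phi]^{-1}(\langle V_1,\dots,V_n;U\rangle) = \psi^{\#}(U)\cap\bigcap_{i=1}^{n}\Phi^{-1}(V_i\cap U),
\]
which is open by u.s.c.\ of $\psi$ and l.s.c.\ of $\Phi$; the nontrivial inclusion uses that any $x$ in the right-hand side admits the witness $\psi(x)\cup\{y_1,\dots,y_n\}$ with $y_i\in\Phi(x)\cap V_i\cap U$. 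Since $w(\mathscr{C}(Y))\leq\tau$ and $X$ is normal and $\tau$-paracompact, \cite[Example~2.6]{gutev:98} then upgrades this l.s.c.\ property to $H(\rho)$-l.s.f., exactly as in Proposition~\ref{proposition-Icsvm-vgg-rev:1}. Theorem~\ref{theorem-choban-sel-v17:2} finally supplies a continuous $\varphi\colon X\to\mathscr{C}(Y)$ with $\psi\leqslant\varphi\leqslant\Phi$, and thus $\theta<\varphi\leqslant\Phi$.

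For the converse, continuous mappings are u.s.c., so \ref{item:choban-sel-v8:4} immediately gives condition \ref{item:choban-sel-v8:2} of Proposition~\ref{proposition-choban-sel-v8:1}, whence $X$ is normal and $\tau$-paracompact. For $\dim(X)=0$, given disjoint closed $A,B\subseteq X$ I would take $Y=\{0,1,2\}$ with the discrete metric (complete, $w(Y)\leq\tau$), set $\theta(x)=\{0\}$ (constant, hence u.s.c.), and define $\Phi(x)=\{0,1\}$ on $A$, $\Phi(x)=\{0,2\}$ on $B$, and $\Phi(x)=\{0,1,2\}$ on $X\setminus(A\cup B)$. A direct check gives $\Phi$ l.s.c.\ (e.g., $\Phi^{-1}(\{1\})=X\setminus B$ and $\Phi^{-1}(\{2\})=X\setminus A$) and $\theta<\Phi$. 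Applying \ref{item:choban-sel-v8:4} yields a continuous $\varphi\colon X\to\mathscr{C}(Y)$ with $\theta<\varphi\leqslant\Phi$, and the constraints $0\in\varphi(x)$, $\varphi(x)\neq\{0\}$, $\varphi(x)\subseteq\Phi(x)$ rigidly force $\varphi(x)=\{0,1\}$ on $A$ and $\varphi(x)=\{0,2\}$ on $B$. Since $Y$ is finite and discrete, $\mathscr{C}(Y)$ is discrete in the Hausdorff topology, so $\varphi^{-1}(\{0,1\})$ is clopen, contains $A$, and is disjoint from $B$, giving the required clopen separation.

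The main obstacle will be the l.s.c.\ verification of $\mathscr{C}[\psi,\Phi]$ in the forward direction, since the Michael-type lemma \cite[Lemma~5.3]{michael:04} invoked in Proposition~\ref{proposition-Icsvm-vgg-rev:1} is designed for the growth hyperspace of connected sets and is not directly available here. The saving observation is that, in the absence of the connectedness constraint, single points of $\Phi(x)\cap V_i\cap U$ can simply be adjoined to $\psi(x)$, so the Vietoris preimage collapses to an intersection of sets that are open for standard semi-continuity reasons.
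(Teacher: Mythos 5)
Your proposal is correct and follows essentially the same route as the paper: both directions first invoke Proposition \ref{proposition-choban-sel-v8:1} (equivalently Proposition \ref{proposition-choban-sel-v6:1}) to get normality, $\tau$-paracompactness and the u.s.c.\ mapping $\psi$ with $\theta<\psi\leqslant\Phi$, then pass to the l.s.c.\ hyperspace mapping $\mathscr{C}[\psi,\Phi]$ (the paper cites \cite[Example 3.11]{gutev:00b} for the lower semi-continuity you verify by hand), and both use the same three-point space $Y=\{0,1,2\}$ with the constant mapping $\theta(x)=\{0\}$ for the converse. The only cosmetic difference is that the paper applies the zero-dimensional selection theorem \cite[Theorem 11.1]{zbMATH03366111} (see also \cite[Theorem 2]{michael:56}) directly to the l.s.c.\ mapping $\mathscr{C}[\psi,\Phi]$, whereas you detour through the $H(\rho)$-l.s.f.\ upgrade and Theorem \ref{theorem-choban-sel-v17:2}; both are valid.
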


\begin{proof}
  To show that
  \ref{item:choban-sel-v8:3}$\implies$\ref{item:choban-sel-v8:4}, let
  $Y$ be a completely metrizable space with $w(Y)\leq \tau$,
  $\Phi:X\to \mathscr{F}(Y)$ be l.s.c.\ and
  $\theta:X\to \mathscr{C}(Y)$ be u.s.c.\ with $\theta<\Phi$. By
  Proposition \ref{proposition-choban-sel-v6:1}, there exists a
  u.s.c.\ mapping $\psi:X\to \mathscr{C}(Y)$ such that
  $\theta<\psi\leqslant \Phi$. Next, consider the mapping
  $\mathscr{C}[\psi,\Phi]:X\to \mathscr{F}(\mathscr{C}(Y))$ which is
  l.s.c., see \cite[Example 3.11]{gutev:00b}. Since
  $w(\mathscr{C}(Y))\leq \tau$, it follows from \cite[Theorem
  11.1]{zbMATH03366111}, see also \cite[Theorem 2]{michael:56}, that
  $\mathscr{C}[\psi,\Phi]$ has a continuous selection
  $\varphi:X\to \mathscr{C}(Y)$. According to
  \eqref{eq:choban-sel-v8:1}, this $\varphi$ is as required. To show
  the converse, assume that \ref{item:choban-sel-v8:4} holds. Then by
  Proposition \ref{proposition-choban-sel-v8:1}, $X$ is normal and
  $\tau$-paracompact. To show that $\dim(X)=0$, take disjoint closed
  sets $F_1,F_2\subseteq X$ and, for convenience, set $Y=\{0,1,2\}$.
  Next, define an l.s.c.\ mapping $\Phi:X\to 2^Y$ by $\Phi(x)=\{0,i\}$
  if $x\in F_i$, and $\Phi(x)=Y$ otherwise. Also, just like before,
  consider the constant mapping $\theta(x)=\{0\}$, $x\in X$. Then
  $\theta<\Phi$ and by \ref{item:choban-sel-v8:4}, there exists a
  continuous mapping $\varphi:X\to 2^Y$ with
  $\theta<\varphi\leqslant \Phi$. In particular,
  $\varphi(x)\neq \{0\}$, $x\in X$, and we may define the disjoint
  sets $U_i=\varphi^{\#}(\{0,i\})$, $i=1,2$. Since $Y$ is discrete and
  $\varphi$ is continuous, $U_1$ and $U_2$ are clopen sets. Moreover,
  by the definition of $\Phi$, we have that $F_i\subseteq U_i$,
  $i=1,2$. This shows that $\dim(X)=0$.
\end{proof}

In setting of paracompact spaces, the implication
\ref{item:choban-sel-v8:3}$\implies$~\ref{item:choban-sel-v8:4} of
Proposition \ref{proposition-choban-sel-v8:2} was stated by
Nepomnyashchii \cite{nepomnyashchii:86}.

\section{Collectionwise Normality and ``Metric''-s.l.s.f.  Mappings}
\label{sec:exampl-strongly-lowe}

The proofs of Theorems \ref{theorem-choban-sel-v6:2} and
\ref{theorem-choban-sel-v6:3} are similar to that of Theorem
\ref{theorem-choban-sel-v6:1} being based on a reduction to Theorem
\ref{theorem-choban-sel-v17:1}. However, in the setting of
$\mathscr{C}'(Y)$-valued mappings, there is no simple version of
Proposition \ref{proposition-Icsvm-vgg-rev:1}. The reason is that for
$\Phi:X\to \mathscr{C}'(Y)$ and a selection
$\theta:X\to \mathscr{C}(Y)$ for $\Phi$, the associated mapping
$\mathscr{C}[\theta,\Phi]$ in \eqref{eq:choban-sel-v8:1} is not
necessarily $\mathscr{C}'(\mathscr{C}(Y))$-valued; similarly, for the
construction in \eqref{eq:choban-sel:1}. This requires an extra
property of the selection $\theta:X\to \mathscr{C}(Y)$.

\begin{theorem}
  \label{theorem-Icsvm-vgg-rev:1}
  Let $X$ be $\tau$-collectionwise normal, $(Y,\rho)$ be a connected
  metric space with $w(Y)\leq\tau$, and
  $\Phi:X\to \mathscr{C}'_\mathbf{c}(Y)$ be l.s.c.\ such that
  $\{\Phi(x): x\in X\}$ is uniformly equi-$LC^0$. Also, let $Z$ be a
  metrizable space, $g:X\to Z$ be continuous and
  $\Theta:Z\to \mathscr{C}(Y)$ be u.s.c.\ such that
  $\psi=\Theta\circ g\leqslant \Phi$.  Then the mapping
  $\mathscr{C}_\mathbf{c}[\psi,\Phi]:X\to
  \mathscr{F}_\mathbf{c}(\mathscr{C}_\mathbf{c}(Y))$ is
  $H(\rho)$-s.l.s.f.
\end{theorem}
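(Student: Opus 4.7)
My strategy is to verify directly that $\mathscr{C}_\mathbf{c}[\psi,\Phi]$ is $H(\rho)$-s.l.s.f. Well-definedness of $\mathscr{C}_\mathbf{c}[\psi,\Phi]$ as an $\mathscr{F}_\mathbf{c}(\mathscr{C}_\mathbf{c}(Y))$-valued mapping has already been recorded in the paragraph between \eqref{eq:choban-sel-v8:1} and \eqref{eq:choban-sel:1}: uniform equi-$LC^0$ supplies local path-connectedness of each $\Phi(x)$ (via short paths inside $\Phi(x)$ joining close enough points), and connectedness of $Y$ covers the only non-compact value $\Phi(x)=Y$ in $\mathscr{C}'_\mathbf{c}(Y)$.

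Next, fix a closed $F\subseteq X$, an $\varepsilon>0$, and a selection $\sigma:F\to \mathscr{C}(\mathscr{C}_\mathbf{c}(Y))$ for $\mathscr{C}_\mathbf{c}[\psi,\Phi]\uhr F$, and choose $\delta=\delta(\varepsilon/3)>0$ from the equi-$LC^0$ modulus of $\{\Phi(x):x\in X\}$, with $\delta<\varepsilon/3$. The first ingredient is the factorization $\psi=\Theta\circ g$: since $\Theta$ is u.s.c.\ and compact-valued and $Z$ is metrizable (hence paracompact), one extracts a locally finite cozero-set cover $\{V_\alpha\}_{\alpha\in A}$ of $Z$ on each of which $\Theta$ has $H(\rho)$-oscillation below $\delta/3$. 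Pulling back, $\mathscr{V}=\{g^{-1}(V_\alpha)\cap F\}_{\alpha\in A}$ is a locally finite cozero-set cover of $F$ on which $\psi$ itself has oscillation below $\delta/3$.

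The second ingredient controls $\Phi$ along $\sigma$. Fix a base $\mathscr{B}$ of $Y$ with $|\mathscr{B}|\leq w(Y)\leq\tau$. For each $y\in F$, compactness of $\sigma(y)$ in $(\mathscr{C}_\mathbf{c}(Y),H(\rho))$ combined with l.s.c.\ of $\Phi$ shows that, after picking a finite $H(\rho)$-net in $\sigma(y)$, the intersection
\[
  W_y=\bigcap_{K\in \text{net}}\{x\in X:K\subseteq \mathbf{O}^\rho_{\delta/3}(\Phi(x))\}
\]
is an open neighbourhood of $y$ on which every $K\in\sigma(y)$ still satisfies $K\subseteq \mathbf{O}^\rho_{2\delta/3}(\Phi(x))$. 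These finite nets can be encoded by finite subsets of $\mathscr{B}$, giving a natural cover of $F$ indexed by $[\mathscr{B}]^{<\omega}\times A$ and hence of cardinality $\leq\tau$. Here I would invoke $\tau$-collectionwise normality of $X$ to realise this parametrisation by a locally finite cozero-set family $\mathscr{U}$ of cardinality $\leq\tau$ covering $F$, along with a choice map $\pi:\mathscr{U}\to F$. For $x\in U\in\mathscr{U}$ and $K\in\sigma(\pi(U))$ the required $K'\in\mathscr{C}_\mathbf{c}[\psi,\Phi](x)$ with $H(\rho)(K,K')<\varepsilon$ is produced by approximating each point of $K$ by a nearby point of $\Phi(x)$ and re-connecting the approximants, together with $\psi(x)$, by uniform equi-$LC^0$ paths inside $\Phi(x)$ of diameter $<\varepsilon/3$.

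The main obstacle is the passage from the natural $[\mathscr{B}]^{<\omega}\times A$-indexed open cover to a locally finite cozero refinement: in a $\tau$-collectionwise normal space that is not paracompact, open covers need not admit locally finite refinements in general, so one must first shrink the data to a discrete closed family indexed by tuples from $[\mathscr{B}]^{<\omega}\times A$ before applying the expansion property that defines $\tau$-collectionwise normality. This is precisely where the metric factorization $\psi=\Theta\circ g$ is indispensable: it replaces the missing paracompactness of $X$ with paracompactness of $Z$ and combines the $\Phi$-data with $\mathscr{V}$ in a form (parametrised by combinatorial objects on $Y$) amenable to the weaker $\tau$-collectionwise normality hypothesis.
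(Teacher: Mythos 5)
Your plan runs into a genuine gap at exactly the point you flag in your last paragraph, and the remark that the factorization $\psi=\Theta\circ g$ ``replaces the missing paracompactness of $X$ with paracompactness of $Z$'' does not close it. The cover you build in your second ingredient --- the sets $W_y$ (or their $[\mathscr{B}]^{<\omega}\times A$-indexed reparametrisation) --- is governed by the condition $K\subseteq \mathbf{O}^\rho_{\delta/3}(\Phi(x))$, and $\Phi$ lives on $X$, not on $Z$; these open sets are not pullbacks under $g$, so the paracompactness of $Z$ gives you no handle on them. What remains is an arbitrary open cover of $F$ of cardinality $\leq\tau$, and $\tau$-collectionwise normality alone does not produce a locally finite open refinement of such a cover --- that is precisely $\tau$-paracompactness, which is strictly stronger. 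You say one should ``first shrink the data to a discrete closed family,'' but you never exhibit such a shrinking, and for an arbitrary open cover none exists.

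The paper's resolution is a dichotomy you are missing. First it treats the constant mapping $\Psi(x)=Y$: here the condition $\sigma(\pi(V))\subseteq \mathbf{O}^{H(\rho)}_\varepsilon(\mathscr{C}_\mathbf{c}[\psi,\Psi](x))$ involves only $\psi=\Theta\circ g$, so everything genuinely factors through the paracompact space $Z$ (Proposition \ref{proposition-choban-sel-v4:3}). This yields a locally finite family $\mathscr{U}_0$ which, by construction, covers every point where $\Phi(x)=Y$, since at such points $\mathscr{C}_\mathbf{c}[\psi,\Phi](x)=\mathscr{C}_\mathbf{c}[\psi,\Psi](x)$. Hence $\Phi$ is \emph{compact-valued} on the closed remainder $F_1=F\setminus\bigcup\mathscr{U}_0$, and there a different mechanism applies: for compact-valued l.s.c.\ mappings on $\tau$-PF-normal spaces one has Nedev's Selection Factorisation Property (Proposition \ref{proposition-Icsvm-vgg-rev:2}), together with the point-finite expansion lemma of Proposition \ref{proposition-choban-sel-v5:2}. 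It is the compactness of the point-images of $\Phi$ on $F_1$ --- not the compactness of the values $\sigma(y)$ of the hyperspace selection, which is what your net argument uses --- that makes the weaker normality-type hypothesis suffice. Without splitting off the locus $\{\Phi(x)=Y\}$ first, your direct verification has no way to exploit either paracompactness of $Z$ or compactness of $\Phi(x)$ where each is actually available.
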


The proof of Theorem \ref{theorem-Icsvm-vgg-rev:1} is separated into
two parts --- the case of the constant mapping $\Phi(x)=Y$, $x\in X$,
and that of $\Phi(x)$ being compact for each $x\in X$.

\begin{proposition}
  \label{proposition-choban-sel-v4:3}
  Under the conditions of Theorem \ref{theorem-Icsvm-vgg-rev:1},
  suppose that $\Phi(x)=Y$ for every $x\in X$.  Then
  $\mathscr{C}_\mathbf{c}[\psi,\Phi]:X\to
  \mathscr{F}_\mathbf{c}(\mathscr{C}_\mathbf{c}(Y))$ is
  $H(\rho)$-s.l.s.f.
\end{proposition}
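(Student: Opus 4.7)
My plan is to verify the $H(\rho)$-s.l.s.f.\ condition directly for $\mathscr{C}_\mathbf{c}[\psi,\Phi]$, exploiting that when $\Phi(x)=Y$ there is no upper constraint on elements of $\mathscr{C}_\mathbf{c}[\psi,\Phi](x)$---only the lower constraint $\psi(x)\subseteq S'$---which grants substantial flexibility. The key geometric tool is a \emph{path-connected hull lemma}: given $\varepsilon>0$, using the uniform equi-$LC^0$ property of $\{Y\}$, fix $\delta\in(0,\varepsilon/3)$ such that any two points of $Y$ within $\delta$ can be joined by a path of diameter $<\varepsilon/3$. Then for every $S\in\mathscr{C}_\mathbf{c}(Y)$ and every compact $K\subseteq\mathbf{O}^\rho_\delta(S)$, there is $S'\in\mathscr{C}_\mathbf{c}(Y)$ with $S\cup K\subseteq S'\subseteq\mathbf{O}^\rho_\varepsilon(S)$, giving $H(\rho)(S,S')<\varepsilon$; such $S'$ is produced by covering $K$ with finitely many $\delta$-balls centred at points of $S$ and attaching short joining paths.

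To verify the definition, fix a closed $F\subseteq X$, a selection $\sigma:F\to\mathscr{C}(\mathscr{C}_\mathbf{c}(Y))$ for $\mathscr{C}_\mathbf{c}[\psi,\Phi]\uhr F$, and $\varepsilon>0$. Choose a base $\mathscr{B}_Y$ for $Y$ with $|\mathscr{B}_Y|\leq w(Y)\leq\tau$ consisting of sets of diameter $<\delta/2$. For each finite $\mathcal{B}'\subseteq\mathscr{B}_Y$, the set $\widetilde{U}_{\mathcal{B}'}:=\Theta^{\#}(\bigcup\mathcal{B}')$ is open in $Z$ by u.s.c.\ of $\Theta$. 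Call $\mathcal{B}'$ \emph{admissible} if there exists $x_0\in F$ with $\psi(x_0)\subseteq\bigcup\mathcal{B}'$ and $B\cap\psi(x_0)\neq\varnothing$ for every $B\in\mathcal{B}'$; for each admissible $\mathcal{B}'$ fix such a witness $x_{\mathcal{B}'}$. Since every compact $\psi(x)$ has a finite cover by elements of $\mathscr{B}_Y$ of diameter $<\delta/2$ each meeting $\psi(x)$, the family $\{g^{-1}(\widetilde{U}_{\mathcal{B}'})\cap F:\mathcal{B}'\text{ admissible}\}$ is an open cover of $F$, of cardinality at most $|[\mathscr{B}_Y]^{<\omega}|\leq\tau$.

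Since $Z$ is paracompact (being metrisable), the $Z$-cover $\{\widetilde{U}_{\mathcal{B}'}:\mathcal{B}'\text{ admissible}\}$ admits a locally finite open refinement of cardinality $\leq\tau$ (in any paracompact space, an open cover of cardinality $\kappa$ admits such a refinement by grouping refinement elements according to the original cover). Pulling back via the continuous map $g$ preserves local finiteness, and since every open set in a metrisable space is cozero and continuous preimages of cozero sets are cozero, this yields a locally finite cozero cover $\mathscr{U}$ of $F$ of cardinality $\leq\tau$. For each $U\in\mathscr{U}$ choose admissible $\mathcal{B}'(U)$ with $U\subseteq g^{-1}(\widetilde{U}_{\mathcal{B}'(U)})$ and set $\pi(U):=x_{\mathcal{B}'(U)}$. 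For $x\in U$ and any $S\in\sigma(\pi(U))$: $x\in g^{-1}(\widetilde{U}_{\mathcal{B}'(U)})$ gives $\psi(x)\subseteq\bigcup\mathcal{B}'(U)$; admissibility of $\mathcal{B}'(U)$ gives $\bigcup\mathcal{B}'(U)\subseteq\mathbf{O}^\rho_{\delta/2}(\psi(\pi(U)))$; and $\psi(\pi(U))\subseteq S$ yields $\mathbf{O}^\rho_{\delta/2}(\psi(\pi(U)))\subseteq\mathbf{O}^\rho_{\delta/2}(S)\subseteq\mathbf{O}^\rho_\delta(S)$. The hull lemma then supplies $S'\in\mathscr{C}_\mathbf{c}(Y)$ with $\psi(x)\subseteq S'$ and $H(\rho)(S,S')<\varepsilon$; because $\Phi(x)=Y$ imposes no upper bound, $S'\in\mathscr{C}_\mathbf{c}[\psi,\Phi](x)$, completing the verification.

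The main obstacle is to reconcile the cardinality bound with the pointwise admissibility, specifically ensuring that each $x\in F$ falls inside some $g^{-1}(\widetilde{U}_{\mathcal{B}'})$ whose chosen witness $x_{\mathcal{B}'}$ still delivers the required $\delta/2$-approximation after the refinement step. The role of $\tau$-collectionwise normality of $X$ may enter tangentially here---if bare paracompactness of $Z$ is insufficient for the cardinality control, one can decompose the $Z$-cover into discrete pieces via a $\sigma$-discrete base of $Z$ and transfer these via $g$ to $X$, separating the resulting discrete families of closed preimages by discrete open families of cardinality $\leq\tau$ using collectionwise normality.
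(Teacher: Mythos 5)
Your overall architecture is sound and is, in essence, an unpacked version of what the paper does in two lines: the paper sets $\Psi(z)=Y$ for $z\in Z$, observes that $g(F)$ is paracompact because it is metrizable, applies Proposition \ref{proposition-Icsvm-vgg-rev:1} to $\mathscr{C}_\mathbf{c}[\Theta,\Psi]\uhr g(F)$, and transfers the resulting cover back to $F$ through $g$ using the identity $\mathscr{C}_\mathbf{c}[\psi,\Phi](x)=\mathscr{C}_\mathbf{c}[\Theta,\Psi](g(x))$. Your cover of $F$ by the sets $g^{-1}\big(\Theta^{\#}(\bigcup\mathcal{B}')\big)\cap F$ and its locally finite cozero refinement is exactly a hands-on realisation of this reduction. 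Your closing worry is unfounded: grouping a locally finite open refinement of $\{\widetilde U_{\mathcal{B}'}\}$ in the metrizable (hence paracompact) space $g(F)$ according to the original indices preserves both the cardinality bound $\leq\tau$ and the containments $W_{\mathcal{B}'}\subseteq\widetilde U_{\mathcal{B}'}$, so every $x\in g^{-1}(W_{\mathcal{B}'})\cap F$ still satisfies $\psi(x)\subseteq\bigcup\mathcal{B}'\subseteq\mathbf{O}^\rho_{\delta/2}(\psi(x_{\mathcal{B}'}))$; no collectionwise normality of $X$ enters here at all --- that hypothesis is reserved for the compact-valued half of the proof of Theorem \ref{theorem-Icsvm-vgg-rev:1} (Propositions \ref{proposition-Icsvm-vgg-rev:2} and \ref{proposition-choban-sel-v5:2}).

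The genuine gap is in your ``path-connected hull lemma''. As described --- pick, for each point $y$ of the compact set $K=\psi(x)$, a nearby point of $S$ and a short joining path, then take the union --- the resulting set is connected and lies in $\mathbf{O}^\rho_\varepsilon(S)$, but it is a union of infinitely many paths and there is no reason for it, or its closure, to be compact; note that in Theorem \ref{theorem-Icsvm-vgg-rev:1} the space $(Y,\rho)$ is \emph{not} assumed complete, so total boundedness of the union does not save compactness of its closure. Producing a genuinely \emph{compact} connected $S'$ with $S\cup K\subseteq S'\subseteq\mathbf{O}^\rho_\varepsilon(S)$ is precisely the nontrivial content of \cite[Lemmas 1.3 and 1.4]{curtis:80} and \cite[Lemma 5]{nepomnyashchii:86} (equivalently, of \cite[Lemma 5.3]{michael:04}, which is what Proposition \ref{proposition-Icsvm-vgg-rev:1} invokes to obtain lower semi-continuity of $\mathscr{C}_\mathbf{c}[\theta,\Phi]$). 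So the statement you need is available under the standing hypotheses, but your one-line justification does not establish compactness, and this is the step carrying all the geometric weight of the proposition; either cite these results or supply a genuine construction in their place.
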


\begin{proof}
  Let $Z$, $\Theta:Z\to \mathscr{C}(Y)$, $g:X\to Z$ and
  $\psi=\Theta\circ g:X\to \mathscr{C}(Y)$ be as in Theorem
  \ref{theorem-Icsvm-vgg-rev:1}. Set $\Psi(z)=Y$, $z\in Z$, and
  consider the associated mapping
  $\mathscr{C}_\mathbf{c}[\Theta,\Psi]$ defined as in
  \eqref{eq:choban-sel:1}. If $F\subseteq X$, then $g(F)$ is
  paracompact being metrizable. Hence, by Proposition
  \ref{proposition-Icsvm-vgg-rev:1},
  $\mathscr{C}_\mathbf{c}[\Theta,\Psi]\uhr g(F)$ is
  $H(\rho)$-s.l.s.f.\ for every $F\subseteq X$. Since
  $\mathscr{C}_\mathbf{c}[\psi,\Phi](x)=\mathscr{C}_\mathbf{c}[\Theta,\Psi](g(x))$
  for every $x\in X$, this implies that
  $\mathscr{C}_\mathbf{c}[\psi,\Phi]$ is also $H(\rho)$-s.l.s.f.
\end{proof}

It will be useful to state the case of compact-valued mappings in a
little bit more general setting. To this end, let us recall that for
an infinite cardinal number $\tau$, a space $X$ is called
\emph{$\tau$-PF-normal} (see \cite{smith:72}) if every point-finite
open cover of $X$ of cardinality $\leq \tau$ is normal.  Every
$\tau$-collectionwise normal space is $\tau$-PF-normal
\cite{michael:55} (see also Kand\^o \cite{MR0063648} and Nedev
\cite{MR644284}), and $\omega$-PF-normality coincides with normality
\cite{morita:48}.  However, PF-normality is neither identical to
collectionwise normality (see Bing's example \cite{bing:51} and
\cite[Example 1]{michael:55}), nor to normality (\cite[Example
1]{michael:55}). For some properties of PF-normal spaces, the
interested reader is referred to \cite[Section 3]{MR1961298} and
\cite{michael:55}.

\begin{proposition}
  \label{proposition-Icsvm-vgg-rev:2}
  Let $X$ be $\tau$-PF-normal, $(Y,\rho)$ be a metric space with
  $w(Y)\leq\tau$, and $\Phi:X\to \mathscr{C}_\mathbf{c}(Y)$ be l.s.c.\
  such that $\{\Phi(x): x\in X\}$ is uniformly equi-$LC^0$. If
  $\psi:X\to \mathscr{C}(Y)$ is a u.s.c.\ selection for $\Phi$, then
  $\mathscr{C}_\mathbf{c}[\psi,\Phi]:X\to
  \mathscr{F}_\mathbf{c}(\mathscr{C}_\mathbf{c}(Y))$ is
  $H(\rho)$-s.l.s.f.
\end{proposition}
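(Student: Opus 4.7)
The plan is to mirror the derivation of Proposition~\ref{proposition-Icsvm-vgg-rev:1}, trading $\tau$-paracompactness of $X$ for the combination of compact-valuedness of $\Phi$ with the weaker $\tau$-PF-normality of $X$. First, I would check that each value $\mathscr{C}_\mathbf{c}[\psi,\Phi](x)$ is a \emph{compact} subset of $\mathscr{C}_\mathbf{c}(Y)$: indeed, since $\Phi(x)\in \mathscr{C}_\mathbf{c}(Y)$ is compact, the hyperspace $\mathscr{C}_\mathbf{c}(\Phi(x))$ is $H(\rho)$-compact, and $\mathscr{C}_\mathbf{c}[\psi,\Phi](x)$ is its closed subcollection consisting of those members that contain the fixed compact set $\psi(x)$ (closedness uses that $H(\rho)$-limits preserve inclusion of a fixed compact set). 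Together with the uniform equi-$LC^0$ property and the fact that $\psi$ is u.s.c.\ and $\Phi$ is l.s.c., Michael's hyperspace lemma \cite[Lemma~5.3]{michael:04} then yields that $\mathscr{C}_\mathbf{c}[\psi,\Phi]:X\to \mathscr{F}_\mathbf{c}(\mathscr{C}_\mathbf{c}(Y))$ is l.s.c.

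Having reduced matters to a compact-valued l.s.c.\ mapping $\Theta=\mathscr{C}_\mathbf{c}[\psi,\Phi]$ into the space $\mathscr{C}_\mathbf{c}(Y)$ of weight $\leq\tau$, I would verify the $H(\rho)$-s.l.s.f.\ property along the template of \cite[Example~2.6]{gutev:98}, with $\tau$-PF-normality replacing $\tau$-paracompactness. Fix a closed $F\subseteq X$, an $\varepsilon>0$, and a selection $\sigma:F\to \mathscr{C}(\mathscr{C}_\mathbf{c}(Y))$ for $\Theta\uhr F$. Choose a base $\mathscr{B}$ of $\mathscr{C}_\mathbf{c}(Y)$ of cardinality $\leq\tau$ whose members have $H(\rho)$-diameter $<\varepsilon/2$, and for each finite $\mathscr{D}\subseteq \mathscr{B}$ set
\[
  G_\mathscr{D}=\bigcap_{D\in \mathscr{D}}\Theta^{-1}(D),
\]
which is open in $X$ by the l.s.c.\ of $\Theta$. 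Since each $\sigma(x)$ is $H(\rho)$-compact, for every $x\in F$ there is a finite $\mathscr{D}_x\subseteq \mathscr{B}$ with $\sigma(x)\subseteq \bigcup\mathscr{D}_x$, each $D\in\mathscr{D}_x$ meeting $\sigma(x)\subseteq \Theta(x)$; hence $x\in G_{\mathscr{D}_x}$, and the triangle inequality gives $\sigma(x)\subseteq \mathbf{O}_\varepsilon^{H(\rho)}(\Theta(y))$ for every $y\in G_{\mathscr{D}_x}$. Thus setting $\pi(G_{\mathscr{D}_x})=x$ already satisfies the $\varepsilon$-approximation clause in the definition of s.l.s.f.

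The remaining difficulty, which I expect to be the main obstacle, is to upgrade the open cover $\{G_{\mathscr{D}_x}\}_{x\in F}\cup \{X\setminus F\}$ of $X$ (of cardinality $\leq \tau$) to a locally finite cozero cover of $F$ of the same cardinality. Here the compact-valuedness of $\Theta$ is decisive: at each point only finitely many basic ``intersection patterns'' in $\mathscr{B}$ are needed to witness the value of $\Theta$, which permits one to pass to an inclusion-minimal, and hence point-finite, open refinement indexed by finite subsets of $\mathscr{B}$. At this stage the hypothesis that $X$ is $\tau$-PF-normal \cite{smith:72} applies directly: every point-finite open cover of cardinality $\leq\tau$ is normal, so it admits a locally finite cozero refinement, whose restriction to $F$ together with the map $\pi$ inherited from the enclosing $G_{\mathscr{D}_x}$'s provides the required witness for $H(\rho)$-s.l.s.f.
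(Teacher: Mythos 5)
Your reduction to the compact-valued l.s.c.\ mapping $\Theta=\mathscr{C}_\mathbf{c}[\psi,\Phi]$ via \cite[Lemma 5.3]{michael:04} is sound, and your verification of the $\varepsilon$-approximation clause is fine. The genuine gap is exactly where you place it, and your proposed resolution does not work. If $\mathscr{B}$ is merely a \emph{base} of $\mathscr{C}_\mathbf{c}(Y)$, the family $\{G_{\mathscr{D}}:\mathscr{D}\in[\mathscr{B}]^{<\omega}\}$ is badly non-point-finite: a point $y$ lies in $G_{\mathscr{D}}$ for \emph{every} finite collection $\mathscr{D}$ of basic sets each meeting the compact set $\Theta(y)$, and there are typically infinitely many such collections. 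Passing to an ``inclusion-minimal'' subfamily is not a mechanism that produces point-finiteness (minimal members under inclusion can still pile up at a point), and the slogan that ``only finitely many intersection patterns witness the value of $\Theta$'' conflates covering $\Theta(y)$ with meeting $\Theta(y)$; the set $\{x:\Theta(x)\subseteq\bigcup\mathscr{D}\}$ is not open for an l.s.c.\ $\Theta$, so you cannot restrict to covering patterns either. The correct move is to replace the base by a \emph{locally finite} open cover $\mathscr{V}$ of $E=\mathscr{C}_\mathbf{c}(Y)$ by sets of $H(\rho)$-diameter $<\varepsilon$ (available since $E$ is metrizable, hence paracompact, and of weight $\leq\tau$, so $|\mathscr{V}|\leq\tau$). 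Then each compact $\Theta(y)$ meets only finitely many members of $\mathscr{V}$, so $y$ belongs to $G_{\mathscr{D}}$ for only finitely many $\mathscr{D}\in[\mathscr{V}]^{<\omega}$; this is where compact-valuedness of $\Phi$ actually enters, and it yields the point-finite open cover of cardinality $\leq\tau$ to which $\tau$-PF-normality can be applied. With that repair your argument goes through.

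For comparison, the paper does not argue directly at all: it passes to the second-level hyperspace mapping $\mathscr{C}[\Psi]$ of \eqref{eq:choban-sel-v13:1} (which is compact-valued and l.s.c.\ by \cite[Lemma 3.4$'$]{nepomnyashchii:85}), invokes Nedev's result \cite[Lemma 3.5]{MR644284} that compact-valued l.s.c.\ mappings on $\tau$-PF-normal (i.e.\ $\tau^+$-pointwise-$\aleph_0$-paracompact) spaces have the Selection Factorisation Property, deduces that $\mathscr{C}[\Psi]$ is $H(d)$-l.s.f.\ via \cite[Example 4.2]{gutev:00a}, and then descends to the s.l.s.f.\ property of $\Psi$ by Proposition \ref{proposition-choban-sel-v13:3}. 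The combinatorial difficulty you ran into is precisely what is outsourced to Nedev's lemma there; a corrected version of your direct argument would in effect reprove that lemma in this special case.
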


\begin{proof}
  Set $E=\mathscr{C}_\mathbf{c}(Y)$ and $d=H(\rho)$. Then $(E,d)$ is a
  metric space with ${w(E)\leq \tau}$. Moreover, by \cite[Lemma
  5.3]{michael:04},
  $\Psi=\mathscr{C}_\mathbf{c}[\psi,\Phi]:X\to
  \mathscr{C}_\mathbf{c}(E)$ is l.s.c. Consider the mapping
  $\mathscr{C}[\Psi]$ defined as in \eqref{eq:choban-sel-v13:1}, which
  remains l.s.c., see \cite[Lemma 3.4$'$]{nepomnyashchii:85}. In
  Nedev's terminology \cite{MR644284}, $X$ is $\tau$-PF-normal
  precisely when it is $\tau^+$-pointwise-$\aleph_0$-paracompact
  space. Hence, by \cite[Lemma 3.5]{MR644284}, $\mathscr{C}[\Psi]$ has
  the \emph{Selection Factorisation Property} in the sense of
  \cite{choban-nedev:74,MR644284} because $w(C(E))\leq
  \tau<\tau^+$. According to the proof of \cite[Example
  4.2]{gutev:00a}, this implies that $\mathscr{C}[\Psi]$ is
  $H(d)$-l.s.f. Therefore, by Proposition
  \ref{proposition-choban-sel-v13:3},
  $\Psi=\mathscr{C}_\mathbf{c}[\psi,\Phi]$ is $d=H(\rho)$-s.l.s.f.
\end{proof}

Finally, we also need the following property of $H(\rho)$-s.l.s.f.\
mappings defined on collectionwise normal spaces. 

\begin{proposition}
  \label{proposition-choban-sel-v5:2}
  Let $X$ be $\tau$-collectionwise normal, $(E,d)$ be a metric
  space and $\Psi:X\to 2^E$ be an l.s.c.\ mapping. Suppose that
  $F\subseteq X$ is a closed set, $\varepsilon>0$, $\mathscr{V}$ is a
  point-finite open in $F$ cover of $F$ with $|\mathscr{V}|\leq\tau$,
  and $K_V\in \mathscr{C}(E)$, $V\in\mathscr{V}$, are compact sets
  such that $K_V\subseteq \mathbf{O}_\varepsilon^d(\Psi(x))$, for
  every $x\in V$. Then there exists a locally finite open in $X$ cover
  $\left\{U_V: V\in\mathscr{V}\right\}$ of $F$ such that
  \[
    U_V\cap F\subseteq V\quad\text{and}\quad K_V\subseteq
    \mathbf{O}_\varepsilon^d(\Psi(x)),\quad \text{for every
      $x\in U_V$ and $V\in\mathscr{V}$.}
  \]
\end{proposition}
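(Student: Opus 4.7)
The plan is to refine $\mathscr{V}$ inside $F$ and then extend the refinement back to $X$ via $\tau$-collectionwise normality, controlling the $\varepsilon$-neighborhoods through the open sets
\[
  W_V = \{x \in X : K_V \subseteq \mathbf{O}_\varepsilon^d(\Psi(x))\}.
\]
First I would verify that each $W_V$ is open in $X$: at $x_0 \in W_V$, the compactness of $K_V$ provides finitely many $p_1,\dots,p_n \in \Psi(x_0)$ and $\eta_i > 0$ with $K_V \subseteq \bigcup_i \mathbf{O}_{\eta_i}^d(p_i)$ and $\mathbf{O}_{2\eta_i}^d(p_i) \subseteq \mathbf{O}_\varepsilon^d(\Psi(x_0))$; then the l.s.c.\ of $\Psi$ makes $\bigcap_i \Psi^{-1}(\mathbf{O}_{\eta_i}^d(p_i))$ an open neighborhood of $x_0$ contained in $W_V$. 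Since $V \subseteq W_V$ and $V$ is open in $F$, I can fix an open $G_V \subseteq W_V$ in $X$ with $G_V \cap F = V$ (take $G_V = H \cap W_V$ where $H$ is any open set in $X$ with $H \cap F = V$).

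Next I would refine $\mathscr{V}$ within $F$. Being closed in a $\tau$-collectionwise normal space, $F$ is itself $\tau$-collectionwise normal and hence $\tau$-PF-normal (by the Michael--Kand\^o--Nedev observation the paper has already recorded). The point-finite open cover $\mathscr{V}$ of $F$, of cardinality at most $\tau$, is therefore a normal cover of $F$, so it admits a locally finite open refinement in $F$; reindexing over $\mathscr{V}$ (assigning each member of the refinement to some $V \in \mathscr{V}$ containing it and taking unions) produces a locally finite open cover $\{V^* : V \in \mathscr{V}\}$ of $F$ with $V^* \subseteq V$. Normality of $F$ then gives a closed-in-$F$ shrinking $\{A_V\}$ with $A_V \subseteq V^*$ and $\bigcup_V A_V = F$; this family is locally finite in $F$, and because each $A_V \subseteq F$ is closed in $X$, the family $\{A_V\}$ is in fact locally finite and closed in $X$ as well.

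The decisive step is then to swell $\{A_V\}$ to a locally finite open family $\{U_V\}$ in $X$ with $A_V \subseteq U_V \subseteq G_V$; the requirements $U_V \cap F \subseteq V$ and $K_V \subseteq \mathbf{O}_\varepsilon^d(\Psi(x))$ for $x \in U_V$ will then follow from $U_V \subseteq G_V \subseteq W_V$, while $\bigcup_V U_V \supseteq \bigcup_V A_V = F$ supplies the covering. I expect this swelling to be the main obstacle: performing it for arbitrary locally finite closed families with prescribed open containments is exactly the content of $\tau$-expandability, which is strictly stronger than $\tau$-collectionwise normality. The way around this is to exploit the extra structure that every $A_V$ lies inside the closed set $F$. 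Concretely, I would first separate $A_V$ from the closed set $F \setminus V$ using normality of $X$ to obtain an open $O_V \supseteq A_V$ with $O_V \cap F \subseteq V$ and replace $G_V$ by $G_V \cap O_V$, and then invoke $\tau$-collectionwise normality of the ambient $X$ to thin the neighborhoods of the $A_V$'s inside $G_V$ into a locally finite family around $F$; it is the $\tau$-collectionwise normality of $X$, rather than of $F$ alone, that converts the $F$-local finiteness of the $A_V$'s into genuine $X$-local finiteness of the $U_V$'s.
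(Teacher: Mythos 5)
Your first two stages are sound: the openness of $W_V$ (equivalently, the existence of an open $G_V\subseteq X$ with $G_V\cap F=V$ on which $K_V\subseteq \mathbf{O}_\varepsilon^d(\Psi(x))$) is exactly what the paper extracts from Michael's Lemma 11.3 in \emph{Continuous selections II} \cite{michael:56b}, and the passage inside $F$ to a locally finite open refinement $\{V^*\}$ with a closed shrinking $\{A_V\}$ that is locally finite in $X$ is also correct. The problem is the final ``swelling'' step, which you rightly flag as the main obstacle and then do not actually perform. The family $\{A_V\}$ is locally finite but in general \emph{not discrete} --- the $A_V$ arise from shrinking a cover and may overlap --- so $\tau$-collectionwise normality cannot simply be ``invoked'' on it: by definition it only expands discrete closed families. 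Expanding an arbitrary locally finite closed family into a locally finite open family within prescribed open sets is precisely $\tau$-expandability, which (as you note) is strictly stronger and genuinely fails in $\tau$-collectionwise normal spaces that are not countably paracompact. Your closing sentence, that collectionwise normality of $X$ ``converts the $F$-local finiteness of the $A_V$'s into genuine $X$-local finiteness of the $U_V$'s,'' is a restatement of what must be proved, not an argument.

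What is missing is the mechanism that exploits point-finiteness: the multiplicity stratification of Michael--Kand\^o--Nedev, in which $F$ is written as the union of the closed sets $P_n$ of points lying in at most $n$ members of the cover, each stratum is decomposed into a discrete family indexed by the $n$-element subsets of $\mathscr{V}$ (to which collectionwise normality does apply), and the countably many stages are glued so as to preserve local finiteness. After your reductions this point-finite structure is no longer visible in $\{A_V\}$, so the argument cannot be completed from where you stand. The paper sidesteps the entire difficulty by quoting exactly this relative shrinking statement as a known result: Nedev's Lemma 1 in \cite{MR0418033} (see also Lemma 1.6 of \cite{MR644284}), asserting that in a $\tau$-collectionwise normal space an open family of cardinality $\le\tau$ covering a closed set $F$ and point-finite at each point of $F$ admits a locally finite (in $X$) open shrinking still covering $F$. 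Either cite that lemma, as the paper does, or supply the stratification argument explicitly.
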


\begin{proof}
  Take $V\in \mathscr{V}$. Since $K_V$ is compact and $\Psi$ is
  l.s.c., by \cite[Lemma 11.3]{michael:56b}, there exists an open set
  $W_V\subseteq X$ such that $W_V\cap F=V$ and
  $K_V\subseteq \mathbf{O}_\varepsilon^d(\Psi(x))$, for every
  $x\in W_V$. Thus, we get an open in $X$ and point-finite in $F$
  cover $\{W_V:V\in \mathscr{V}\}$ of $F$. Since $X$ is
  $\tau$-collectionwise normal and $|\mathscr{V}|\leq \tau$, by
  \cite[Lemma 1]{MR0418033}, see also \cite[Lemma 1.6]{MR644284},
  there exists a locally finite open in $X$ cover
  $\{U_V:V\in \mathscr{V}\}$ of $F$ with $U_V\subseteq W_V$,
  $V\in \mathscr{V}$.
\end{proof}

\begin{proof}[Proof of Theorem \ref{theorem-Icsvm-vgg-rev:1}]
  Let $X$, $(Y,\rho)$, $\Phi$ and $\psi$ be as in that theorem. Take
  $\varepsilon>0$, a closed set $F\subseteq X$ and a selection
  $\sigma:F\to \mathscr{C}(\mathscr{C}_\mathbf{c}(Y))$ for
  $\mathscr{C}_\mathbf{c}[\psi,\Phi]\uhr F$. Set $\Psi(x)=Y$,
  $x\in X$.  Then by Proposition \ref{proposition-choban-sel-v4:3},
  $\mathscr{C}_\mathbf{c}[\psi,\Psi]$ is $H(\rho)$-s.l.s.f. Hence, by
  Proposition \ref{proposition-choban-sel-v5:2} applied with
  $E=\mathscr{C}(Y)$ and $d=H(\rho)$, there exists a locally finite
  open in $X$ cover $\mathscr{V}_0$ of $F$ and a map
  $\pi_0:\mathscr{V}_0\to F$ such that $|\mathscr{V}_0|\leq \tau$ and
  \begin{equation}
    \label{eq:choban-sel-v5:1}
    \sigma(\pi_0(V))\subseteq
    \mathbf{O}_\varepsilon^{H(\rho)}(\mathscr{C}_\mathbf{c}[\psi,\Psi](x)), \quad
    \text{for every $x\in V\in \mathscr{V}_0$.}
  \end{equation}
  Next, as in the proof of Proposition
  \ref{proposition-choban-sel-v5:2}, for each $V\in\mathscr{V}$,
  define an open subset $U_V$ of $V$ by
  $U_V=\left\{x\in V: \sigma(\pi_0(V))\subseteq
    \mathbf{O}_\varepsilon^{H(\rho)}(\mathscr{C}_\mathbf{c}[\psi,\Phi](x))\right\}$. Thus,
  we get a locally finite open in $X$ family
  $\mathscr{U}_0=\{U_V:V\in \mathscr{V}_0\}$ such that
  \begin{equation}
    \label{eq:choban-sel-v5:2}
    \sigma(\pi_0(V))\subseteq
    \mathbf{O}_\varepsilon^{H(\rho)}(\mathscr{C}_\mathbf{c}[\psi,\Phi](x)), \quad
    \text{for every $x\in U_V$ and $V\in \mathscr{V}_0$.}
  \end{equation}
  Moreover, if $\Phi(x)=Y$ for some $x\in V\in \mathscr{V}_0$, then
  $\mathscr{C}_\mathbf{c}[\psi,\Phi](x)=\mathscr{C}_\mathbf{c}[\psi,\Psi](x)$
  and by \eqref{eq:choban-sel-v5:1}, $x\in U_V$. Therefore, $\Phi$ is
  compact-valued on the closed set
  $F_1=F\setminus \bigcup\mathscr{U}_0$. Hence, by Propositions
  \ref{proposition-Icsvm-vgg-rev:2} and
  \ref{proposition-choban-sel-v5:2}, there is a locally finite open in
  $X$ cover $\mathscr{U}_1$ of $F_1$ and a map
  $\pi_1:\mathscr{U}_1\to F_1$ such that $|\mathscr{U}_1|\leq \tau$
  and
  \begin{equation}
    \label{eq:choban-sel-v5:3}
    \sigma(\pi_1(U))\subseteq
    \mathbf{O}_\varepsilon^{H(\rho)}(\mathscr{C}_\mathbf{c}[\psi,\Phi](x)), \quad
    \text{for every $x\in U\in \mathscr{U}_1$.}
  \end{equation}
  We can now take $\mathscr{U}$ to be the disjoint union of
  $\mathscr{U}_0$ and $\mathscr{U}_1$, and $\pi:\mathscr{U}\to F$ to
  be defined by $\pi\uhr \mathscr{U}_i=\pi_i$, $i=0,1$. Evidently, by
  \eqref{eq:choban-sel-v5:2} and \eqref{eq:choban-sel-v5:3}, this
  implies that $\mathscr{C}_\mathbf{c}[\psi,\Phi]$ is
  $H(\rho)$-s.l.s.f.
\end{proof}

Let us explicitly remark that in Theorem
\ref{theorem-Icsvm-vgg-rev:1}, the point-images of $\Phi$ were
required to be connected and $\{\Phi(x):x\in X\}$ to be uniformly
equi-$LC^0$ only to make sure that
$\mathscr{C}_\mathbf{c}[\psi,\Phi]:X\to
\mathscr{F}_\mathbf{c}(\mathscr{C}_\mathbf{c}(Y))$. However, this
played no other role in the proof Theorem
\ref{theorem-Icsvm-vgg-rev:1}. If $Y$ is a metrizable space,
$\Phi:X\to \mathscr{F}(Y)$ and $\theta:X\to \mathscr{C}(Y)$ is a
u.s.c.\ selection for $\Phi$, then the mapping
$\mathscr{C}[\theta,\Phi]$ defined as in \eqref{eq:choban-sel-v8:1},
takes values in $ \mathscr{F}(\mathscr{C}(Y))$ and is l.s.c., see the
proof of \cite[Example 3.11]{gutev:00b}. Hence, we have the following
further result from the proof of Theorem
\ref{theorem-Icsvm-vgg-rev:1}.

\begin{theorem}
  \label{theorem-Icsvm-vgg-rev:2}
  Let $X$ be $\tau$-collectionwise normal, $(Y,\rho)$ be a metric
  space with $w(Y)\leq\tau$, and $\Phi:X\to \mathscr{C}'(Y)$ be
  l.s.c. Also, let $Z$ be a metrizable space, $g:X\to Z$ be continuous
  and $\Theta:Z\to \mathscr{C}(Y)$ be u.s.c.\ such that
  $\psi=\Theta\circ g\leqslant \Phi$.  Then
  $\mathscr{C}[\psi,\Phi]:X\to \mathscr{F}(\mathscr{C}(Y))$ is
  $H(\rho)$-s.l.s.f.\ and, in particular, also $H(\rho)$-l.s.f.
\end{theorem}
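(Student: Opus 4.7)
The plan is to replay the proof of Theorem \ref{theorem-Icsvm-vgg-rev:1} with $\mathscr{C}_\mathbf{c}[\psi,\Phi]$ replaced throughout by $\mathscr{C}[\psi,\Phi]$. The remark immediately preceding the statement pinpoints exactly where the connectedness of the point-images of $\Phi$ and the uniform equi-$LC^0$ hypothesis were used in Theorem \ref{theorem-Icsvm-vgg-rev:1}: only to guarantee that $\mathscr{C}_\mathbf{c}[\psi,\Phi]$ is $\mathscr{F}_\mathbf{c}(\mathscr{C}_\mathbf{c}(Y))$-valued. For $\mathscr{C}[\psi,\Phi]$, both the target $\mathscr{F}(\mathscr{C}(Y))$ and the lower semi-continuity come for free from the metrisability of $Y$, the l.s.c.\ property of $\Phi$, and the u.s.c.\ property of $\psi=\Theta\circ g$ (a composition of a continuous map with a u.s.c.\ one), as recorded in the proof of \cite[Example 3.11]{gutev:00b}.

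Accordingly, fixing $\varepsilon>0$, a closed set $F\subseteq X$ and a selection $\sigma:F\to\mathscr{C}(\mathscr{C}(Y))$ for $\mathscr{C}[\psi,\Phi]\uhr F$, I would first dispose of the portion of $F$ on which $\Phi$ may coincide with $Y$. Setting $\Psi(x)=Y$, the non-connected analog of Proposition \ref{proposition-choban-sel-v4:3} asserts that $\mathscr{C}[\psi,\Psi]$ is $H(\rho)$-s.l.s.f.; its proof is the same factorisation through $Z$, with Proposition \ref{proposition-Icsvm-vgg-rev:1} replaced by the observation that on the paracompact image $g(F)$ the l.s.c.\ mapping $\mathscr{C}[\Theta,\Psi]$ is $H(\rho)$-s.l.s.f.\ by \cite[Example 2.6]{gutev:98}, which needs only $w(\mathscr{C}(Y))\leq\tau$ together with paracompactness. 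Proposition \ref{proposition-choban-sel-v5:2} then yields a locally finite open cover $\mathscr{U}_0$ of $F$ in $X$ and a map $\pi_0:\mathscr{U}_0\to F$ with $|\mathscr{U}_0|\leq\tau$ realising the analog of \eqref{eq:choban-sel-v5:2} with $\mathscr{C}_\mathbf{c}[\psi,\Phi]$ replaced by $\mathscr{C}[\psi,\Phi]$.

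On the residual closed set $F_1=F\setminus\bigcup\mathscr{U}_0$, the mapping $\Phi$ is necessarily compact-valued, and here I would invoke the non-connected analog of Proposition \ref{proposition-Icsvm-vgg-rev:2}: on the $\tau$-PF-normal space $X$ (implied by $\tau$-collectionwise normality), the l.s.c.\ mapping $\mathscr{C}[\psi,\Phi]\uhr F_1:F_1\to\mathscr{F}(\mathscr{C}(Y))$ supplied by \cite[Example 3.11]{gutev:00b} becomes $H(\rho)$-s.l.s.f.\ by the same three-step chain used in Proposition \ref{proposition-Icsvm-vgg-rev:2}: the l.s.c.\ property is promoted to the next hyperspace level by \cite[Lemma 3.4$'$]{nepomnyashchii:85}, Nedev's \cite[Lemma 3.5]{MR644284} furnishes the Selection Factorisation Property, and Proposition \ref{proposition-choban-sel-v13:3} finishes. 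Proposition \ref{proposition-choban-sel-v5:2} applied once more provides $\mathscr{U}_1$ and $\pi_1$ on $F_1$. Setting $\mathscr{U}=\mathscr{U}_0\sqcup\mathscr{U}_1$ and $\pi\uhr\mathscr{U}_i=\pi_i$ ($i=0,1$) witnesses the $H(\rho)$-s.l.s.f.\ property of $\mathscr{C}[\psi,\Phi]$; the ``in particular, $H(\rho)$-l.s.f.''\ clause is immediate from the definitions.

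The main obstacle I anticipate is the non-connected version of Proposition \ref{proposition-Icsvm-vgg-rev:2}: although \cite[Lemma 3.4$'$]{nepomnyashchii:85} and \cite[Lemma 3.5]{MR644284} are formulated for l.s.c.\ mappings into hyperspaces of compacta and should apply to $\mathscr{C}(Y)$ in place of $\mathscr{C}_\mathbf{c}(Y)$ without modification, this is the one step where substituting $\mathscr{C}$ for $\mathscr{C}_\mathbf{c}$ is not inherited automatically from the proof of Theorem \ref{theorem-Icsvm-vgg-rev:1} and demands a short independent verification that these tools remain available in the absence of connectedness. Everything else is a mechanical transcription of the earlier proof.
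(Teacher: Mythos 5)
Your proposal is correct and follows essentially the same route as the paper, which derives Theorem \ref{theorem-Icsvm-vgg-rev:2} directly from the proof of Theorem \ref{theorem-Icsvm-vgg-rev:1} after observing (in the remark you cite) that connectedness and the uniformly equi-$LC^0$ hypothesis served only to place $\mathscr{C}_\mathbf{c}[\psi,\Phi]$ in $\mathscr{F}_\mathbf{c}(\mathscr{C}_\mathbf{c}(Y))$, while $\mathscr{C}[\psi,\Phi]$ is $\mathscr{F}(\mathscr{C}(Y))$-valued and l.s.c.\ by the proof of \cite[Example 3.11]{gutev:00b}. The one step you flag for independent verification --- the non-connected analogue of Proposition \ref{proposition-Icsvm-vgg-rev:2} --- does go through as you expect, since the chain through \cite[Lemma 3.4$'$]{nepomnyashchii:85}, \cite[Lemma 3.5]{MR644284} and Proposition \ref{proposition-choban-sel-v13:3} nowhere uses connectedness.
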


\section{Intermediate Selections and Collectionwise Normality}
\label{sec:interm-select-coll}

A mapping $\theta: X \to 2^Y$ has the \emph{locally finite lifting
  property} \cite[(3.3)]{gutev:00b} (see also
\cite{MR1961298,zbMATH06047968,miyazaki:01a}) if for
every locally finite family $\mathscr{F}$ of closed subsets of $Y$,
there is a locally finite family $\{ U_F: F \in \mathscr{F}\}$ of open
subsets of $X$ such that $\theta^{-1}(F) \subseteq U_F$ for each
$F \in \mathscr{F}$. The following key observation will be used in
the proofs of both Theorem \ref{theorem-choban-sel-v6:2} and Theorem
\ref{theorem-choban-sel-v6:3}.

\begin{proposition}
  \label{proposition-choban-sel-v9:1}
  Let $X$ be $\tau$-collectionwise normal, $(Y,\rho)$ be a connected
  complete metric space with $w(Y)\leq\tau$, and
  $\Phi:X\to \mathscr{C}'_\mathbf{c}(Y)$ be l.s.c.\ such that
  $\{\Phi(x): x\in X\}$ is uniformly equi-$LC^0$. Also, let
  $\theta:X\to \mathscr{C}(Y)$ be a u.s.c.\ selection for $\Phi$ which
  has the locally finite lifting property. Then there exists a
  continuous mapping $\varphi:X\to \mathscr{C}_\mathbf{c}(Y)$ with
  $\theta\leqslant \varphi\leqslant \Phi$.
\end{proposition}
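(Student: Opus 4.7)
The plan is to apply Theorem \ref{theorem-choban-sel-v17:1}, which will produce the desired continuous $\varphi$ once we verify that $\mathscr{C}_\mathbf{c}[\theta,\Phi]:X\to\mathscr{F}_\mathbf{c}(\mathscr{C}_\mathbf{c}(Y))$ is $H(\rho)$-s.l.s.f. The auxiliary hypotheses are easily checked: since $\Phi$ is connected-valued and $\{\Phi(x):x\in X\}$ is uniformly equi-$LC^0$, the remark following \eqref{eq:choban-sel:1} ensures that $\mathscr{C}_\mathbf{c}[\theta,\Phi]$ does take values in $\mathscr{F}_\mathbf{c}(\mathscr{C}_\mathbf{c}(Y))$, while completeness of $(Y,\rho)$ implies that $\mathscr{C}_\mathbf{c}(Y)$ is complete under $H(\rho)$ (being closed in $\mathscr{C}(Y)$). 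Thus the essential task is to establish the $H(\rho)$-s.l.s.f.\ property.

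The natural route is to reduce to Theorem \ref{theorem-Icsvm-vgg-rev:1}. For this, it suffices to produce a factorization $\theta=\Theta\circ g$ with $Z$ a metrizable space, $g:X\to Z$ continuous, and $\Theta:Z\to\mathscr{C}(Y)$ u.s.c.; the theorem then applies with $\psi=\theta$ to deliver precisely the required s.l.s.f.\ property. This factorization is where the locally finite lifting property of $\theta$ plays its decisive role. Since $w(Y)\leq\tau$, one fixes a $\sigma$-locally finite open base $\mathscr{B}=\bigcup_{n\in\N}\mathscr{B}_n$ of $Y$ with $|\mathscr{B}|\leq\tau$, so that for each $n$, the family $\{\overline{B}:B\in\mathscr{B}_n\}$ of closed subsets of $Y$ is locally finite. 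The locally finite lifting property then produces locally finite open families $\{U_{n,B}:B\in\mathscr{B}_n\}$ in $X$ with $\theta^{-1}(\overline{B})\subseteq U_{n,B}$. From this sequence of locally finite liftings one constructs, in the standard manner (cf.\ \cite{MR1961298,miyazaki:01a}), a metrizable quotient $g:X\to Z$ together with an induced u.s.c.\ mapping $\Theta:Z\to\mathscr{C}(Y)$ satisfying $\theta=\Theta\circ g$.

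Once this factorization is available, Theorem \ref{theorem-Icsvm-vgg-rev:1} yields that $\mathscr{C}_\mathbf{c}[\theta,\Phi]$ is $H(\rho)$-s.l.s.f., and Theorem \ref{theorem-choban-sel-v17:1} then produces the continuous mapping $\varphi:X\to\mathscr{C}_\mathbf{c}(Y)$ with $\theta\leqslant\varphi\leqslant\Phi$. The main obstacle is the factorization step: the locally finite lifting property is precisely designed to make it go through, but converting a sequence of locally finite liftings of a base of $Y$ into an honest continuous map into a metrizable space, together with verifying upper semi-continuity of the induced mapping on the quotient, is the technically delicate piece of the argument. Everything else is a clean assembly of the previously established general results.
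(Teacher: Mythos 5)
Your overall strategy --- factor $\theta$ through a metrizable space, feed the result into Theorem \ref{theorem-Icsvm-vgg-rev:1}, and finish with Theorem \ref{theorem-choban-sel-v17:1} --- is the right one, and it is the paper's. But the pivot of your argument, an \emph{exact} factorization $\theta=\Theta\circ g$ with $Z$ metrizable, $g$ continuous and $\Theta$ u.s.c., is in general false, and the locally finite lifting property does not rescue it. A concrete obstruction: take $X=[0,\omega_1]$ (compact, hence collectionwise normal), $Y=[0,1]$, $\theta(x)=\{0\}$ for $x<\omega_1$ and $\theta(\omega_1)=[0,1]$. This $\theta$ is u.s.c.\ and has the locally finite lifting property vacuously (on a compact domain only finitely many $\theta^{-1}(F)$ can be nonempty for a locally finite family $\mathscr{F}$ in $Y$, and a locally finite open family in a compact space has only finitely many nonempty members). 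Yet every continuous map $g$ from $[0,\omega_1]$ into a metrizable space is constant on a tail $[\alpha,\omega_1]$, so $\Theta(g(\omega_1))=\Theta(g(\alpha))$ would force $\theta(\omega_1)=\theta(\alpha)$, which fails. So the ``quotient'' construction you sketch from a $\sigma$-locally finite base cannot recover $\theta$ itself; it can only produce something larger.

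What the locally finite lifting property actually buys --- and what the paper invokes, via \cite[Theorem 3.1]{zbMATH06047968} --- is a \emph{sandwich} factorization: a metrizable $Z$, a continuous $g:X\to Z$ and a u.s.c.\ $\Theta:Z\to\mathscr{C}(Y)$ with $\theta\leqslant\psi=\Theta\circ g\leqslant\Phi$. Note that the upper bound $\psi\leqslant\Phi$ is part of that theorem's conclusion and is indispensable for applying Theorem \ref{theorem-Icsvm-vgg-rev:1}; your sketch does not address how to keep the factored mapping inside $\Phi$. With the sandwich version in hand, the argument is completed not for $\mathscr{C}_\mathbf{c}[\theta,\Phi]$ but for $\mathscr{C}_\mathbf{c}[\psi,\Phi]$: Theorem \ref{theorem-Icsvm-vgg-rev:1} shows the latter is $H(\rho)$-s.l.s.f., Theorem \ref{theorem-choban-sel-v17:1} produces a continuous $\varphi$ with $\psi\leqslant\varphi\leqslant\Phi$, and $\theta\leqslant\psi$ then gives $\theta\leqslant\varphi\leqslant\Phi$. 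So the fix is not merely to tighten your factorization argument; it is to replace the target of the two main theorems by the intermediate mapping $\psi$, which is exactly how the paper proceeds.
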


\begin{proof}
  Since $\theta:X\to \mathscr{C}(Y)$ has the locally finite lifting
  property, by \cite[Theorem 3.1]{zbMATH06047968}, there exists a
  metrizable space $Z$, a continuous map $g:X\to Z$ and a u.s.c.\
  mapping ${\Theta:Z\to \mathscr{C}(Y)}$ such that
  $\theta\leqslant \psi=\Theta\circ g\leqslant \Phi$. Accordingly, by
  Theorem \ref{theorem-Icsvm-vgg-rev:1}, the mapping
  $\mathscr{C}_\mathbf{c}[\psi,\Phi]:X\to
  \mathscr{F}_\mathbf{c}(\mathscr{C}_\mathbf{c}(Y))$ is
  $H(\rho)$-s.l.s.f. Thus, by Theorem \ref{theorem-choban-sel-v17:1},
  there exists a continuous mapping
  $\varphi:X\to \mathscr{C}_\mathbf{c}(Y)$ such that
  $\theta\leqslant \psi \leqslant\varphi\leqslant \Phi$.
\end{proof}

The other key element in these proofs  is
the following set-valued interpretation of the Kat\v{e}tov-Tong
insertion theorem, see Tong \cite{tong:48,MR0050265} and Kat\v{e}tov
\cite{katetov:51,MR0060211}. 

\begin{lemma}
  \label{lemma-choban-sel-v12:2}
    For a space $X$, the following are equivalent\textup{:}
  \begin{enumerate}
  \item\label{item:choban-sel-v11:2} $X$ is normal.
  \item\label{item:choban-sel-v11:3} If
    $\Phi:X\to \mathscr{C}_\mathbf{c}(\J(2))$ is l.s.c.,
    $\theta:X\to \mathscr{C}(\J(2))$ is u.s.c.\ and
    $\theta\leqslant \Phi$, then there exists a continuous mapping
    ${\varphi:X\to \mathscr{C}_\mathbf{c}(\J(2))}$ such that 
    $\theta\leqslant \varphi\leqslant \Phi$.
  \item\label{item:choban-sel-v11:4} If $\xi:X\to \R$ is upper
    semicontinuous, $\eta:X\to \R$ is lower semicontinuous and
    $\xi\leq\eta$, then there exists a continuous function $f:X\to \R$
    such that $\xi \leq f \leq \eta$.
  \end{enumerate}
\end{lemma}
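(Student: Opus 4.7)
The plan is to follow the template of Lemma \ref{lemma-choban-sel-v12:1}, but with Dowker's theorem replaced by the Kat\v{e}tov--Tong insertion theorem, crucially exploiting that $\Phi$ is now compact-valued rather than merely closed-valued. This compactness is what lets mere normality (equivalently, $\omega$-PF-normality) suffice in place of normality plus countable paracompactness.

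For \ref{item:choban-sel-v11:2}$\implies$\ref{item:choban-sel-v11:3}, given an l.s.c.\ mapping $\Phi:X\to \mathscr{C}_\mathbf{c}(\J(2))$ and a u.s.c.\ selection $\theta:X\to \mathscr{C}(\J(2))$, I would first invoke Proposition \ref{proposition-choban-sel-v17:2} to see that $\{\Phi(x):x\in X\}$ is uniformly equi-$LC^0$, then apply Proposition \ref{proposition-Icsvm-vgg-rev:2} with $\tau=\omega$ (since $w(\J(2))=\omega$ and $X$ is $\omega$-PF-normal) to conclude that $\mathscr{C}_\mathbf{c}[\theta,\Phi]$ is $H(d)$-s.l.s.f. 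Theorem \ref{theorem-choban-sel-v17:1} then delivers the desired continuous $\varphi:X\to \mathscr{C}_\mathbf{c}(\J(2))$ with $\theta\leqslant\varphi\leqslant \Phi$. Unlike in the proof of Lemma \ref{lemma-choban-sel-v12:1}, there is no preliminary step to produce an intermediate u.s.c.\ $\psi$ via Proposition \ref{proposition-choban-sel-v6:1}; this is precisely the phenomenon advertised in the remark following Lemma \ref{lemma-choban-sel-v12:1}, where the strict inequality is what forced countable paracompactness.

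For \ref{item:choban-sel-v11:3}$\implies$\ref{item:choban-sel-v11:4}, I would reuse the device from the proof of Lemma \ref{lemma-choban-sel-v12:1}: after the homeomorphism $h(t)=t/(1+|t|)$, reduce to $\xi,\eta:X\to(-1,1)$, set $\Phi_\xi(x)=[-1,\eta(x)]$ and $\theta_\xi(x)=[-1,\xi(x)]$, and verify (via \cite[Example 1.2$^*$]{MR0077107} and \cite[Proposition 5.4]{gutev:11c}) that $\Phi_\xi$ is an l.s.c.\ $\mathscr{C}_\mathbf{c}(\J(2))$-valued mapping while $\theta_\xi$ is a u.s.c.\ selection for it. Applying \ref{item:choban-sel-v11:3} gives a continuous $\varphi_\xi$, and $f(x)=\max\varphi_\xi(x)$ is then continuous and satisfies $\xi\leq f\leq \eta$. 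Here no averaging of two functions is required because only weak inequalities are demanded, which is another reflection of the fact that we have weakened $\theta<\Phi$ to $\theta\leqslant\Phi$. The implication \ref{item:choban-sel-v11:4}$\implies$\ref{item:choban-sel-v11:2} is the classical Kat\v{e}tov--Tong theorem.

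I do not anticipate any serious obstacle; every ingredient is already in place. The only point that needs a brief justification is the legitimacy of applying Proposition \ref{proposition-Icsvm-vgg-rev:2} with $\tau=\omega$: here one must use the equivalence of normality and $\omega$-PF-normality \cite{morita:48} to verify that the hypothesis of that proposition is met without any covert assumption of countable paracompactness. This small observation is exactly what makes \ref{item:choban-sel-v11:3} a genuine set-valued counterpart of Kat\v{e}tov--Tong rather than of Dowker.
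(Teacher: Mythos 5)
Your proposal is correct and follows essentially the same route as the paper: the forward implication via Proposition \ref{proposition-Icsvm-vgg-rev:2} (with $\tau=\omega$, using that normality coincides with $\omega$-PF-normality) followed by Theorem \ref{theorem-choban-sel-v17:1}, the middle implication via the mappings $x\mapsto[-1,\eta(x)]$, $x\mapsto[-1,\xi(x)]$ and $f(x)=\max\varphi(x)$, and the last implication from Kat\v{e}tov--Tong. Your observations that no preliminary u.s.c.\ step and no averaging are needed match the paper's remarks exactly.
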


\begin{proof}
  As in the proof of Lemma \ref{lemma-choban-sel-v12:1}, we identify
  $\J(2)$ with the interval $[-1,1]$. To show that
  \ref{item:choban-sel-v11:2}$\implies$\ref{item:choban-sel-v11:3},
  suppose that $X$ is normal,
  $\Phi:X\to \mathscr{C}_\mathbf{c}([-1,1])$ is l.s.c.\ and
  $\theta:X\to \mathscr{C}([-1,1])$ is a u.s.c.\ selection for
  $\Phi$. Since $\Phi$ is compact-convex-valued, by Proposition
  \ref{proposition-Icsvm-vgg-rev:2}, the associated mapping
  $\mathscr{C}_\mathbf{c}[\theta,\Phi]$ is $H(d)$-s.l.s.f. Hence, just
  like before, the required
  $\varphi:X\to \mathscr{C}_\mathbf{c}([-1,1])$ is given by Theorem
  \ref{theorem-choban-sel-v17:1}. \smallskip

  To see that
  \ref{item:choban-sel-v11:3}$\implies$\ref{item:choban-sel-v11:4},
  following the proof of Lemma \ref{lemma-choban-sel-v12:1}, take
  functions $\xi,\eta:X\to (-1,1)$ such that $\xi$ is upper
  semicontinuous, $\eta$ is lower semicontinuous and
  $\xi\leq\eta$. Next, define mappings
  $\Phi,\theta:X\to \mathscr{C}_\mathbf{c}([-1,1])$ by
  $\Phi(x)=[-1,\eta(x)]$ and $\theta(x)=[-1,\xi(x)]$, $x\in X$. Then
  $\Phi$ is l.s.c.\ and $\theta$ is a u.s.c.\ selection for
  $\Phi$. Hence, by \ref{item:choban-sel-v11:3}, there exists a
  continuous mapping $\varphi:X\to \mathscr{C}_\mathbf{c}([-1,1])$
  such that $\theta\leqslant \varphi \leqslant \Phi$. The function
  $f:X\to (-1,1)$, defined by $f(x)=\max\varphi(x)$, $x\in X$, is as
  required in \ref{item:choban-sel-v11:4}. Since the implication
  \ref{item:choban-sel-v11:4}$\implies$\ref{item:choban-sel-v11:2} is
  a part of the Kat\v{e}tov-Tong insertion theorem, the proof is
  complete.
\end{proof}

Regarding the implication
\ref{item:choban-sel-v11:2}$\implies$\ref{item:choban-sel-v11:3} of
Lemma \ref{lemma-choban-sel-v12:2}, let us remark that we used Theorem
\ref{theorem-choban-sel-v17:1} which is behind the framework of all
results of this paper. However, an alternative proof follows using the
Kat\v{e}tov-Tong insertion theorem.

\begin{proof}[Proof of Theorem \ref{theorem-choban-sel-v6:2}]
  Let $X$ be normal and $\tau$-expandable,
  ${\Phi:X\to \mathscr{C}_\mathbf{c}'(\J(\tau))}$ be l.s.c.\ and
  $\theta:X\to \mathscr{C}(\J(\tau))$ be a u.s.c.\ selection for
  $\Phi$. By \cite[Example 3.9]{gutev:00b}, $\theta$ has the
  locally finite lifting property. Moreover, by Proposition
  \ref{proposition-choban-sel-v17:2}, $\{\Phi(x):x\in X\}$ is
  uniformly equi-$LC^0$. Hence, the required intermediate mapping
  $\varphi$ is now given by Proposition
  \ref{proposition-choban-sel-v9:1}. This shows the implication
  \ref{item:choban-sel-v6:3}$\implies$\ref{item:choban-sel-v6:4} in
  Theorem \ref{theorem-choban-sel-v6:2}.\smallskip

  To see the inverse implication, assume that $X$ is as in
  \ref{item:choban-sel-v6:4} of Theorem
  \ref{theorem-choban-sel-v6:2}. Since ${\J(2)\subseteq \J(\tau)}$, it
  follows from Lemma \ref{lemma-choban-sel-v12:2} that $X$ is
  normal. Take a locally finite family $\mathscr{F}$ of closed subsets
  of $X$ with $|\mathscr{F}| \leq \tau$. By adding $X$ to
  $\mathscr{F}$, if necessary, we may assume that $\mathscr{F}$ is a
  cover of $X$. Next, define a u.s.c.\ mapping
  $\theta:X\to \mathscr{C}(\J(\mathscr{F}))$ by
  $\theta(x)=\{\langle 1,F\rangle: x\in F\ \text{and}\ F\in
  \mathscr{F}\}$, $x\in X$. Also, let $\Phi(x)=\J(\mathscr{F})$,
  $x\in X$, be the constant mapping. Evidently,
  $\Phi:X\to \mathscr{C}_\mathbf{c}'(\J(\mathscr{F}))$ is
  l.s.c. Hence, by assumption, there exists a continuous mapping
  $\varphi:X\to \mathscr{C}_\mathbf{c}(\J(\mathscr{F}))$ with
  $\theta\leqslant \varphi\leqslant \Phi$. For convenience, set
  $V_F=\mathbf{O}_{1/2}^d(\langle1,F\rangle)$, $F\in
  \mathscr{F}$. Thus, we get an open and discrete in $\J(\mathscr{F})$
  family $\{V_F:F\in \mathscr{F}\}$ with $\langle1,F\rangle\in V_F$,
  $F\in \mathscr{F}$. Moreover, by the definition of $\theta$, we also
  have that $\langle1,F\rangle\in \varphi(x)$, whenever
  $x\in F\in \mathscr{F}$. Since $\varphi$ is compact-valued and both
  l.s.c.\ and u.s.c., this implies that the sets
  $U_F=\varphi^{-1}(V_F)$, $F\in \mathscr{F}$, form a locally finite
  open family in $X$ such that $F\subseteq U_F$, $F\in
  \mathscr{F}$. Accordingly, $X$ is also $\tau$-expandable.
\end{proof}

The proof of Theorem \ref{theorem-choban-sel-v6:3} is almost identical
to that of Theorem \ref{theorem-choban-sel-v6:2}, so we will briefly
point out only the necessary changes. To this end, let us recall that the
order $\ord({\mathscr{W}})$ of a cover $\mathscr{W}$ of $X$ is the
smallest cardinal number $\varkappa$ such that
$|\{W\in \mathscr{W} : x\in W\}| \leq \varkappa$, for every $x \in X$.

\begin{proof}[Proof of Theorem \ref{theorem-choban-sel-v6:3}]
  The implication
  \ref{item:choban-sel-v6:5}$\implies$\ref{item:choban-sel-v6:6} in
  Theorem \ref{theorem-choban-sel-v6:3} is identical to that of
  Theorem \ref{theorem-choban-sel-v6:2}. The only difference is in the
  verification of the locally finite lifting property of the u.s.c.\
  selection $\theta:X\to \mathscr{C}_k(\J(\tau))$, where $k\in
  \N$. This now follows from \cite[Example 3.10]{gutev:00b} because
  $\J(\tau)$ is finite-dimensional. The inverse implication is also
  very similar to that of Theorem \ref{theorem-choban-sel-v6:2}. The
  essential difference is about normality of $X$. To see this, take
  disjoint closed sets $A,B\subset X$ and define an l.s.c.\ mapping
  ${\Phi:X\to \mathscr{C}_\mathbf{c}([-1,1])}$ by $\Phi(x)= [-1,0]$ if
  $x\in A$, $\Phi(x)= [0,1]$ if $x\in B$ and $\Phi(x)= [-1,1]$
  otherwise. Also, define a u.s.c.\ mapping
  $\theta:X\to \mathscr{C}_2([-1,1])$ by $\theta(x)=\{-1,0\}$ if
  $x\in A$, $\theta(x)= \{0,1\}$ if $x\in B$ and $\theta(x)=\{0\}$
  otherwise.  Then $\theta\leqslant \Phi$ and by
  \ref{item:choban-sel-v6:6} of Theorem \ref{theorem-choban-sel-v6:3},
  there exists a continuous mapping
  $\varphi:X\to \mathscr{C}_\mathbf{c}([-1,1])$ with
  $\theta\leqslant \varphi\leqslant \Phi$. Since $\varphi$ is
  connected-valued, this implies that $\varphi(x)=[-1,0]$ for $x\in A$
  and $\varphi(x)=[0,1]$ for $x\in B$. We may now use that $[-1,0]$
  and $[0,1]$ are two different points in the metrizable space
  $\mathscr{C}([-1,1])$. Hence, there are disjoint open sets
  $\Omega_A,\Omega_B\subseteq\mathscr{C}([-1,1])$ such that
  $[-1,0]\in \Omega_A$ and $[0,1]\in \Omega_B$. Since
  $\varphi:X\to \mathscr{C}_\mathbf{c}(Y)\subseteq \mathscr{C}(Y)$ is
  continuous as a usual map in this hyperspace $\mathscr{C}([-1,1])$,
  it follows that
  \[
    U_A=\{x\in X: \varphi(x)\in \Omega_A\}\quad \text{and}\quad
    U_B=\{x\in X: \varphi(x)\in \Omega_B\}
  \]
  are disjoint open subsets of $X$ with $A\subseteq U_A$ and
  $B\subseteq U_B$. Accordingly, $X$ is a normal space. The rest of
  the proof is identical to that of Theorem
  \ref{theorem-choban-sel-v6:2}. Namely, take a locally finite closed
  cover $\mathscr{F}$ of $X$ with $|\mathscr{F}|\leq \tau$ and
  $\ord(\mathscr{F})\leq k$ for some $k\in \N$. Next, as before,
  define a u.s.c.\ mapping $\theta:X\to \mathscr{C}(\J(\mathscr{F}))$
  by
  ${\theta(x)=\{\langle 1,F\rangle: x\in F\ \text{and}\ F\in
    \mathscr{F}\}}$, $x\in X$, and set $\Phi(x)=\J(\mathscr{F})$,
  $x\in X$. Then $\theta:X\to \mathscr{C}_k(\J(\mathscr{F}))$ because
  ${\ord(\mathscr{F})\leq k}$. Hence, by \ref{item:choban-sel-v6:6} of
  Theorem \ref{theorem-choban-sel-v6:3}, there exists a continuous
  mapping $\varphi:X\to \mathscr{C}_\mathbf{c}(\J(\mathscr{F}))$ with
  $\theta\leqslant \varphi\leqslant \Phi$. Just like in the previous
  proof, this implies that the existence of a locally finite open
  cover $\{U_F:F\in\mathscr{F}\}$ of $X$ such that $F\subseteq U_F$,
  $F\in \mathscr{F}$. According to a result of Kat\v{e}tov
  \cite{katetov:58}, $X$ is $\tau$-collectionwise normal.
\end{proof}

Regarding the difference between Theorems
\ref{theorem-choban-sel-v6:1} and \ref{theorem-choban-sel-v6:2}, let
us point out the following special case of these theorems.

\begin{corollary}
  \label{corollary-choban-sel-v10:1}
  For a space $X$, the following are equivalent\textup{:}
  \begin{enumerate}
  \item\label{item:choban-sel-v10:1} $X$ is normal and countably
    paracompact.  
  \item\label{item:choban-sel-v10:2} If
    $\Phi:X\to \mathscr{F}_\mathbf{c}(\J(\omega))$ is l.s.c.,
    $\theta:X\to \mathscr{C}(\J(\omega))$ is u.s.c.\ and
    $\theta\leqslant\Phi$, then there exists a continuous mapping
    ${\varphi:X\to \mathscr{C}_\mathbf{c}(\J(\omega))}$ such that 
    $\theta\leqslant \varphi\leqslant \Phi$.
  \end{enumerate}
\end{corollary}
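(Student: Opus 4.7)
The plan is to treat Corollary \ref{corollary-choban-sel-v10:1} as the $\tau=\omega$ specialisation of the relaxed (non-strict) insertion statement, following the template already set by Theorems \ref{theorem-choban-sel-v6:1} and \ref{theorem-choban-sel-v6:2}. For the forward direction \ref{item:choban-sel-v10:1}$\implies$\ref{item:choban-sel-v10:2}, I would assume $X$ is normal and countably paracompact (i.e.\ $\omega$-paracompact), and let $\Phi$ and $\theta$ be as in \ref{item:choban-sel-v10:2}. By Proposition \ref{proposition-choban-sel-v17:2}, the family $\{\Phi(x):x\in X\}$ is uniformly equi-$LC^0$, and since $w(\J(\omega))=\omega$, Proposition \ref{proposition-Icsvm-vgg-rev:1} applied with $\tau=\omega$ shows that $\mathscr{C}_\mathbf{c}[\theta,\Phi]$ is $H(d)$-s.l.s.f. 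Theorem \ref{theorem-choban-sel-v17:1} then produces the required continuous $\varphi:X\to\mathscr{C}_\mathbf{c}(\J(\omega))$ with $\theta\leqslant\varphi\leqslant\Phi$. Unlike in the proof of Theorem \ref{theorem-choban-sel-v6:1}, no appeal to Proposition \ref{proposition-choban-sel-v6:1} is needed here, since the strict inequality $\theta<\varphi$ is not being asserted.

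For the converse \ref{item:choban-sel-v10:2}$\implies$\ref{item:choban-sel-v10:1}, I would first obtain normality by feeding data from the subspace $\J(2)\subseteq\J(\omega)$ into \ref{item:choban-sel-v10:2}: any l.s.c.\ $\Phi:X\to\mathscr{C}_\mathbf{c}(\J(2))$ may be regarded as a mapping into $\mathscr{F}_\mathbf{c}(\J(\omega))$, and the resulting $\varphi$ is confined to $\mathscr{C}_\mathbf{c}(\J(2))$ by $\varphi\leqslant\Phi$. This is precisely condition \ref{item:choban-sel-v11:3} of Lemma \ref{lemma-choban-sel-v12:2}, so $X$ is normal. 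For countable paracompactness, I would repeat the covering-theoretic argument from the second half of the proof of Theorem \ref{theorem-choban-sel-v6:2}, applied to countable families: given a locally finite family $\mathscr{F}$ of closed subsets of $X$ with $|\mathscr{F}|\leq\omega$ (augmented by $\{X\}$ if needed to cover $X$), define the u.s.c.\ mapping $\theta(x)=\{\langle1,F\rangle:x\in F\in\mathscr{F}\}$ into $\mathscr{C}(\J(\mathscr{F}))$ and the l.s.c.\ constant $\Phi(x)=\J(\mathscr{F})\in\mathscr{F}_\mathbf{c}(\J(\omega))$. Applying \ref{item:choban-sel-v10:2} and setting $U_F=\varphi^{-1}(\mathbf{O}_{1/2}^d(\langle1,F\rangle))$ for $F\in\mathscr{F}$ yields a locally finite open expansion of $\mathscr{F}$, so $X$ is $\omega$-expandable and therefore countably paracompact by \cite[Theorem~2.5]{krajewski:71}.

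No genuine obstacle arises, since both directions are small adaptations of arguments already developed in the paper. The only subtle point worth noting is why the statement is restricted to $\tau=\omega$: the reverse argument, carried out at arbitrary $\tau$, yields only $\tau$-expandability, which for normal $X$ means countable paracompactness together with $\tau$-collectionwise normality and is in general strictly stronger than $\tau$-paracompactness. At $\tau=\omega$ this collapse is benign, because $\omega$-expandability coincides with countable paracompactness, matching the hypothesis required by Proposition \ref{proposition-Icsvm-vgg-rev:1} in the forward direction. The mismatch for larger $\tau$ is precisely the source of the open question highlighted in the introduction.
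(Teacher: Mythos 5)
Your proposal is correct and follows essentially the same route as the paper: the forward direction is verbatim the paper's argument (Proposition \ref{proposition-choban-sel-v17:2}, then Proposition \ref{proposition-Icsvm-vgg-rev:1} with $\tau=\omega$, then Theorem \ref{theorem-choban-sel-v17:1}, with no need for Proposition \ref{proposition-choban-sel-v6:1}). For the converse the paper simply cites Theorem \ref{theorem-choban-sel-v6:2} (noting that $\mathscr{C}'_\mathbf{c}(\J(\omega))\subseteq\mathscr{F}_\mathbf{c}(\J(\omega))$, so condition \ref{item:choban-sel-v10:2} implies its hypothesis with $\tau=\omega$, whence $X$ is normal and $\omega$-expandable, i.e.\ countably paracompact); you instead inline that theorem's proof, which is the same argument written out.
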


\begin{proof}
  To see that
  \ref{item:choban-sel-v10:1}$\implies$\ref{item:choban-sel-v10:2},
  let $X$ be normal and countably paracompact,
  $\Phi:X\to \mathscr{F}_\mathbf{c}(\J(\omega))$ be l.s.c.\ and
  $\theta:X\to \mathscr{C}(\J(\omega))$ be a u.s.c.\ selection for
  $\Phi$. By Proposition \ref{proposition-choban-sel-v17:2},
  $\{\Phi(x):x\in X\}$ is uniformly equi-$LC^0$. Hence, by Proposition
  \ref{proposition-Icsvm-vgg-rev:1} and Theorem
  \ref{theorem-choban-sel-v17:1}, there exists a continuous mapping
  ${\varphi:X\to \mathscr{C}_\mathbf{c}(\J(\omega))}$ with
  $\theta\leqslant \varphi\leqslant \Phi$. Since
  \ref{item:choban-sel-v10:2}$\implies$\ref{item:choban-sel-v10:1}
  follows from Theorem \ref{theorem-choban-sel-v6:2}, the proof is
  complete.
\end{proof}

Regarding the role of the family
$\mathscr{C}_\mathbf{c}'(\J(\tau))$ in Theorems
\ref{theorem-choban-sel-v6:2} and \ref{theorem-choban-sel-v6:3}, let
us point out the following characterisation of $\tau$-PF-normal spaces.

\begin{theorem}
  \label{theorem-choban-sel-v14:1}
  For an infinite cardinal number $\tau$ and a space $X$, the
  following conditions are equivalent\textup{:}
  \begin{enumerate}
  \item\label{item:choban-sel-v14:1} $X$ is 
    $\tau$-PF-normal.
  \item\label{item:choban-sel-v14:2} If
    $\Phi:X\to \mathscr{C}_\mathbf{c}(\J(\tau))$ is l.s.c.,
    $\theta:X\to \mathscr{C}(\J(\tau))$ is u.s.c.\ and
    $\theta\leqslant \Phi$, then there exists a continuous mapping
    ${\varphi:X\to \mathscr{C}_\mathbf{c}(\J(\tau))}$ such that
    $\theta\leqslant \varphi\leqslant\Phi$.
  \end{enumerate}
\end{theorem}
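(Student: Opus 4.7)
My plan is to establish the two implications separately, following the template used for Theorems~\ref{theorem-choban-sel-v6:2} and \ref{theorem-choban-sel-v6:3}, with the general Theorem~\ref{theorem-choban-sel-v17:1} as the workhorse.

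For \ref{item:choban-sel-v14:1}$\implies$\ref{item:choban-sel-v14:2}, I would argue as in Corollary~\ref{corollary-choban-sel-v10:1}. By Proposition~\ref{proposition-choban-sel-v17:2} the family $\{\Phi(x):x\in X\}\subseteq \mathscr{F}_\mathbf{c}(\J(\tau))$ is uniformly equi-$LC^0$ in $(\J(\tau),d)$. Because $\Phi$ takes values in the \emph{compact} connected-valued hyperspace $\mathscr{C}_\mathbf{c}(\J(\tau))$, because $X$ is $\tau$-PF-normal, and because $w(\J(\tau))\leq \tau$, Proposition~\ref{proposition-Icsvm-vgg-rev:2} applies directly with $Y=\J(\tau)$: the associated mapping $\mathscr{C}_\mathbf{c}[\theta,\Phi]:X\to \mathscr{F}_\mathbf{c}(\mathscr{C}_\mathbf{c}(\J(\tau)))$ is $H(d)$-s.l.s.f. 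Theorem~\ref{theorem-choban-sel-v17:1} then produces the required continuous $\varphi:X\to\mathscr{C}_\mathbf{c}(\J(\tau))$ with $\theta\leqslant\varphi\leqslant\Phi$.

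For \ref{item:choban-sel-v14:2}$\implies$\ref{item:choban-sel-v14:1}, I would first use the embedding $\mathscr{C}_\mathbf{c}(\J(2))\subseteq \mathscr{C}_\mathbf{c}(\J(\tau))$ together with Lemma~\ref{lemma-choban-sel-v12:2} to conclude that $X$ is normal. To obtain $\tau$-PF-normality, fix a point-finite open cover $\mathscr{U}$ of $X$ with $|\mathscr{U}|\leq\tau$, set $\mathscr{U}_x=\{U\in\mathscr{U}:x\in U\}$ and $\Phi(x)=\J(\mathscr{U}_x)\subseteq \J(\mathscr{U})\hookrightarrow \J(\tau)$. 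Point-finiteness of $\mathscr{U}$ makes $\mathscr{U}_x$ finite, so $\Phi$ is compact-connected-valued; it is l.s.c.\ by the argument in the proof of Theorem~\ref{theorem-choban-sel-v6:1}. Using normality, extract a closed shrinking $\{F_U:U\in\mathscr{U}\}$ of $\mathscr{U}$ and, assuming a u.s.c.\ selection $\theta$ of $\Phi$ is available with $\langle 1,U\rangle \in \theta(x)$ whenever $x\in F_U$, apply \ref{item:choban-sel-v14:2} to get a continuous $\varphi:X\to\mathscr{C}_\mathbf{c}(\J(\tau))$ with $\theta\leqslant\varphi\leqslant\Phi$. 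The sets $W_U=\varphi^{-1}\bigl((1/2,1]\times\{U\}\bigr)$ are then open, contained in $U$, and form a locally finite family (because $\{(1/2,1]\times\{U\}:U\in\mathscr{U}\}$ is discrete in $\J(\tau)$ and $\varphi$ is continuous compact-valued); they also cover $X$ since $F_U\subseteq W_U$. Together with normality this yields a locally finite cozero refinement of $\mathscr{U}$, i.e.\ normality of the cover $\mathscr{U}$.

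The main technical obstacle is the construction of the u.s.c.\ $\theta$. The naive attempt $\theta(x)=\{0\}\cup\{\langle 1,U\rangle:x\in F_U\}$ satisfies $\theta\leqslant\Phi$ and forces $\varphi$ to be nontrivial, but a direct inspection of $\theta^\#(W)$ for an open set $W$ that is a small neighborhood of the finitely many spine tops $\langle 1,U\rangle$ with $x\in F_U$ shows that this $\theta$ is u.s.c.\ precisely when the shrinking $\{F_U\}$ is closure-preserving, a property not automatic for merely point-finite closed families. I expect to circumvent this by invoking the locally finite lifting property machinery underlying Proposition~\ref{proposition-choban-sel-v9:1}: factor the intended $\theta$ as $\Theta\circ g$ through a metrizable auxiliary space $Z$ via a continuous $g:X\to Z$, so that the combinatorics of the point-finite cover are absorbed into $Z$ where the corresponding $\Theta$ is genuinely u.s.c. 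Once such $\theta$ is in hand, the extraction of the locally finite refinement from $\varphi$ proceeds exactly as in the proofs of Theorems~\ref{theorem-choban-sel-v6:2} and \ref{theorem-choban-sel-v6:3}.
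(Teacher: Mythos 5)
Your forward implication is exactly the paper's: Proposition~\ref{proposition-choban-sel-v17:2} gives the uniform equi-$LC^0$ property, Proposition~\ref{proposition-Icsvm-vgg-rev:2} gives that $\mathscr{C}_\mathbf{c}[\theta,\Phi]$ is $H(d)$-s.l.s.f., and Theorem~\ref{theorem-choban-sel-v17:1} finishes. No issues there.

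The reverse implication, however, has a genuine gap, and it is precisely the one you flagged: starting from a point-finite open cover $\mathscr{U}$ and a closed shrinking $\{F_U\}$, the mapping $\theta(x)=\{0\}\cup\{\langle 1,U\rangle: x\in F_U\}$ is not u.s.c.\ unless $\{F_U\}$ is closure-preserving, and a shrinking of a point-finite open cover of a normal space is in general only point-finite. Your proposed repair does not close this gap. The locally finite lifting property and the factorisation $\theta=\Theta\circ g$ through a metrizable $Z$ (Proposition~\ref{proposition-choban-sel-v9:1}) are tools applied to an \emph{already given} u.s.c.\ mapping in the forward direction of Theorems~\ref{theorem-choban-sel-v6:2} and \ref{theorem-choban-sel-v6:3}; they are not a device for manufacturing a u.s.c.\ $\theta$ out of a merely point-finite closed family, and no such $\theta$ with the properties you need exists in general. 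The paper sidesteps the problem by reversing the roles of the two covers: it starts from a \emph{locally finite closed} cover $\mathscr{F}$ with $|\mathscr{F}|\leq\tau$ together with a \emph{point-finite open} expansion $\{O_F: F\in\mathscr{F}\}$, $F\subseteq O_F$. Then $\theta(x)=\{\langle 1,F\rangle: x\in F\}$ is genuinely u.s.c.\ because $\mathscr{F}$ is locally finite and closed, while point-finiteness enters only through $\Phi(x)=\J(\mathscr{F}_x)$ with $\mathscr{F}_x=\{F: x\in O_F\}$ finite, so that $\Phi$ is $\mathscr{C}_\mathbf{c}$-valued as required by \ref{item:choban-sel-v14:2}. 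The continuous $\varphi$ then yields a locally finite open family $\{U_F\}$ with $F\subseteq U_F\subseteq O_F$, and $\tau$-PF-normality follows from the characterisation in \cite[Theorem 3.1]{MR1961298} rather than from a direct refinement argument on $\mathscr{U}$. You would need to restructure your reverse implication along these lines for the proof to go through.
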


\begin{proof}
  Suppose that $X$ is $\tau$-PF-normal and
  $\Phi,\theta:X\to \mathscr{C}(\J(\tau))$ are as in
  \ref{item:choban-sel-v14:2}. Since $\{\Phi(x):x\in X\}$ is uniformly
  equi-$LC^0$, by Proposition \ref{proposition-Icsvm-vgg-rev:2}, the
  associated mapping $\mathscr{C}_\mathbf{c}[\theta,\Phi]$ is
  $H(d)$-s.l.s.f.\ and we may now apply Theorem
  \ref{theorem-choban-sel-v17:1} to get the required intermediate
  mapping $\varphi:X\to \mathscr{C}_\mathbf{c}(Y)$.\smallskip
  
  The inverse implication is also almost identical to that of
  \ref{item:choban-sel-v6:4}$\implies$\ref{item:choban-sel-v6:3} in
  Theorem \ref{theorem-choban-sel-v6:2}. Namely, by Lemma
  \ref{lemma-choban-sel-v12:2}, $X$ is normal. Take a locally finite
  closed cover $\mathscr{F}$ of $X$ with $|\mathscr{F}| \leq \tau$,
  and a point-finite open cover $\{O_F:F\in \mathscr{F}\}$ such that
  $F\subseteq O_F$, $F\in \mathscr{F}$. For every $x\in X$, let
  $\mathscr{F}_x=\{F\in \mathscr{F}: x\in O_F\}$ which is a finite set
  because $\{O_F:F\in \mathscr{F}\}$ is point-finite. Hence, as in the
  proof of Theorem \ref{theorem-choban-sel-v6:1}, we may define an
  l.s.c.\ mapping $\Phi:X\to \mathscr{C}_\mathbf{c}(\J(\mathscr{F}))$
  by $\Phi(x)=\J(\mathscr{F}_x)$, $x\in X$. Next, as in the proof of
  Theorem \ref{theorem-choban-sel-v6:2}, define a u.s.c.\ selection
  $\theta:X\to \mathscr{C}(\J(\mathscr{F}))$ for $\Phi$ by
  $\theta(x)=\{\langle 1,F\rangle: x\in F\ \text{and}\ F\in
  \mathscr{F}\}$, $x\in X$. Thus, by \ref{item:choban-sel-v14:2},
  there exists a continuous mapping
  ${\varphi:X\to \mathscr{C}_\mathbf{c}(\J(\mathscr{F}))}$ with
  $\theta\leqslant \varphi\leqslant \Phi$. Finally, just like before,
  there exists a locally finite open cover $\{U_F:F\in \mathscr{F}\}$
  of $X$ such that $F\subseteq U_F\subseteq O_F$, $F\in
  \mathscr{F}$. According to \cite[Theorem 3.1]{MR1961298}, $X$ is
  $\tau$-PF-normal.
\end{proof}

In the setting of arbitrary $\mathscr{C}'(Y)$-valued l.s.c.\ mappings,
Theorems \ref{theorem-choban-sel-v6:2} and
\ref{theorem-choban-sel-v6:3} are somewhat known. Namely, it was shown
in \cite[Theorem 1.3]{miyazaki:01a} that a space $X$ is normal and
$\tau$-expandable if and only if for every completely metrizable space
$Y$ with $w(Y)\leq \tau$, every l.s.c.\ mapping
$\Phi:X\to \mathscr{C}'(Y)$ and every u.s.c.\ selection
$\theta:X\to \mathscr{C}(Y)$ for $\Phi$, there are mapping
$\varphi,\psi:X\to \mathscr{C}(Y)$ such that $\varphi$ is l.s.c.,
$\psi$ is u.s.c.\ and
$\theta\leqslant \varphi\leqslant \psi\leqslant \Phi$. Such a pair
$(\varphi,\psi)$ of mappings is often called a \emph{Michael
  pair}. So, the pair $(\theta,\Phi)$ admits an intermediate Michael
pair $(\varphi,\psi)$. Similarly, it was shown in \cite[Theorem
1.4]{miyazaki:01a} that a space $X$ is $\tau$-collectionwise normal if
and only if for every finite-dimensional completely metrizable space
$Y$ with $w(Y)\leq \tau$, every l.s.c.\ mapping
$\Phi:X\to \mathscr{C}'(Y)$ and its u.s.c.\ selection
$\theta:X\to \mathscr{C}_k(Y)$, where $k\in\N$, there exists an
intermediate Michael pair $(\varphi,\psi): X\to \mathscr{C}(Y)$ for
$(\theta,\Phi)$. The results in \cite{miyazaki:01a} were not stated in
terms of a cardinal number $\tau\geq\omega$, but the above
characterisations follow easily from the corresponding
proofs. Moreover, the characterisation in \cite[Theorem
1.4]{miyazaki:01a} was stated in the realm of normal spaces, but in
the proof of \cite[Theorem 1.3]{miyazaki:01a} was shown that the
intermediate selection property implies normality. These results can
be extended to $\tau$-PF-normal spaces as well. In fact, all these
characterisations can be obtained using the framework of this paper
and \cite[Theorem 5.1]{gutev:00a}.\medskip

Finally, let us briefly point out that Theorems
\ref{theorem-choban-sel-v6:2}, \ref{theorem-choban-sel-v6:3} and
\ref{theorem-choban-sel-v14:1} also have natural zero-dimensional
versions, see Proposition \ref{proposition-choban-sel-v8:2}. Namely,
we have the following consequences of known results. 

\begin{corollary}
  \label{corollary-choban-sel-v14:1}
  A space $X$ is $\tau$-expandable, normal and has $\dim(X)=0$ if and
  only if for every completely metrizable space $Y$ with
  $w(Y)\leq\tau$, every l.s.c.\ mapping $\Phi:X\to \mathscr{C}'(Y)$
  and every u.s.c.\ selection $\theta:X\to \mathscr{C}(Y)$ for $\Phi$,
  there exists a continuous mapping ${\varphi:X\to \mathscr{C}(Y)}$
  with $\theta\leqslant \varphi\leqslant \Phi$.
\end{corollary}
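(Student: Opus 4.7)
The plan is to prove both directions by imitating Proposition~\ref{proposition-choban-sel-v8:2}, modified to accommodate $\mathscr{C}'(Y)$-valued $\Phi$ and the non-strict relation $\theta\leqslant\Phi$. The forward implication will follow from combining the locally finite lifting property of u.s.c.\ selections with Theorem~\ref{theorem-Icsvm-vgg-rev:2} and the zero-dimensional selection result Theorem~\ref{theorem-choban-sel-v17:2}; the converse will reuse the arguments of Theorem~\ref{theorem-choban-sel-v6:2} and Lemma~\ref{lemma-choban-sel-v12:2} to extract normality and $\tau$-expandability, together with a dedicated construction for zero-dimensionality.

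For the forward direction, assume $X$ is normal, $\tau$-expandable, and $\dim(X)=0$. Since $X$ is then also $\tau$-collectionwise normal, the u.s.c.\ compact-valued $\theta$ has the locally finite lifting property by \cite[Example~3.9]{gutev:00b}, so \cite[Theorem~3.1]{zbMATH06047968} yields a metrizable space $Z$, a continuous map $g:X\to Z$, and a u.s.c.\ mapping $\Theta:Z\to\mathscr{C}(Y)$ with $\theta\leqslant\psi=\Theta\circ g\leqslant\Phi$. Theorem~\ref{theorem-Icsvm-vgg-rev:2} then tells us that $\mathscr{C}[\psi,\Phi]:X\to\mathscr{F}(\mathscr{C}(Y))$ is $H(\rho)$-l.s.f., and Theorem~\ref{theorem-choban-sel-v17:2} (where the hypothesis $\dim(X)=0$ enters) provides a continuous $\varphi:X\to\mathscr{C}(Y)$ with $\theta\leqslant\psi\leqslant\varphi\leqslant\Phi$.

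For the converse, assume condition~(b) of the corollary. Normality follows as in Lemma~\ref{lemma-choban-sel-v12:2}: given upper semicontinuous $\xi\leq\eta:X\to\R$, reduce to $\xi,\eta:X\to(-1,1)$ and apply (b) to the $\mathscr{C}_\mathbf{c}'(\J(2))$-valued $\Phi(x)=[-1,\eta(x)]$ and its u.s.c.\ selection $\theta(x)=[-1,\xi(x)]$; then $f(x)=\max\varphi(x)$ inserts a continuous function between $\xi$ and $\eta$, yielding normality via Kat\v{e}tov--Tong. For $\tau$-expandability, repeat the converse argument of Theorem~\ref{theorem-choban-sel-v6:2}: given a locally finite family $\mathscr{F}$ of closed subsets of $X$ with $|\mathscr{F}|\leq\tau$ (extended by $X$ to a cover), apply (b) to $\Phi(x)=\J(\mathscr{F})$ and the u.s.c.\ selection $\theta(x)=\{\langle 1,F\rangle:x\in F\in\mathscr{F}\}$; the preimages under the resulting continuous $\varphi$ of the discrete family of half-balls $V_F=\mathbf{O}_{1/2}^d(\langle 1,F\rangle)$ form the required locally finite open expansion of $\mathscr{F}$. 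Both arguments rely only on $\varphi$ being continuous and compact-valued, not on it taking connected values.

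To verify $\dim(X)=0$, take disjoint closed sets $F_1,F_2\subseteq X$ and use the already-established normality to pick disjoint open $U_1,U_2\subseteq X$ with $F_i\subseteq U_i$. Let $Y=\{0,1\}$ be discrete, and define $\Phi:X\to\mathscr{C}'(Y)$ by $\Phi(x)=\{i\}$ if $x\in F_i$ and $\Phi(x)=Y$ otherwise, and $\theta:X\to\mathscr{C}(Y)$ by $\theta(x)=\{i\}$ if $x\in U_i$ and $\theta(x)=Y$ if $x\notin U_1\cup U_2$. A routine check shows that $\Phi$ is l.s.c., $\theta$ is u.s.c., and $\theta\leqslant\Phi$. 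The continuous $\varphi:X\to\mathscr{C}(\{0,1\})$ supplied by (b) must then satisfy $\varphi(x)=\{i\}$ for $x\in F_i$, so the sets $V_i=\varphi^\#(\{i\})$ are disjoint clopen subsets of $X$ with $F_i\subseteq V_i$. The main obstacle is precisely this step: Proposition~\ref{proposition-choban-sel-v8:2} extracts $\dim(X)=0$ by exploiting the strict inequality $\theta<\Phi$, a device that is unavailable here, and the remedy is to first secure normality from (b) and then use it to tailor a u.s.c.\ selection into a discrete two-point codomain whose clopen structure is transferred to $X$ through the continuity of $\varphi$.
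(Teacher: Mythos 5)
Your proof is correct, and the forward direction coincides with the paper's: both factor $\theta$ through a metrizable space via the locally finite lifting property and \cite[Theorem 3.1]{zbMATH06047968}, then combine Theorem \ref{theorem-Icsvm-vgg-rev:2} with the zero-dimensional selection theorem (Theorem \ref{theorem-choban-sel-v17:2}). The converse is organised differently. The paper runs a single three-point construction: $Y=\{0,1,2\}$, $\Phi(x)=\{0,i\}$ on $F_i$ and $\Phi(x)=Y$ elsewhere, with the u.s.c.\ selection $\theta(x)=\{0,i\}$ on $F_i$ and $\theta(x)=\{0\}$ elsewhere; since $\theta=\Phi$ on each $F_i$, any intermediate $\varphi$ is forced to equal $\{0,i\}$ there, and the clopen sets $\varphi^{-1}(\{i\})\cap\varphi^{\#}(\{0,i\})$ separate $F_1$ from $F_2$ --- so normality and $\dim(X)=0$ come out in one stroke, with no prior separation of $F_1$ and $F_2$ required. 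The paper then gets $\tau$-expandability by noting that every compact-valued u.s.c.\ $\theta$ is a selection of a continuous mapping (take $\Phi\equiv Y$), hence has the locally finite lifting property, and quoting \cite[Proposition 2.2]{zbMATH06047968}. You instead extract normality first via the Kat\v{e}tov--Tong reduction (valid, since $\max\varphi(x)$ is continuous for any compact-valued continuous $\varphi$, connectedness of values being irrelevant), use that normality to manufacture the u.s.c.\ selection into the two-point space for the zero-dimensionality step, and re-run the hedgehog argument for expandability. Both routes work; the paper's is more economical because forcing $\theta=\Phi$ on the closed sets removes the need to pre-separate them. One small misattribution on your side: the locally finite lifting property of an arbitrary compact-valued u.s.c.\ mapping, as in \cite[Example 3.9]{gutev:00b}, rests on normality together with $\tau$-expandability rather than on $\tau$-collectionwise normality alone; since both hypotheses are available to you, this is harmless.
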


\begin{proof}
  If $X$ is a $\tau$-expandable normal space with $\dim(X)=0$ and $Y$
  is a metrizable space with $w(Y)\leq \tau$, then every u.s.c.\
  mapping $\theta:X\to \mathscr{C}(Y)$ has the locally lifting
  property \cite[Proposition 2.2]{zbMATH06047968}. Hence, the
  insertion property follows from \cite[Theorem 3.1]{zbMATH06047968}
  and Theorems \ref{theorem-choban-sel-v17:2} and
  \ref{theorem-Icsvm-vgg-rev:2}. Conversely, as in the proof of
  Proposition \ref{proposition-choban-sel-v8:2}, we may show that $X$
  is a normal space with $\dim(X)=0$. Namely, take disjoint closed
  sets $F_1,F_2\subseteq X$ and, for convenience, set $Y=\{0,1,2\}$.
  Next, define an l.s.c.\ mapping $\Phi:X\to 2^Y$ by $\Phi(x)=\{0,i\}$
  if $x\in F_i$, and $\Phi(x)=Y$ otherwise. In contrast to the proof
  of Proposition \ref{proposition-choban-sel-v8:2}, define a u.s.c.\
  selection $\theta:X\to 2^Y$ for $\Phi$ by $\theta(x)=\{0,i\}$ if
  $x\in F_i$, and $\theta(x)=\{0\}$ otherwise. Then by condition,
  there exists a continuous mapping $\varphi:X\to 2^Y$ such that
  $\theta\leqslant\varphi\leqslant\Phi$. In particular,
  $\varphi(x)\neq \{0\}$, $x\in F_1\cup F_2$, and we may define the
  disjoint sets $U_i=\varphi^{-1}(\{i\})\cap \varphi^{\#}(\{0,i\})$,
  $i=1,2$. Since $Y$ is discrete and $\varphi$ is continuous, $U_1$
  and $U_2$ are disjoint clopen sets with $F_i\subseteq U_i$, $i=1,2$,
  so $X$ is a normal space with $\dim(X)=0$. Finally, for each
  completely metrizable space $Y$ with $w(Y)\leq \tau$, by taking
  $\Phi(x)=Y$, $x\in X$, the selection property implies that each
  u.s.c.\ mapping $\theta:X\to \mathscr{C}(Y)$ has the locally finite
  lifting property being a selection for some continuous mapping
  $\varphi:X\to \mathscr{C}(Y)$. Applying \cite[Proposition
  2.2]{zbMATH06047968} once again, this implies that $X$ is also
  $\tau$-expandable.
\end{proof}

\begin{corollary}
  \label{corollary-choban-sel-v14:2}
  A space $X$ is $\tau$-collectionwise normal and has ${\dim(X)=0}$ if
  and only if for every finite-dimensional completely metrizable space
  $Y$ with ${w(Y)\leq\tau}$, every l.s.c.\ $\Phi:X\to \mathscr{C}'(Y)$
  and every u.s.c.\ selection $\theta:X\to \mathscr{C}_k(Y)$ for
  $\Phi$, where $k\in\N$, there exists a continuous mapping
  ${\varphi:X\to \mathscr{C}(Y)}$ with
  $\theta\leqslant \varphi\leqslant \Phi$.
\end{corollary}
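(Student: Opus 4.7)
For the forward (``only if'') direction, the plan is to reprise the strategy of Corollary \ref{corollary-choban-sel-v14:1}, with the crucial adjustment that the locally finite lifting property of $\theta$ will now be supplied by boundedness of the cardinality of its point-images rather than by $\tau$-expandability. Assuming $X$ is $\tau$-collectionwise normal with $\dim(X)=0$, $Y$ finite-dimensional completely metrizable with $w(Y)\leq\tau$, $\Phi:X\to \mathscr{C}'(Y)$ l.s.c., and $\theta:X\to \mathscr{C}_k(Y)$ a u.s.c.\ selection, I would invoke \cite[Example 3.10]{gutev:00b}---exactly as in the proof of Theorem \ref{theorem-choban-sel-v6:3}---to conclude that $\theta$ has the locally finite lifting property, since $Y$ is finite-dimensional. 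An application of \cite[Theorem 3.1]{zbMATH06047968} then yields a metrizable space $Z$, a continuous map $g:X\to Z$ and a u.s.c.\ ${\Theta:Z\to \mathscr{C}(Y)}$ such that $\theta\leqslant \psi=\Theta\circ g\leqslant \Phi$. Theorem \ref{theorem-Icsvm-vgg-rev:2} now ensures that $\mathscr{C}[\psi,\Phi]$ is $H(\rho)$-l.s.f., and since $\dim(X)=0$, Theorem \ref{theorem-choban-sel-v17:2} delivers the required continuous $\varphi:X\to \mathscr{C}(Y)$ with $\theta\leqslant\psi\leqslant\varphi\leqslant \Phi$.

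For the converse, I would recover $\dim(X)=0$ and $\tau$-collectionwise normality separately by choosing the insertion data so as to encode each property. To get $\dim(X)=0$ (which will also force normality), I would imitate the converse of Corollary \ref{corollary-choban-sel-v14:1}: for disjoint closed $F_1,F_2\subseteq X$, take $Y=\{0,1,2\}$ discrete, and define an l.s.c.\ ${\Phi:X\to \mathscr{C}'(Y)}$ by $\Phi(x)=\{0,i\}$ on $F_i$ and $\Phi(x)=Y$ elsewhere, together with a u.s.c.\ selection $\theta:X\to \mathscr{C}_2(Y)$ given by $\theta(x)=\{0,i\}$ on $F_i$ and $\theta(x)=\{0\}$ elsewhere. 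The insertion hypothesis supplies a continuous $\varphi$ with $\theta\leqslant\varphi\leqslant\Phi$, which forces $\varphi(x)=\{0,i\}$ on $F_i$; since $\mathscr{C}(Y)$ is discrete, the clopen sets $U_i=\{x:\varphi(x)=\{0,i\}\}$ are disjoint and contain $F_i$, giving $\dim(X)=0$.

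For $\tau$-collectionwise normality I would follow the converse of Theorem \ref{theorem-choban-sel-v6:3} essentially verbatim. Given a locally finite closed cover $\mathscr{F}$ of $X$ with $|\mathscr{F}|\leq\tau$ and $\ord(\mathscr{F})\leq k$ for some $k\in\N$, I would work inside the $1$-dimensional completely metrizable hedgehog $\J(\mathscr{F})$, whose weight is at most $\tau$. Setting the constant l.s.c.\ mapping $\Phi(x)=\J(\mathscr{F})$ and the u.s.c.\ selection $\theta(x)=\{\langle 1,F\rangle:x\in F\in\mathscr{F}\}$, which lies in $\mathscr{C}_k(\J(\mathscr{F}))$ because $\ord(\mathscr{F})\leq k$, the assumption yields a continuous $\varphi:X\to \mathscr{C}(\J(\mathscr{F}))$ with $\theta\leqslant\varphi\leqslant\Phi$. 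Pulling back the discrete family $V_F=\mathbf{O}_{1/2}^d(\langle 1,F\rangle)$ in $\J(\mathscr{F})$ then produces a locally finite open cover $\{U_F=\varphi^{-1}(V_F):F\in\mathscr{F}\}$ of $X$ with $F\subseteq U_F$, and Kat\v{e}tov's theorem \cite{katetov:58} concludes that $X$ is $\tau$-collectionwise normal.

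The main obstacle is conceptual rather than technical: one must recognise that the $\mathscr{C}_k$-restriction on $\theta$ together with finite-dimensionality of $Y$ is the exact substitute for $\tau$-expandability in producing the locally finite lifting property, and this is precisely the structural distinction between Corollary \ref{corollary-choban-sel-v14:2} and Corollary \ref{corollary-choban-sel-v14:1}. Once this is in place, the remaining steps are mechanical inheritance from Theorems \ref{theorem-choban-sel-v6:3}, \ref{theorem-choban-sel-v17:2} and \ref{theorem-Icsvm-vgg-rev:2}.
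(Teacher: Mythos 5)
Your proof is correct and follows essentially the same route as the paper, whose own proof simply repeats that of Corollary \ref{corollary-choban-sel-v14:1} with \cite[Proposition 2.3]{zbMATH06047968} in place of \cite[Proposition 2.2]{zbMATH06047968}: your forward direction --- locally finite lifting property of $\theta$ (you cite \cite[Example 3.10]{gutev:00b} as in Theorem \ref{theorem-choban-sel-v6:3}, the paper would cite \cite[Proposition 2.3]{zbMATH06047968}; both give the same fact), then \cite[Theorem 3.1]{zbMATH06047968}, Theorem \ref{theorem-Icsvm-vgg-rev:2} and Theorem \ref{theorem-choban-sel-v17:2} --- and your $\{0,1,2\}$-argument for normality and $\dim(X)=0$ are exactly the paper's. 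The only (harmless) deviation is in the converse for $\tau$-collectionwise normality, where you redo the explicit hedgehog construction and Kat\v{e}tov's theorem \cite{katetov:58} from the proof of Theorem \ref{theorem-choban-sel-v6:3}, whereas the paper instead observes that the insertion hypothesis with $\Phi(x)\equiv Y$ makes every u.s.c.\ $\theta:X\to\mathscr{C}_k(Y)$ a selection of a continuous mapping, hence gives it the locally finite lifting property, and then invokes \cite[Proposition 2.3]{zbMATH06047968} once more.
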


\begin{proof}
  Apply the same proof as in Corollary
  \ref{corollary-choban-sel-v14:1}, but now use \cite[Proposition
  2.3]{zbMATH06047968} instead of \cite[Proposition
  2.2]{zbMATH06047968}. 
\end{proof}

Similarly, using \cite[Proposition 2.5]{zbMATH06047968} instead of
\cite[Propositions 2.2 and 2.3]{zbMATH06047968}, we also get the
following result in the setting of $\tau$-PF-normal spaces.

\begin{corollary}
  \label{corollary-choban-sel-v14:3}
  A space $X$ is $\tau$-PF-normal and has $\dim(X)=0$ if and
  only if for every completely metrizable space $Y$ with
  $w(Y)\leq\tau$, every l.s.c.\ mapping $\Phi:X\to \mathscr{C}(Y)$
  and every u.s.c.\ selection $\theta:X\to \mathscr{C}(Y)$ for $\Phi$,
  there exists a continuous mapping ${\varphi:X\to \mathscr{C}(Y)}$
  with $\theta\leqslant \varphi\leqslant \Phi$.
\end{corollary}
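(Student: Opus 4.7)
The plan is to follow, step for step, the proof of Corollary \ref{corollary-choban-sel-v14:1}, substituting \cite[Proposition 2.5]{zbMATH06047968} for \cite[Proposition 2.2]{zbMATH06047968}; the heavy machinery---the factorisation result \cite[Theorem 3.1]{zbMATH06047968} together with Theorems \ref{theorem-Icsvm-vgg-rev:2} and \ref{theorem-choban-sel-v17:2}---is already assembled and does the bulk of the work.

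For the forward direction, I fix $X$ that is $\tau$-PF-normal with $\dim(X)=0$, a completely metrizable $Y$ with $w(Y)\leq\tau$, an l.s.c.\ $\Phi:X\to\mathscr{C}(Y)$, and a u.s.c.\ selection $\theta:X\to\mathscr{C}(Y)$ for $\Phi$. The first input, \cite[Proposition 2.5]{zbMATH06047968}, is precisely the statement that $\tau$-PF-normality of $X$ forces every such u.s.c.\ compact-valued $\theta$ to possess the locally finite lifting property. Then \cite[Theorem 3.1]{zbMATH06047968} produces a factorisation $\psi=\Theta\circ g$ through a metrizable $Z$, with $g:X\to Z$ continuous and $\Theta:Z\to\mathscr{C}(Y)$ u.s.c., so that $\theta\leqslant\psi\leqslant\Phi$. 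Theorem \ref{theorem-Icsvm-vgg-rev:2} applied to this factorisation yields that $\mathscr{C}[\psi,\Phi]:X\to\mathscr{F}(\mathscr{C}(Y))$ is $H(\rho)$-l.s.f., and Theorem \ref{theorem-choban-sel-v17:2}---whose sole hypothesis on $X$ is $\dim(X)=0$---then delivers a continuous $\varphi:X\to\mathscr{C}(Y)$ with $\theta\leqslant\varphi\leqslant\Phi$.

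The converse splits into two independent arguments, mirroring those in Corollary \ref{corollary-choban-sel-v14:1} and Theorem \ref{theorem-choban-sel-v14:1}. To extract $\dim(X)=0$, I take disjoint closed $F_1,F_2\subseteq X$ and the compact discrete three-point target $Y=\{0,1,2\}$, put $\Phi(x)=\{0,i\}$ on $F_i$ and $\Phi(x)=Y$ elsewhere, and $\theta(x)=\{0,i\}$ on $F_i$ and $\theta(x)=\{0\}$ elsewhere. Compactness of $Y$ keeps $\Phi$ inside $\mathscr{C}(Y)$, so the hypothesis supplies a continuous $\varphi$, whose preimages of points of the discrete hyperspace $\mathscr{C}(\{0,1,2\})$ are clopen and separate $F_1$ from $F_2$. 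For $\tau$-PF-normality I copy the converse of Theorem \ref{theorem-choban-sel-v14:1}: given a locally finite closed cover $\mathscr{F}$ of $X$ with $|\mathscr{F}|\leq\tau$ and a point-finite open expansion $\{O_F:F\in\mathscr{F}\}$, I set $Y=\J(\mathscr{F})$ and define $\Phi(x)=\J(\{F:x\in O_F\})$ together with $\theta(x)=\{\langle 1,F\rangle:x\in F\}$. Point-finiteness of $\{O_F\}$ makes each $\Phi(x)$ a finite bouquet of segments and so compact; $\theta$ is u.s.c.\ because $\mathscr{F}$ is locally finite, and $\theta\leqslant\Phi$ because $F\subseteq O_F$. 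The continuous $\varphi$ supplied by the hypothesis then yields a locally finite open family $U_F=\varphi^{-1}(\mathbf{O}^d_{1/2}(\langle 1,F\rangle))$ with $F\subseteq U_F\subseteq O_F$, and the characterisation \cite[Theorem 3.1]{MR1961298} identifies $X$ as $\tau$-PF-normal.

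The chief technical point is that Theorem \ref{theorem-Icsvm-vgg-rev:2} is stated under $\tau$-collectionwise normality rather than $\tau$-PF-normality; but the factorisation $\psi=\Theta\circ g$ pushes the essential step of that proof onto the paracompact metrizable $Z$, so the conclusion transports back to $X$ unchanged---this is the same manoeuvre already exploited in Corollaries \ref{corollary-choban-sel-v14:1} and \ref{corollary-choban-sel-v14:2}. Apart from this and the mild adjustment that $\Phi$ must be compact-valued (which is why the converse uses the compact three-point $Y$ and the point-finite bouquet $\J(\{F:x\in O_F\})$ in place of the constant mapping $\Phi(x)=Y$), no new ideas are required.
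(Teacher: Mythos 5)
Your overall architecture coincides with the paper's: the paper's entire proof of this corollary is the one-line instruction to repeat the proof of Corollary \ref{corollary-choban-sel-v14:1} with \cite[Proposition 2.5]{zbMATH06047968} in place of \cite[Propositions 2.2 and 2.3]{zbMATH06047968}, and your converse direction --- the three-point discrete target for $\dim(X)=0$ together with the finite-bouquet hedgehog construction borrowed from the converse of Theorem \ref{theorem-choban-sel-v14:1} --- is correct as written.

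The one place where your reasoning goes wrong is the justification of your ``chief technical point''. The factorisation $\psi=\Theta\circ g$ does \emph{not} push the compact-valued case onto $Z$: the mapping $\mathscr{C}[\psi,\Phi]$ depends on $x$ through $\Phi(x)$ as well, and $\Phi$ does not factor through $g$. The only portion of the proof of Theorems \ref{theorem-Icsvm-vgg-rev:1} and \ref{theorem-Icsvm-vgg-rev:2} that is transported to the metrizable space $Z$ is the constant part $\Phi(x)=Y$ (Proposition \ref{proposition-choban-sel-v4:3}) --- precisely the part that cannot occur here, since $Y\notin\mathscr{C}(Y)$ unless $Y$ is compact. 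The correct reason the hypothesis can be weakened from $\tau$-collectionwise normality to $\tau$-PF-normality is the opposite one: because $\Phi$ is everywhere compact-valued, the decomposition of $F$ into $\bigcup\mathscr{U}_0$ and $F_1$ and the gluing step of Proposition \ref{proposition-choban-sel-v5:2} (the genuine consumers of collectionwise normality) are not needed at all, and the required $H(\rho)$-l.s.f.\ property of $\mathscr{C}[\theta,\Phi]$ follows directly from the non-connected analogue of Proposition \ref{proposition-Icsvm-vgg-rev:2}, i.e.\ from the lower semi-continuity of $\mathscr{C}[\theta,\Phi]$ (\cite[Example 3.11]{gutev:00b}) combined with Nedev's Selection Factorisation Property argument --- which is exactly where $\tau$-PF-normality enters. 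In particular, in the forward direction you do not even need the locally finite lifting property or the factorisation of $\theta$ through $Z$: the u.s.c.\ selection $\theta$ can be fed directly into that proposition and then into Theorem \ref{theorem-choban-sel-v17:2}. With this substitution your argument closes.
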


%%%\bibliographystyle{amsplain-ab}
%%%\bibliography{gutev}

\end{document}